\theoremstyle{definition}
\newtheorem{theorem}{Theorem}[section]
\newtheorem{definition}[theorem]{Definition}
\newtheorem{lemma}[theorem]{Lemma}
\newtheorem{cor}[theorem]{Corollary}
\newtheorem{remark}[theorem]{Remark}
\newtheorem{example}[theorem]{Example}
\let\c@algorithm\relax
\let\c@figure\relax
\let\c@table\relax
\let\c@theorem\relax
\newaliascnt{algorithm}{common}
\newaliascnt{figure}{common}
\newaliascnt{table}{common}
\newaliascnt{theorem}{common}
\newcommand{\arc}[2]{\path{} (#1) edge [->,thick] node {} (#2);}
\newcommand{\arcWavy}[2]{\path{} (#1) edge [->,style={decorate, decoration=snake}] node {} (#2);}
\newcommand{\arcThick}[2]{\path{} (#1) edge [->,line width=0.75mm] node {} (#2);}
\newcommand{\arcSym}[2]{\path{} (#1) edge [<->,thick] node {} (#2);}
\newcommand{\arcDash}[2]{\path{} (#1) edge [->,dashed,thick] node {} (#2);}
\newcommand{\arcSymDash}[2]{\path{} (#1) edge [<->,dashed,thick] node {} (#2);}
\newcommand{\arcSymDot}[2]{\path{} (#1) edge [<->,dotted,thick] node {} (#2);}
\newcommand{\looparcL}[1]{\path{} (#1) edge [loop left,thick] node {} (#1);}
\def\<{\langle}
\def\>{\rangle}
\newcommand{\idOmega}{\textrm{id}_{\Omega}}
\newcommand{\set}[2]{\{#1\,:\,#2\}}
\newcommand{\N}{\mathbb{N}}
\newcommand{\n}{\{1,\ldots,n\}}
\newcommand{\labelSet}{\mathfrak{L}}
\newcommand{\labelFunc}{\textsc{Label}}
\newcommand{\exLabel}[1]{\textit{#1}}
\newcommand{\stackS}{S}
\newcommand{\stackT}{T}
\newcommand{\stackU}{U}
\newcommand{\stackV}{V}
\def\EVLA{\textsf{EVLA}}
\newcommand{\nauty}{\textsc{nauty}}
\newcommand{\bliss}{\textsc{bliss}}
\newcommand{\approxFunc}{\textsc{Approx}}
\newcommand{\approxCanonical}{\textsc{Approx}_{\textsc{C}}}
\newcommand{\approxStrong}{\textsc{Approx}_{\textsc{S}}}
\newcommand{\approxWeak}{\textsc{Approx}_{\textsc{W}}}
\newcommand{\splitFunc}{\textsc{Split}}
\newcommand{\fixedFunc}{\Call{Fixed}{}}
\newcommand{\fixedCanonical}{\Call{Fixed}{}_{\textsc{C}}}
\newcommand{\fixedStrong}{\Call{Fixed}{}_{\textsc{S}}}
\newcommand{\fixedWeak}{\Call{Fixed}{}_{\textsc{W}}}
\newcommand{\canonFunc}{\textsc{Canon}}
\newcommand{\equitableFunc}{\textsc{Equitable}}
\newcommand{\Sn}[1]{\mathcal{S}_{#1}}
\newcommand\Approx[2]{\ifthenelse{\equal{#2}{}}
                         {\textsc{Approx} (#1)}
                         {\Call{Approx}{#1,#2}}}
\newcommand{\Split}[2]{\ifthenelse{\equal{#2}{}}
                        {\Call{Split}{#1}}
                        {\Call{Split}{#1,#2}}}
\newcommand{\Fixed}[1]{\Call{Fixed}{#1}}
\newcommand{\Canon}[1]{\Call{Canon}{#1}}
\newcommand{\Equitable}[1]{\Call{Equitable}{#1}}
\newcommand{\Sym}[1]{\operatorname{Sym}\!\left(#1\right)}
\newcommand{\Auto}[1]{\operatorname{Aut}\!\left(#1\right)}
\newcommand{\Iso}[2]{\ifthenelse{\equal{#2}{}}
                      {\operatorname{Iso}\!\left(#1\right)}
                      {\operatorname{Iso}\!\left(#1,#2\right)}}
\newcommand{\Stacks}[1]{\operatorname{\textsc{DigraphStacks}}\!\left(#1\right)}
\newcommand{\Squash}[1]{\Call{Squash}{#1}}
\newcommand{\LabelledDigraphs}[2]{\operatorname{\textsc{LabelledDigraphs}}
                                  \!\left(#1,#2\right)}
\newcommand{\EmptyStack}[1]{\operatorname{\textsc{EmptyStack}}\!\left(#1\right)}
\algnewcommand\algorithmicswitch{\textbf{switch}}
\algnewcommand\algorithmiccase{\textbf{case}}
\begin{document}

\begin{frontmatter}

\title{Permutation group algorithms based on directed graphs}

\author{Christopher Jefferson}
\ead{caj21@st-andrews.ac.uk}
\ead[url]{https://caj.host.cs.st-andrews.ac.uk}

\author{Markus Pfeiffer}
\ead{markus.pfeiffer@st-andrews.ac.uk}
\ead[url]{https://www.morphism.de/~markusp}

\author{Wilf A.\ Wilson}
\address{University of St~Andrews\\School of Computer Science\\North Haugh\\St Andrews\\KY16 9SX\\Scotland}
\ead{waw7@st-andrews.ac.uk}
\ead[url]{https://wilf.me}

\author{Rebecca Waldecker}
\address{Martin-Luther-Universit\"at Halle-Wittenberg\\Institut f\"ur Mathematik\\06099 Halle\\Germany}
\ead{rebecca.waldecker@mathematik.uni-halle.de}
\ead[url]{https://www2.mathematik.uni-halle.de/waldecker/index-english.html}

\journal{Journal of Algebra}

\begin{abstract}
  We introduce a new framework for solving an important class of computational
  problems involving finite permutation groups, which includes calculating set
  stabilisers, intersections of subgroups, and isomorphisms of combinatorial
  structures.
  Our techniques are inspired by and generalise `partition backtrack', which is the
  current state-of-the-art algorithm introduced by Jeffrey Leon in 1991.
  But, instead of ordered partitions, we use labelled
  directed graphs to organise our backtrack search algorithms,
  which allows for a richer representation of many problems
  while often resulting in smaller search spaces.
  In this article we present the theory underpinning our framework, we describe
  our algorithms, and we show the results of some experiments.
  An implementation of our algorithms is available as free software in the
  \textsc{GraphBacktracking} package for \textsc{GAP}.
\end{abstract}

\end{frontmatter}

\section{Introduction}\label{sec-intro}

Many of the most important problems in computational permutation group theory
can be phrased as search problems, where we typically search for the
intersection of subsets of a symmetric group.
Standard problems that match this description are
the computation of point stabilisers or set stabilisers, of transporter sets, or
of normalisers or centralisers of subgroups.
Searching for automorphisms and isomorphisms of a wide range of combinatorial structures can be done in this way as well as deciding whether or not
combinatorial objects are in the same orbit under some group action, as is the case
for
element and subgroup conjugacy.

For many of these problems, the best known way to solve them is based on
Leon's \emph{partition backtrack algorithm} (see~\cite{leon1991}), which often performs excellently, but has exponential worst-case complexity.
Leon's algorithm conducts a backtrack search through the
elements of the symmetric group, which it organises around a collection of
ordered partitions (see~\cite{directorscut} for more details).
By encoding information about the given problem into those partitions, it is possible to cleverly prune (i.e., omit superfluous parts of) the
search space.
There have already been some extensions and improvements, inspired by graph-based ideas of {M}c{K}ay
(see for example~\cite{theissen} and~\cite{newrefiners}). This leads us to believe that
more powerful pruning, and ultimately better
performance, could be obtained by using graphs directly,
at the expense of the increased computation required at
each node of the remaining search.
In the present paper, we therefore place labelled directed graphs at the heart of backtrack search algorithms. 

The basic idea is parallel to that described above for Leon's algorithm:
When we search for an intersection of subsets of the symmetric group, we find suitable labelled digraph stacks such that the intersection
can be viewed as the set of isomorphisms (induced from the symmetric group) from the first labelled digraph stack to the second (see Section~\ref{sec-stacks}).
We encode information about the subsets into the labelled digraph stacks with
refiners (see Section~\ref{sec-refiners}),
and we just remark here that this generalises partition backtrack,
because partition backtrack can be viewed as using vertex-labelled digraphs without arcs,
where the vertex labels are in one-to-one correspondence with the cells of the ordered
partitions.
Approximators capture the fact that we typically overestimate the set of isomorphisms rather than calculate it exactly (see Section~\ref{sec-approximations}).
The last ingredient comes into play when our approximation indicates that the refiners have encoded as much information as possible in a given moment and cannot restrict the search space further.
Then we divide the search into smaller areas by defining new labelled digraph stacks, which is known as splitting (see Section~\ref{sec-splitter}). 
We discuss algorithms based on the method just described and prove their correctness in
Sections~\ref{sec-search} and~\ref{sec-r-base},
and in Section~\ref{sec-experiments}  we give details
of various experiments that compare our algorithms with the
current state-of-the-art techniques.
We conclude, in Section~\ref{sec-end}, with brief comments on the results of
this paper and the directions that they suggest for further investigation.

Finally, we would like to mention that we expect the reader to be familiar with basic concepts of
permutation group theory and graph theory and that we only briefly explain our notation before moving to the main content. 
There is an extended version of this article~\cite{directorscut},
where we give proofs that are omitted here, along with
much more detail and background information. We also include additional examples there and new material that is currently in preparation for a separate publication.

\subsection*{Acknowledgements}

The authors would like to thank the DFG (\textbf{Grant no.~WA 3089/6-1}) and the
Volks\-wagenstiftung (\textbf{Grant no.~93764}) for financially supporting this work and projects leading up to it.
The first and third authors are supported by the Royal Society
(\textbf{Grant codes RGF\textbackslash EA\textbackslash 181005
                 and URF\textbackslash R\textbackslash 180015}).
Special thanks go to Paula H\"ahndel and Ruth Hoffmann for frequent
discussions on topics related to this work, and for suggestions on how to improve this paper.
Finally, the authors thank the referees for reading our drafts carefully and for making many helpful suggestions.

\section{Preliminaries}\label{sec-defs-prelims}

Throughout this paper, $\Omega$ denotes some finite totally-ordered set on which
we define all of our groups, digraphs, and related objects.  For example, every
group in this paper is a finite permutation group on $\Omega$, i.e., a subgroup
of $\Sym{\Omega}$, the symmetric group on $\Omega$.
We follow the standard
group-theoretic notation and terminology from the literature, such as that used
in~\cite{dixonmortimer}, and write $\cdot$ for the composition of maps in
$\Sym{\Omega}$, or we omit a symbol for this binary operation altogether. We
write $\N$ for the set $\{1, 2, 3, \ldots\}$ of all natural numbers, and $\N_{0}
\coloneqq \N \cup \{0\}$.
If $n \in \N$, then $\Sn{n} \coloneqq \Sym{\{1,\ldots,n\}}$.
For many types of objects that we define on $\Omega$, for example lists, sets, or graphs, we give a way of applying
elements of $\Sym{\Omega}$ to them (denoted by exponentiation) in a structure-preserving way.

Let $\Gamma$ and $\Delta$ be digraphs (which is short for directed graphs) 
with vertex set $\Omega$.
Then we say that a permutation $g \in \Sym{\Omega}$ \emph{induces an
isomorphism from $\Gamma$ to $\Delta$} if and only if it defines a
structure-preserving map from $\Gamma$ to $\Delta$, in which case we write
$\Gamma^{g} = \Delta$.

We use the notation $\Iso{\Gamma}{\Delta}$ for the set of
isomorphisms from $\Gamma$ to $\Delta$ that are induced by elements of
$\Sym{\Omega}$.  If $\Iso{\Gamma}{\Delta}$ is non-empty, then we
call $\Gamma$ and $\Delta$ \emph{isomorphic}.
Similarly, we write $\Auto{\Gamma}
\coloneqq \Iso{\Gamma}{\Gamma}$
for the subgroup of $\Sym{\Omega}$ consisting of all
elements that induce \emph{automorphisms} of $\Gamma$.

\subsection{Labelled digraphs}

Our techniques for searching in $\Sym{\Omega}$ are built around digraphs in
which each vertex and arc (i.e.~directed edge) is given a \textit{label} from a set of labels
$\labelSet$. We
define a \emph{vertex- and arc-labelled digraph}, or \emph{labelled digraph} for
short, to be a triple $(\Omega, A, \labelFunc)$, where $(\Omega, A)$ is a
digraph and $\labelFunc$ is a function from $\Omega \cup A$ to $\labelSet$.
More precisely, for any $\delta \in \Omega$ and $(\alpha, \beta) \in A$, the
label of the vertex $\delta$ is $\labelFunc(\delta) \in \labelSet$, and the
label of the arc $(\alpha, \beta)$ is $\labelFunc(\alpha, \beta) \in
  \labelSet$.  We call such a function a \emph{labelling function}.  We point out
that our notion of digraphs is very general and that loops are allowed.
(For more details and some examples see~\cite{directorscut}.) 

We fix
$\labelSet$ as some non-empty set that contains every label that we require and
serves as the codomain of every labelling function.  
For the equitable vertex labelling algorithm discussed in Section~\ref{sec-equitable}, we require
some arbitrary but fixed total ordering on $\labelSet$.

The symmetric group on $\Omega$ acts on the sets of graphs and digraphs with
vertex
set $\Omega$, respectively, and on their labelled variants, in a natural way.
We give more details about this for labelled digraphs; the forthcoming notions
are defined analogously for the other kinds of graphs and digraphs that we have
mentioned.  Let $\LabelledDigraphs{\Omega}{\labelSet}$ denote the class of
labelled digraphs on $\Omega$ with labels in $\labelSet$, and let
$\Gamma = (\Omega, A, \labelFunc) \in \LabelledDigraphs{\Omega}{\labelSet}$
and $g \in \Sym{\Omega}$.  Then we define
$\Gamma^{g} = (\Omega, A^{g}, \labelFunc^{g}) \in
\LabelledDigraphs{\Omega}{\labelSet}$, where:
\begin{enumerate}[label=\textrm{(\roman*)}]
  \item
    $A^{g} \coloneqq \set{(\alpha^{g}, \beta^{g})}{(\alpha, \beta) \in A}$,

  \item
    $\labelFunc^{g}(\delta) \coloneqq
          \labelFunc(\delta^{g^{-1}})$ for all $\delta \in \Omega$, and

  \item
    $\labelFunc^{g}(\alpha, \beta) \coloneqq
     \labelFunc(\alpha^{g^{-1}}, \beta^{g^{-1}})$
    for all $(\alpha, \beta) \in A^{g}$.
\end{enumerate}
In other words, the arcs are mapped according to $g$, and the label of a vertex
or arc in $\Gamma^{g}$ is the label of its preimage in $\Gamma$.
This gives rise to a group action of $\Sym{\Omega}$ on
$\LabelledDigraphs{\Omega}{\labelSet}$.

\section{Stacks of labelled digraphs}\label{sec-stacks}

In this section we introduce labelled digraph stacks, with the rough idea in mind that
we use them to approximate the set of permutations we search for.
More precisely, 
we attempt to choose suitable labelled digraph stacks in such a way that the set of isomorphisms
from one to the other approximates the set we search for as closely as possible.

A \emph{labelled digraph stack} on $\Omega$ is a finite (possibly empty) list of
labelled digraphs on $\Omega$.  We denote the collection of all labelled digraph
stacks on $\Omega$ by $\Stacks{\Omega}$.  The \emph{length} of a labelled
digraph stack $\stackS$, written as $|\stackS|$, is the number of entries that it
contains.  A labelled digraph stack of length $0$ is called \emph{empty}, and
is denoted by $\EmptyStack{\Omega}$.
We use notation typical for lists, whereby if $i \in \{1, \ldots,
|\stackS|\}$, then $\stackS[i]$ denotes the $i^{\text{th}}$ labelled digraph in
the stack $\stackS$.

We allow any labelled digraph stack on $\Omega$ to be appended
onto the end of another.  If $\stackS, \stackT \in
\Stacks{\Omega}$ have lengths $k$ and $l$, respectively, then we define $\stackS
\Vert \stackT$ to be the labelled digraph stack
$\left[
    \stackS[1], \ldots, \stackS[k],
    \stackT[1], \ldots, \stackT[l]
    \right]$
of length $k + l$.

We define an action of $\Sym{\Omega}$ on $\Stacks{\Omega}$  via the action of
$\Sym{\Omega}$ on the set of all labelled digraphs on $\Omega$.  More
specifically, for all $\stackS \in \Stacks{\Omega}$ and $g \in \Sym{\Omega}$, we
define $\stackS^{g}$ to be the labelled digraph stack of length $|\stackS|$ with
$\stackS^{g}[i] = {\stackS[i]}^{g}$ for all $i \in \{1, \ldots, |\stackS|\}$.
An \emph{isomorphism} from $\stackS$
to another labelled digraph stack $\stackT$ (induced by $\Sym{\Omega}$) is
therefore a permutation $g \in \Sym{\Omega}$ such that $\stackS^{g} = \stackT$.
In particular, only digraph stacks of equal lengths can be isomorphic.
We note that every permutation in
$\Sym{\Omega}$ induces an automorphism of $\EmptyStack{\Omega}$.
As we do with digraphs, we use the notation
$\Iso{\stackS}{\stackT}$  for the set of isomorphisms
from the stack $\stackS$ to the stack $\stackT$ induced by elements of
$\Sym{\Omega}$, and $\Auto{\stackS}$ for the group of automorphisms
of $\stackS$ induced by elements of $\Sym{\Omega}$.

\begin{remark}\label{rmk-stack-iso-auto}
Let $\stackS, \stackT, \stackU, \stackV \in
  \Stacks{\Omega}$. It follows from the definitions that
\[
  \Iso{\stackS}{\stackT} =
  \begin{cases}
    \varnothing
     & \text{if\ } |\stackS| \neq |\stackT|, \\
    \bigcap_{i = 1}^{|\stackS|} \Iso{\stackS[i]}{\stackT[i]}
     & \text{if\ } |\stackS| = |\stackT|,
  \end{cases}
  \quad\text{and that\ }
  \Auto{\stackS} = \bigcap_{i = 1}^{|\stackS|}
  \Auto{\stackS[i]}.
\]
In addition $\Auto{\stackS \mathop{\Vert} \stackU} \leq \Auto{\stackS}$,
and if $|\stackS| = |\stackT|$, then
$\Iso{\stackS \mathop{\Vert} \stackU}{\stackT \mathop{\Vert} \stackV}
  \subseteq
  \Iso{\stackS}{\stackT}$.
Roughly speaking, the automorphism group of a labelled digraph stack, and the
set of isomorphisms from one labelled digraph stack to another one of equal
length, become potentially smaller as new entries are added to the stacks.
\end{remark}

We illustrate some of the foregoing concepts in Example~\ref{ex-stack}.
Since we use an orbital graph, we briefly recall (see for example~\cite[Section~3.2]{dixonmortimer}):

\begin{definition}[Orbital graph]
  Let $G \leq \Sym{\Omega}$, and let $\alpha, \beta \in \Omega$ be such that
  $\alpha \neq \beta$.  Then the \emph{orbital graph of $G$ with base-pair
    $(\alpha, \beta)$} is the digraph
  $\left(\Omega,\,\set{(\alpha^{g}, \beta^{g})}{g \in G}\right)$.
\end{definition}

\begin{example}\label{ex-stack}
  Let $\Omega = \{1, \ldots, 6\}$.  Here we define a
  labelled digraph stack $\stackS$ on $\Omega$ that has length $3$, by
  describing each of its members.

  We define the first entry of $\stackS$ via the orbital graph of $K \coloneqq
  \<(1\,2)(3\,4)(5\,6), (2\,4\,6) \>$ with base-pair $(1, 3)$.
  The automorphism
  group of this orbital graph (as always, induced by $\Sym{\Omega}$) is $K$
  itself; in other words, this orbital graph perfectly represents $K$ via its
  automorphism group.
  In order to define $\stackS[1]$, we convert this orbital graph into a labelled
  digraph by assigning the label \exLabel{white} to each vertex and
  assigning the label \exLabel{solid} to each arc.
  This does not change the automorphism group of the digraph.

  We define the second entry of $\stackS$ to be the labelled digraph on $\Omega$
  without arcs, whose vertices $1$ and $2$ are labelled \exLabel{black}, and
  whose remaining vertices are labelled \exLabel{white}.  The automorphism group
  of this labelled digraph is the setwise stabiliser of $\{1, 2\}$ in
  $\Sym{\Omega}$.

  We define the third entry of $\stackS$ to be the labelled digraph $\stackS[3]$
  shown in Figure~\ref{fig-ex-stack}, with arcs and labels chosen from the
  set
  $\{\exLabel{black},\, \exLabel{white},\, \exLabel{solid},\, \exLabel{dashed}\,\}$
  as depicted there;
  its automorphism group is $\< (1\,2), (3\,4)(5\,6) \>$.

  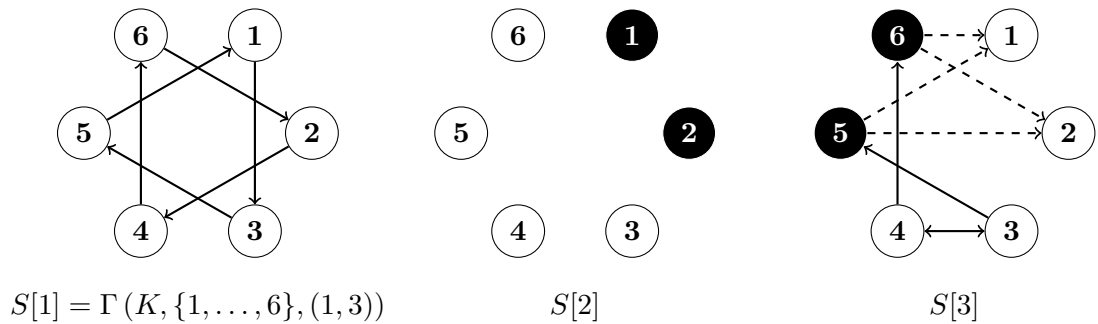
\begin{figure}[!ht]
    \centering
    \begin{tikzpicture}
      \foreach \x in {1,2,3,4,5,6} {
          \tikzstyle{white}=[circle, draw=black]
          \node[white] (\x) at (-\x*60+120:1.5cm) {$\mathbf{\x}$};};
      \foreach \x/\y in {1/3,2/4,3/5,4/6,5/1,6/2} {\arc{\x}{\y}};
      \node at (0, -2.3)
      {$\stackS[1] = \Gamma\left(K, \{1, \ldots, 6\}, (1, 3)\right)$};
    \end{tikzpicture}
    \quad
    \begin{tikzpicture}
      \tikzstyle{white}=[circle, draw=black]
      \tikzstyle{black}=[circle, draw=white, fill=black!100]
      \foreach \x in {1,2} {
          \node[black] (\x) at (-\x*60+120:1.5cm) {$\color{white}\mathbf{\x}$};};
      \foreach \x in {3,4,5,6} {
          \node[white] (\x) at (-\x*60+120:1.5cm) {$\mathbf{\x}$};};
      \node at (0, -2.3) {$\stackS[2]$};
    \end{tikzpicture}
    \qquad\quad
    \begin{tikzpicture}
      \tikzstyle{white}=[circle, draw=black]
      \tikzstyle{black}=[circle, draw=white, fill=black!100]
      \foreach \x in {1,2,3,4} {
          \node[white] (\x) at (-\x*60+120:1.5cm) {$\mathbf{\x}$};};
      \foreach \x in {5,6} {
          \node[black] (\x) at (-\x*60+120:1.5cm) {$\color{white}\mathbf{\x}$};};
      \arcDash{5}{1}
      \arcDash{6}{1}
      \arcDash{5}{2}
      \arcDash{6}{2}
      \arc{3}{5}
      \arc{4}{6}
      \arcSym{3}{4}
      \node at (0, -2.3) {$\stackS[3]$};
    \end{tikzpicture}
    \caption{\label{fig-ex-stack}Diagrams of the labelled digraphs in the
      labelled digraph stack $\stackS$ from Example~\ref{ex-stack}. The
      vertices and arcs of these labelled digraphs are styled according to their
      labels, which are chosen from the set $\{\exLabel{black},\,\exLabel{white},\,
        \exLabel{solid},\, \exLabel{dashed}\,\}$.}
  \end{figure}
  Given the automorphism groups of the individual entries of $\stackS$, as
  described above, it follows that the automorphism group of $\stackS$  consists
  of precisely those elements of $K$ that stabilise the set $\{1, 2\}$, and that
  are automorphisms of the labelled digraph $\stackS[3]$. Hence this group is
  $\< (1\,2)(3\,4)(5\,6) \>$.
  Since $(1\,2)$ is an automorphism of $\stackS[2]$ and $\stackS[3]$, but not of
  $\stackS[1]$, it follows that $\stackS^{(1\,2)} = [{\stackS[1]}^{(1\,2)},
  \stackS[2], \stackS[3]] \neq \stackS$.
  We also note that $\Iso{\stackS}{\stackS^{(1\,2)}}$ is the right coset
  $\Auto{\stackS} \cdot (1\,2) = \{ (1\,2), (3\,4)(5\,6) \}$ of $\Auto{\stackS}$
  in $\Sym{\Omega}$.
\end{example}

\subsection{The squashed labelled digraph of a stack}\label{sec-squashed-stack}

For our exposition in Section~\ref{sec-approximations},
it is convenient to have a labelled digraph whose automorphism group is equal
to that of a given labelled digraph stack. This is analogous to the final entry
of an ordered partition stack~\cite[Section~4]{leon1991}.
This special labelled digraph is a new object defined from
the stack, but it is not part of the stack itself.

For this we fix a symbol $\#$
that is never to be used as the label of a vertex or an arc
in any labelled digraph.

\begin{definition}\label{defn-squashed}
  Let $\stackS$ be a labelled digraph stack on $\Omega$, with $\stackS[i]
  \coloneqq (\Omega, A_{i}, \labelFunc_{i})$ being some labelled digraph on
  $\Omega$ for each $i \in \{1, \ldots, |\stackS|\}$.  Then the \emph{squashed
  labelled digraph} of $\stackS$, denoted by $\Squash{\stackS}$, is the labelled
  digraph $(\Omega, A, \labelFunc)$, where
  \begin{itemize}
    \item

      $A = \bigcup_{i = 1}^{|\stackS|} A_{i}$,

    \item

      $\labelFunc(\delta) =
        [\labelFunc_{1}(\delta), \ldots,
        \labelFunc_{|\stackS|}(\delta)]$ for all $\delta \in \Omega$,
      and

    \item

        $\labelFunc(\alpha, \beta)$ is the list of length $|\stackS|$
        for all $(\alpha, \beta) \in \bigcup_{i = 1}^{|\stackS|} A_{i}$,
        where
        \[\labelFunc(\alpha, \beta)[i] =
          \begin{cases}
            \labelFunc_{i}(\alpha, \beta)
             & \text{\ if\ } (\alpha, \beta) \in A_{i}, \\
            \#
             & \text{\ if\ } (\alpha, \beta) \not\in A_{i},
          \end{cases}
          \quad\text{for all}\ i \in \{1, \ldots, |\stackS|\}.
        \]
  \end{itemize}
\end{definition}

Note that the labelling function of a squashed labelled digraph of a stack can
be used to
reconstruct all information about the stack from which it was created.  We
also point out that $\Squash{\stackS}^{g} = \Squash{\stackS^{g}}$ for all
$\stackS \in \Stacks{\Omega}$ and $g \in \Sym{\Omega}$.
Therefore the following lemma holds.

\begin{lemma}\label{lem-squash-same-iso}
  Let $\stackS, \stackT \in \Stacks{\Omega}$.  Then
  \[\Iso{\stackS}{\stackT}
  =
  \Iso{\Squash{\stackS}}{\Squash{\stackT}}.\]
\end{lemma}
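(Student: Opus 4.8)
The plan is to verify the set equality elementwise: for a fixed $g \in \Sym{\Omega}$, I will show that $g$ lies in the left-hand set if and only if it lies in the right-hand set. By the definitions of isomorphisms of stacks and of labelled digraphs, the membership $g \in \Iso{\stackS}{\stackT}$ means $\stackS^{g} = \stackT$, while $g \in \Iso{\Squash{\stackS}}{\Squash{\stackT}}$ means $\Squash{\stackS}^{g} = \Squash{\stackT}$. Using the identity $\Squash{\stackS}^{g} = \Squash{\stackS^{g}}$ noted just before the lemma, the latter condition rewrites as $\Squash{\stackS^{g}} = \Squash{\stackT}$. So everything reduces to proving the single biconditional
\[
  \stackS^{g} = \stackT \iff \Squash{\stackS^{g}} = \Squash{\stackT}.
\]

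The forward implication is immediate, since applying the well-defined map $\Squash{\cdot}$ to both sides of $\stackS^{g} = \stackT$ yields $\Squash{\stackS^{g}} = \Squash{\stackT}$. The reverse implication is the heart of the matter, and it follows at once once we know that the map $\Squash{\cdot} \colon \Stacks{\Omega} \to \LabelledDigraphs{\Omega}{\labelSet}$ is injective: taking $\stackU = \stackS^{g}$ and $\stackV = \stackT$, injectivity turns $\Squash{\stackU} = \Squash{\stackV}$ into $\stackU = \stackV$. Establishing this injectivity is therefore the one substantive step, and it amounts to the reconstruction property recorded after Definition~\ref{defn-squashed}. Concretely, given $\Squash{\stackU} = (\Omega, A, \labelFunc)$, I would recover $\stackU$ as follows: since $\Omega$ is non-empty, choosing any vertex $\delta$ the length $|\stackU|$ is read off as the length of the list $\labelFunc(\delta)$; the $i$-th vertex labelling is recovered via $\labelFunc_{i}(\delta) = \labelFunc(\delta)[i]$; and for each ordered pair $(\alpha, \beta)$ the entry $\labelFunc(\alpha, \beta)[i]$ tells us both whether $(\alpha, \beta) \in A_{i}$ (namely, precisely when this entry differs from $\#$) and, if so, its label $\labelFunc_{i}(\alpha, \beta)$.

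I would stress that the reserved symbol $\#$ is exactly what makes this reconstruction possible: entry by entry it separates the \emph{absence} of an arc in a given digraph of the stack from any genuine arc label, so that squashing loses no information about the individual entries. Because the reconstruction also recovers the length $|\stackU|$, injectivity holds uniformly, with no need to split into cases according to whether $|\stackS|$ and $|\stackT|$ agree: if the lengths differ then $\stackS^{g} \neq \stackT$ for every $g$, and correspondingly $\Squash{\stackS^{g}}$ and $\Squash{\stackT}$ carry vertex-label lists of different lengths and so cannot be equal, leaving both sets empty, in agreement with Remark~\ref{rmk-stack-iso-auto}. Chaining the biconditionals then gives $g \in \Iso{\stackS}{\stackT} \iff g \in \Iso{\Squash{\stackS}}{\Squash{\stackT}}$ for every $g$, which is the claimed equality. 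The only genuine obstacle is the injectivity of $\Squash{\cdot}$; once the reconstruction is spelled out it is routine, so I expect the proof to be short.
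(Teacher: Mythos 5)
Your proposal is correct and follows essentially the same route as the paper: the paper derives the lemma from exactly the two facts you use, namely that $\Squash{\cdot}$ commutes with the action of $\Sym{\Omega}$ and that the squashed digraph's labelling function (thanks to the reserved symbol $\#$) allows full reconstruction of the stack, i.e.\ injectivity. You have merely spelled out in detail the elementwise biconditional that the paper leaves implicit.
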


\begin{example}\label{ex-squashed}
  Let $\stackS$ be the labelled digraph stack from
  Example~\ref{ex-stack}. Since $|\stackS| = 3$, the labels
  in $\Squash{\stackS}$ are lists of length $3$. The vertex labels in
  $\Squash{\stackS}$ are:
  \begin{itemize}
    \item
          $\labelFunc(1) = \labelFunc(2) =
            [\exLabel{white}, \exLabel{black}, \exLabel{white}]$,
          shown as \exLabel{black} in Figure~\ref{fig-ex-squashed},
    \item
          $\labelFunc(3) = \labelFunc(4) =
            [\exLabel{white}, \exLabel{white}, \exLabel{white}]$,
          shown as \exLabel{white} in Figure~\ref{fig-ex-squashed},
          and
    \item
          $\labelFunc(5) = \labelFunc(6) =
            [\exLabel{white}, \exLabel{white}, \exLabel{black}]$,
          shown as \exLabel{grey} in Figure~\ref{fig-ex-squashed}.
  \end{itemize}
  There are ten arcs in $\Squash{\stackS}$, which in total have five
  different labels:
  \begin{itemize}
    \item
          $\labelFunc(1, 3) = \labelFunc(2, 4) =
            [\exLabel{solid}, \#, \#]$,
          shown as \exLabel{thin} in Figure~\ref{fig-ex-squashed},
    \item
          $\labelFunc(3, 4) = \labelFunc(4, 3) = [\#, \#,
            \exLabel{solid}]$,
          shown as \exLabel{dotted} in Figure~\ref{fig-ex-squashed},
    \item
          $\labelFunc(5, 2) = \labelFunc(6, 1) = [\#, \#,
            \exLabel{dashed}]$,
          shown as \exLabel{dashed} in Figure~\ref{fig-ex-squashed},
    \item
          $\labelFunc(3, 5) = \labelFunc(4, 6) =
            [\exLabel{solid}, \#, \exLabel{solid}]$,
          shown as \exLabel{thick} in Figure~\ref{fig-ex-squashed}, and
    \item
          $\labelFunc(5, 1) = \labelFunc(6, 2) =
            [\exLabel{solid}, \#, \exLabel{dashed}]$,
          shown as \exLabel{wavy} in Figure~\ref{fig-ex-squashed}.
  \end{itemize}

  Since automorphisms of labelled digraphs preserve the sets of vertices with
  any particular label, it is clear that $\Auto{\Squash{\stackS}} \leq \<
  (1\,2), (3\,4), (5\,6) \>$. This containment is proper, since
  $\Auto{\Squash{\stackS}} = \Auto{\stackS}$ by Lemma~\ref{lem-squash-same-iso},
  and $\Auto{\stackS} = \<(1\,2)(3\,4)(5\,6)\>$, as discussed in
  Example~\ref{ex-stack}.
  Indeed, inspection of the arc labels in $\Squash{\stackS}$ shows that any
  automorphism that interchanges the pair of points in any of $\{1, 2\}$, $\{3,
  4\}$, or $\{5, 6\}$ also interchanges the other pairs.
  \begin{figure}[!ht]
    \centering
    \begin{tikzpicture}
      \tikzstyle{white}=[circle, draw=black]
      \tikzstyle{black}=[circle, draw=white, fill=black!100]
      \tikzstyle{grey}=[circle, draw=black, fill=gray!25]
      \foreach \x in {1,2} {
          \node[black] (\x) at (-\x*60+120:1.6cm) {$\color{white}\mathbf{\x}$};};
      \foreach \x in {3,4} {
          \node[white] (\x) at (-\x*60+120:1.6cm) {$\mathbf{\x}$};};
      \foreach \x in {5,6} {
          \node[grey] (\x) at (-\x*60+120:1.6cm) {$\mathbf{\x}$};};
      \arc{1}{3}
      \arc{2}{4}
      \arcSymDot{3}{4}
      \arcDash{5}{2}
      \arcDash{6}{1}
      \arcThick{3}{5}
      \arcThick{4}{6}
      \arcWavy{5}{1}
      \arcWavy{6}{2}
    \end{tikzpicture}
    \caption{\label{fig-ex-squashed}
      A depiction of the squashed labelled digraph
      $\Squash{\stackS}$ from Example~\ref{ex-squashed}, which is
      constructed from the labelled digraph stack $\stackS$ from
      Example~\ref{ex-stack}.}
  \end{figure}
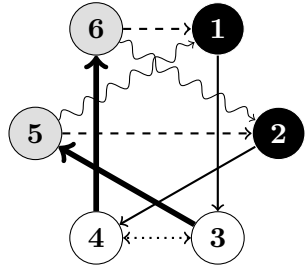
\end{example}

\section{Adding information to stacks with refiners}\label{sec-refiners}

In this section we introduce and discuss refiners for labelled digraph stacks.
We use refiners to encode information about a search problem into the stacks
around which the search is organised, in order to prune the search space.

\begin{definition}\label{defn-refiner}
  A \emph{refiner} for a set of permutations $U \subseteq \Sym{\Omega}$ is a
  pair $(f_{L}, f_{R})$ of functions from $\Stacks{\Omega}$ to itself such
  that for all isomorphic $\stackS, \stackT \in \Stacks{\Omega}$:
  \[
    U \cap \Iso{\stackS}{\stackT}
    \subseteq
    U \cap \Iso{f_{L}(\stackS)}{f_{R}(\stackT)}.
  \]
\end{definition}

Leon introduces the concept of refiners in \cite{leon1991}.
Although the word ``refiner'' might seem counter-intuitive
from the definition, Remark \ref{rmk-stack-iso-auto}
makes it clear that the stacks
$S \Vert f_{L}(\stackS)$ and $T \Vert f_{R}(\stackT)$
do indeed give rise to a closer (or ``finer'') approximation of the set we search for.

While a refiner depends on a subset of $\Sym{\Omega}$, we do not include this in
our notation in order to make it less complicated.
As a trivial example, every pair of functions from $\Stacks{\Omega}$ to itself
is a refiner for the empty set, and it is in fact relevant for practical
applications to be able to search for the empty set.  
 
In the following lemma, we formulate additional equivalent definitions of
refiners.

\begin{lemma}\label{lem-refiner-equiv-definitions}
  Let $(f_{L}, f_{R})$ be a pair of functions from $\Stacks{\Omega}$ to itself
  and let $U \subseteq \Sym{\Omega}$.  Then the following are equivalent:
  \begin{enumerate}[label=\textrm{(\roman*)}]
    \item\label{item-refiner-initial}
          $(f_{L}, f_{R})$ is a refiner for $U$.

    \item\label{item-refiner-long}
          For all isomorphic $\stackS, \stackT \in \Stacks{\Omega}$:
          \[U \cap \Iso{\stackS}{\stackT}
            =
            U \cap \Iso{\stackS \mathop{\Vert} f_{L}(\stackS)}
            {\stackT \mathop{\Vert} f_{R}(\stackT)}.\]

    \item\label{item-refiner-individual-perm}
          For $\stackS, \stackT \in \Stacks{\Omega}$ and
          \(g \in U\):
          \[\text{if } {\stackS}^{g} = \stackT,
            \text{then } {f_{L}(\stackS)}^{g} = f_{R}(\stackT).\]
  \end{enumerate}
\end{lemma}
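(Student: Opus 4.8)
The plan is to reduce all three statements to a single \emph{core inclusion} and then verify that each of \ref{item-refiner-initial}, \ref{item-refiner-long}, \ref{item-refiner-individual-perm} is equivalent to it. The core inclusion I have in mind is
\[
  U \cap \Iso{\stackS}{\stackT}
  \subseteq
  \Iso{f_{L}(\stackS)}{f_{R}(\stackT)}
  \qquad (\star)
\]
for all isomorphic $\stackS, \stackT \in \Stacks{\Omega}$. The first and easiest observation is that $(\star)$ says exactly the same thing as \ref{item-refiner-initial}: the right-hand side of \ref{item-refiner-initial} additionally intersects with $U$, but the left-hand side is already contained in $U$, so $U \cap \Iso{\stackS}{\stackT} \subseteq \Iso{f_{L}(\stackS)}{f_{R}(\stackT)}$ holds if and only if $U \cap \Iso{\stackS}{\stackT} \subseteq U \cap \Iso{f_{L}(\stackS)}{f_{R}(\stackT)}$.

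The key structural ingredient is the behaviour of $\Iso{\cdot}{\cdot}$ under concatenation. Using the intersection formula from Remark~\ref{rmk-stack-iso-auto}, I would first establish
\[
  \Iso{\stackS \mathop{\Vert} \stackU}{\stackT \mathop{\Vert} \stackV}
  =
  \Iso{\stackS}{\stackT} \cap \Iso{\stackU}{\stackV}
\]
whenever $|\stackS| = |\stackT|$: when the appended stacks also have equal length this is immediate from applying the formula $\Iso{\cdot}{\cdot} = \bigcap_{i} \Iso{\cdot[i]}{\cdot[i]}$ entrywise, while if they have unequal lengths both sides are empty so the identity persists. Specialising to $\stackU = f_{L}(\stackS)$ and $\stackV = f_{R}(\stackT)$ yields
\[
  U \cap \Iso{\stackS \mathop{\Vert} f_{L}(\stackS)}
             {\stackT \mathop{\Vert} f_{R}(\stackT)}
  =
  U \cap \Iso{\stackS}{\stackT} \cap \Iso{f_{L}(\stackS)}{f_{R}(\stackT)}.
\]

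With this in hand the remaining equivalences are short. For \ref{item-refiner-initial} $\Leftrightarrow$ \ref{item-refiner-long} I would use the elementary set identity $A = A \cap B \iff A \subseteq B$ with $A = U \cap \Iso{\stackS}{\stackT}$ and $B = \Iso{f_{L}(\stackS)}{f_{R}(\stackT)}$: the displayed identity turns statement \ref{item-refiner-long} into $A = A \cap B$, which holds precisely when $A \subseteq B$, that is, precisely when $(\star)$ holds. For \ref{item-refiner-initial} $\Leftrightarrow$ \ref{item-refiner-individual-perm} I would translate membership in an isomorphism set back into the defining equation on stacks, namely $g \in \Iso{\stackS}{\stackT}$ means $\stackS^{g} = \stackT$, and $g \in \Iso{f_{L}(\stackS)}{f_{R}(\stackT)}$ means $f_{L}(\stackS)^{g} = f_{R}(\stackT)$. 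Unwinding $(\star)$ elementwise then reads: every $g \in U$ with $\stackS^{g} = \stackT$ satisfies $f_{L}(\stackS)^{g} = f_{R}(\stackT)$, which is \ref{item-refiner-individual-perm}. Here one need only note that quantifying \ref{item-refiner-individual-perm} over \emph{all} pairs $\stackS, \stackT$ rather than over isomorphic ones costs nothing, since $\stackS^{g} = \stackT$ already forces $\stackS$ and $\stackT$ to be isomorphic and renders the implication vacuous otherwise.

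I expect the only genuine care to be needed in the length bookkeeping for the concatenation identity, where the case $|f_{L}(\stackS)| \neq |f_{R}(\stackT)|$ must be separated off so that both sides are seen to vanish, and in recording the harmless redundancy of the factor $U$ on the right-hand sides. Everything else is a mechanical unwinding of the definition of $\Iso{\cdot}{\cdot}$, so I anticipate no substantial obstacle.
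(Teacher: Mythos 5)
Your proof is correct, and it is organised around a genuinely different decomposition than the paper's. The paper proves the cycle \ref{item-refiner-initial}$\,\Rightarrow\,$\ref{item-refiner-long}$\,\Rightarrow\,$\ref{item-refiner-individual-perm}$\,\Rightarrow\,$\ref{item-refiner-initial}, whereas you anchor everything at the single inclusion $(\star)$ --- which, as you observe, is just \ref{item-refiner-initial} with the redundant factor $U$ stripped from the right-hand side --- and prove each of \ref{item-refiner-long} and \ref{item-refiner-individual-perm} equivalent to it. The mathematical engine is identical in both arguments, namely the concatenation identity $\Iso{\stackS \mathop{\Vert} \stackU}{\stackT \mathop{\Vert} \stackV} = \Iso{\stackS}{\stackT} \cap \Iso{\stackU}{\stackV}$ for $|\stackS| = |\stackT|$: the paper extracts it from Remark~\ref{rmk-stack-iso-auto} inside its proof of \ref{item-refiner-initial}$\,\Rightarrow\,$\ref{item-refiner-long}, while you state it as a standalone identity and correctly handle the degenerate case $|\stackU| \neq |\stackV|$, in which both sides vanish. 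What your hub-and-spoke arrangement buys is that \ref{item-refiner-individual-perm} gets matched against the unconcatenated inclusion $(\star)$ rather than against \ref{item-refiner-long}: membership of $g$ in $\Iso{f_{L}(\stackS)}{f_{R}(\stackT)}$ is, by definition, the equation ${f_{L}(\stackS)}^{g} = f_{R}(\stackT)$, so this equivalence is a pure definitional unwinding (together with your correct remark that non-isomorphic pairs render \ref{item-refiner-individual-perm} vacuous). This bypasses the one technical step in the paper's proof, namely \ref{item-refiner-long}$\,\Rightarrow\,$\ref{item-refiner-individual-perm}, where one must first argue that $f_{L}(\stackS)$ and $f_{R}(\stackT)$ have equal lengths and then recover ${f_{L}(\stackS)}^{g} = f_{R}(\stackT)$ entry by entry from the concatenated isomorphism at positions $|\stackS|+1, \ldots, |\stackS|+|f_{L}(\stackS)|$. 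The cost is negligible: the paper's cycle gets \ref{item-refiner-individual-perm}$\,\Rightarrow\,$\ref{item-refiner-initial} for free, whereas you must prove three two-sided equivalences, but each of yours is a one- or two-line observation.
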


\begin{proof}
  $\ref{item-refiner-initial}\Rightarrow\ref{item-refiner-long}$.
  Let $\stackS, \stackT \in \Stacks{\Omega}$ be isomorphic.  Then
  \(
  U \cap \Iso{\stackS}{\stackT}
  \subseteq
  U \cap \Iso{f_{L}(\stackS)}{f_{R}(\stackT)}
  \)
  by assumption, and since $\stackS$ and $\stackT$ have equal lengths,
  then
  \[
    \Iso{\stackS}{\stackT} \cap
    \Iso{f_{L}(\stackS)}{f_{R}(\stackT)}
    =
    \Iso{\stackS \mathop{\Vert} f_{L}(\stackS)}
    {\stackT \mathop{\Vert} f_{R}(\stackT)}
  \]
  by Remark~\ref{rmk-stack-iso-auto}.
  Hence
  \begin{align*}
    U \cap \Iso{\stackS}{\stackT}
     & =
    U \cap \Iso{\stackS}{\stackT}
    \cap \big(U \cap \Iso{f_{L}(\stackS)}{f_{R}(\stackT)}\big) \\
     & =
    U \cap \big(\Iso{\stackS}{\stackT}
    \cap \Iso{f_{L}(\stackS)}{f_{R}(\stackT)}\big)             \\
     & =
    U \cap \Iso{\stackS \mathop{\Vert} f_{L}(\stackS)}
    {\stackT \mathop{\Vert} f_{R}(\stackT)}.
  \end{align*}

  $\ref{item-refiner-long}\Rightarrow\ref{item-refiner-individual-perm}$.
  Let $\stackS, \stackT \in \Stacks{\Omega}$
  and let $g \in U$.  If
  $\stackS^{g} = \stackT$, then $g \in
    \Iso{\stackS}{\stackT}$ by definition, and so $g \in
    \Iso{\stackS \mathop{\Vert} f_{L}(\stackS)}{\stackT \mathop{\Vert}
      f_{R}(\stackT)}$ by assumption.
  Since $\stackS$ and $\stackT$ have equal lengths, and $\stackS
    \mathop{\Vert} f_{L}(\stackS)$ and $\stackT \mathop{\Vert} f_{R}(\stackT)$ have
  equal lengths, it follows that so too do $f_{L}(\stackS)$ and
  $f_{R}(\stackT)$.
  Then ${f_{L}(\stackS)}^{g} = f_{R}(\stackT)$,
  since for each
  $i \in \{1, \ldots, |f_{L}(\stackS)|\}$,
  \[
    {f_{L}(\stackS)[i]}^{g}
      =
      {{\left(\stackS \mathop{\Vert} f_{L}(\stackS)\right)}[|\stackS|+i]}^{g}
    =
    {\left(\stackT \mathop{\Vert} f_{R}(\stackT)\right)}[|\stackT|+i]
      =
      {f_{R}(\stackT)}[i].
  \]

  $\ref{item-refiner-individual-perm}\Rightarrow\ref{item-refiner-initial}$.
  This implication is immediate.
\end{proof}

Perhaps
Lemma~\ref{lem-refiner-equiv-definitions}\ref{item-refiner-long} most clearly
indicates the relevance of refiners to search.

Suppose we wish to search for the intersection $U_{1} \cap \cdots \cap
U_{n}$ of some subsets of $\Sym{\Omega}$.  Let $i \in \{1,\ldots,n\}$, let $(f_{L}, f_{R})$ be a refiner for
$U_{i}$, and let $\stackS$ and $\stackT$ be isomorphic labelled digraph
stacks on $\Omega$,
such that $\Iso{\stackS}{\stackT}$ overestimates (i.e.,
contains) $U_{1} \cap \cdots \cap U_{n}$.

We may use the refiner $(f_{L}, f_{R})$ to \emph{refine} the pair of stacks
$(\stackS, \stackT)$: we apply the functions $f_{L}$ and $f_{R}$, respectively,
to the stacks $\stackS$ and $\stackT$ and obtain an extended pair of stacks
$({\stackS \mathop{\Vert} f_{L}(\stackS)}, {\stackT \mathop{\Vert} f_{R}(\stackT)})$.  We call
this process \emph{refinement}.  Note that a refiner for $U_{i}$ need not
consider the other sets in the intersection.

By Lemma~\ref{lem-refiner-equiv-definitions}\ref{item-refiner-long}, the set of
induced isomorphisms $\Iso{\stackS \mathop{\Vert} f_{L}(\stackS)}{\stackT \mathop{\Vert}
f_{R}(\stackT)}$ contains the elements of $U_{i}$ that belonged to
$\Iso{\stackS}{\stackT}$. Since $U_{i}$ contains $U_{1} \cap
\cdots \cap U_{n}$, it follows that $\Iso{\stackS \mathop{\Vert} f_{L}(\stackS)}{\stackT
\Vert f_{R}(\stackT)}$ is a (possibly smaller) new overestimate for $U_{1} \cap \cdots \cap
U_{n}$ which is contained in the previous overestimate
by Remark~\ref{rmk-stack-iso-auto}.

The following straightforward example illustrates how the condition in
Lemma~\ref{lem-refiner-equiv-definitions}\ref{item-refiner-individual-perm} is
useful for showing that a pair of functions is a refiner for some set.

\begin{example}[Labelled digraph automorphism and
    isomorphism]\label{ex-perfect-digraph}

  Let $\Gamma$ and $\Delta$ be labelled digraphs on $\Omega$,
  and define constant functions $f_\Gamma$ and $f_\Delta$ on $\Stacks{\Omega}$,
  whose images are
  the length-one digraph stacks $[\Gamma]$ and $[\Delta]$, respectively. 

  Since the permutations of $\Omega$ that induce isomorphisms from $[\Gamma]$
  to $[\Delta]$
  are exactly those that induce isomorphisms from $\Gamma$ to $\Delta$, 
  it follows by
  Lemma~\ref{lem-refiner-equiv-definitions}\ref{item-refiner-individual-perm}
  that
  $(f_{\Gamma}, f_{\Delta})$ is a refiner for $\Iso{\Gamma}{\Delta}$.
  In particular,
  $(f_{\Gamma}, f_{\Gamma})$ is a refiner for $\Auto{\Gamma}$.
\end{example}

Example~\ref{ex-perfect-digraph} illustrates the principle that 
the functions of the refiner are equal if it is a refiner for a subgroup. 
The next lemma states a slightly stronger observation. 
We omit the proofs of the next few results and refer to~\cite{directorscut}.

\begin{lemma}[\mbox{cf.~\cite[Prop 2]{leon1997},~\cite[Lemma 6]{leon1991}}]\label{lem-group-refiner-symmetric}
  Let $(f_{L}, f_{R})$ be a refiner for a subset $U \subseteq \Sym{\Omega}$ that
  contains the identity map, $\idOmega$.  Then $f_{L} = f_{R}$.
\end{lemma}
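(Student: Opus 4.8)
The plan is to reduce everything to the per-permutation characterisation of refiners already established in Lemma~\ref{lem-refiner-equiv-definitions}, specifically the equivalence with item~\ref{item-refiner-individual-perm}. That item says that $(f_{L}, f_{R})$ being a refiner for $U$ is the same as the condition that for all $\stackS, \stackT \in \Stacks{\Omega}$ and all $g \in U$, if $\stackS^{g} = \stackT$ then $f_{L}(\stackS)^{g} = f_{R}(\stackT)$. So the first step is simply to record that, since $(f_{L}, f_{R})$ is assumed to be a refiner for $U$, this implication holds.

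Next I would exploit the hypothesis that $\idOmega \in U$. The key observation is that item~\ref{item-refiner-individual-perm} quantifies over \emph{all} pairs of stacks, so I am free to take $\stackT = \stackS$ and $g = \idOmega$. Since $\idOmega$ fixes every labelled digraph stack, we have $\stackS^{\idOmega} = \stackS$, so the hypothesis of the implication is satisfied for this choice, and it yields $f_{L}(\stackS)^{\idOmega} = f_{R}(\stackS)$ for each $\stackS \in \Stacks{\Omega}$.

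Finally, because the action of $\idOmega$ on $\Stacks{\Omega}$ is trivial, the left-hand side $f_{L}(\stackS)^{\idOmega}$ is just $f_{L}(\stackS)$, so $f_{L}(\stackS) = f_{R}(\stackS)$. As $\stackS$ was arbitrary, the two functions agree on all of $\Stacks{\Omega}$, which is exactly the assertion $f_{L} = f_{R}$.

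There is no genuine obstacle here: once the correct equivalent form of the refiner condition is invoked, the result is an immediate specialisation. The only point that requires a moment's care is noticing that item~\ref{item-refiner-individual-perm} applies to an arbitrary pair of stacks, so setting $\stackT = \stackS$ is legitimate; the cited lemma does all of the real work, and this result is essentially its corollary in the case $\idOmega \in U$.
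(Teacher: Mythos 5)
Your proof is correct, and it is essentially the intended argument: the paper itself omits the proof of Lemma~\ref{lem-group-refiner-symmetric} (deferring to the extended version~\cite{directorscut}), but the argument there is exactly this specialisation of Lemma~\ref{lem-refiner-equiv-definitions}\ref{item-refiner-individual-perm} with $\stackT = \stackS$ and $g = \idOmega$, using that $\idOmega$ acts trivially on $\Stacks{\Omega}$. Nothing is missing.
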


Lemma~\ref{lem-refiner-equiv-definitions}\ref{item-refiner-individual-perm}
implies:

\begin{lemma}\label{lem-simple-refiner}
  Let $f$ be a function from $\Stacks{\Omega}$ to itself, and let $U$ be a
  subset of $\Sym{\Omega}$ containing $\idOmega$.
  Then $(f, f)$ is a refiner for $U$ if and only if
  \(f({\stackS}^{g}) = {f(\stackS)}^{g}\)
  for all $g \in U$ and $\stackS \in \Stacks{\Omega}$.
\end{lemma}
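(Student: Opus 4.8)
The plan is to reduce everything to the permutation-wise characterisation of refiners in Lemma~\ref{lem-refiner-equiv-definitions}\ref{item-refiner-individual-perm}, specialised to the case $f_{L} = f_{R} = f$. In that case the characterisation reads: $(f, f)$ is a refiner for $U$ if and only if, for all $\stackS, \stackT \in \Stacks{\Omega}$ and all $g \in U$, the equality $\stackS^{g} = \stackT$ implies $f(\stackS)^{g} = f(\stackT)$. So it suffices to show that this implication, quantified over all $\stackS$, $\stackT$ and all $g \in U$, is equivalent to the single intertwining condition $f(\stackS^{g}) = f(\stackS)^{g}$ for all $g \in U$ and $\stackS \in \Stacks{\Omega}$.

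For the forward direction I would assume $(f, f)$ is a refiner and fix an arbitrary $\stackS \in \Stacks{\Omega}$ and $g \in U$. The key move is to set $\stackT \coloneqq \stackS^{g}$, so that the hypothesis $\stackS^{g} = \stackT$ of the characterisation holds automatically; it then yields $f(\stackS)^{g} = f(\stackT) = f(\stackS^{g})$, which is exactly the desired intertwining relation at the pair $(\stackS, g)$. Since $\stackS$ and $g \in U$ were arbitrary, the intertwining condition holds in full.

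For the converse I would assume the intertwining condition and verify the characterisation directly. Given any $\stackS, \stackT \in \Stacks{\Omega}$ and $g \in U$ with $\stackS^{g} = \stackT$, I substitute to obtain $f(\stackT) = f(\stackS^{g}) = f(\stackS)^{g}$, which is precisely the conclusion required by Lemma~\ref{lem-refiner-equiv-definitions}\ref{item-refiner-individual-perm}. Hence $(f, f)$ is a refiner for $U$, completing the biconditional.

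The proof is short and I do not anticipate a genuine obstacle; the one point requiring a little care is the forward direction, where the correct specialisation $\stackT = \stackS^{g}$ is what converts a statement quantified over pairs $(\stackS, \stackT)$ into the pointwise intertwining relation, and one should confirm that this specialisation really does recover the relation for every $\stackS$ and every $g \in U$. I would also observe that the hypothesis $\idOmega \in U$ is not in fact invoked anywhere in the argument: it is the natural contextual assumption, matching Lemma~\ref{lem-group-refiner-symmetric} and the typical situation in which $U$ is a group, but since Lemma~\ref{lem-refiner-equiv-definitions} holds for arbitrary $U \subseteq \Sym{\Omega}$, the stated equivalence does too.
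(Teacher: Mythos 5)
Your proof is correct and takes essentially the same approach as the paper: the paper obtains this lemma as a direct consequence of Lemma~\ref{lem-refiner-equiv-definitions}\ref{item-refiner-individual-perm} with $f_{L} = f_{R} = f$, which is precisely your reduction, including the key specialisation $\stackT = \stackS^{g}$ in the forward direction. Your side observation that the hypothesis $\idOmega \in U$ is never invoked is also accurate; it appears in the statement only to match the context of the surrounding results (such as Lemma~\ref{lem-group-refiner-symmetric}), where $U$ is a subset containing the identity.
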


Next, we see that any refiner for a non-empty set can be
derived from a function that satisfies the condition in
Lemma~\ref{lem-simple-refiner}.

\begin{lemma}\label{lem-refiner-right-coset}
  Let $U$ be a non-empty subset of $\Sym{\Omega}$, fix $x \in U$,
  and let $f_L$ and $f_R$ be functions from $\Stacks{\Omega}$
  to itself.
  Then the following are equivalent:
  \begin{itemize}[leftmargin=\parindent]
    \item
      $(f_L, f_R)$ is a refiner for $U$.
    \item
      $(f_L, f_L)$ is a refiner for $U {x}^{-1}$
      and
      $f_R(\stackS) = f_L(\stackS^{x^{-1}}){}^{x}$
      for all $\stackS \in \Stacks{\Omega}$.
  \end{itemize}
  In particular,
  if $U$ is a right coset of a subgroup $G \leq \Sym{\Omega}$, then
  $(f_L, f_R)$ is a refiner for the coset $U = G x$
  if and only if
  $(f_L, f_L)$ is a refiner for the group $G$, and
  $f_R(\stackS) = f_L(\stackS^{x^{-1}}){}^{x}$ for all $\stackS \in \Stacks{\Omega}$.
\end{lemma}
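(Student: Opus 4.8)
The plan is to reduce both conditions to the per-permutation characterisation of refiners in Lemma~\ref{lem-refiner-equiv-definitions}\ref{item-refiner-individual-perm}, which (taking $\stackT = \stackS^{g}$ and noting the converse reading) says that a pair $(f_{L}, f_{R})$ is a refiner for a set $V$ precisely when $f_{R}(\stackS^{g}) = f_{L}(\stackS)^{g}$ for all $\stackS \in \Stacks{\Omega}$ and all $g \in V$. I would call this condition $(\star)$ for $V$. The whole lemma then becomes an exercise in translating $(\star)$ for $U$ into $(\star)$ for $U x^{-1}$, using only that $\Sym{\Omega}$ acts on $\Stacks{\Omega}$ as a group: so $(\stackS^{a})^{b} = \stackS^{ab}$, and the map $\stackS \mapsto \stackS^{x}$ is a bijection of $\Stacks{\Omega}$ with inverse $\stackS \mapsto \stackS^{x^{-1}}$.

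For the forward direction, I would assume $(f_{L}, f_{R})$ is a refiner for $U$, so $(\star)$ holds for $U$. First, specialising to $g = x \in U$ gives $f_{R}(\stackS^{x}) = f_{L}(\stackS)^{x}$ for all $\stackS$; substituting $\stackS \mapsto \stackS^{x^{-1}}$ (a bijection) then recovers the claimed formula $f_{R}(\stackS) = f_{L}(\stackS^{x^{-1}})^{x}$, so this formula is in fact forced. Plugging it back into $(\star)$ for $U$ rewrites the left-hand side $f_{R}(\stackS^{g})$ as $f_{L}(\stackS^{g x^{-1}})^{x}$; writing $h = g x^{-1}$ and cancelling the bijective action of $x$ from both sides leaves $f_{L}(\stackS^{h}) = f_{L}(\stackS)^{h}$. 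As $g$ ranges over $U$, the element $h = g x^{-1}$ ranges over exactly $U x^{-1}$, so this is $(\star)$ for $U x^{-1}$ with the pair $(f_{L}, f_{L})$; that is, $(f_{L}, f_{L})$ is a refiner for $U x^{-1}$.

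For the converse, I would assume $(f_{L}, f_{L})$ is a refiner for $U x^{-1}$ (equivalently, $f_{L}(\stackS^{h}) = f_{L}(\stackS)^{h}$ for all $h \in U x^{-1}$) and that $f_{R}$ is given by the stated formula. Then for $g \in U$ I compute $f_{R}(\stackS^{g}) = f_{L}((\stackS^{g})^{x^{-1}})^{x} = f_{L}(\stackS^{g x^{-1}})^{x}$, apply the refiner property of $(f_{L}, f_{L})$ at $h = g x^{-1} \in U x^{-1}$, and recombine the exponents $g x^{-1}$ and $x$ to obtain $f_{L}(\stackS)^{g}$. This is $(\star)$ for $U$, so $(f_{L}, f_{R})$ is a refiner for $U$. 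Finally, the ``in particular'' statement is immediate from the general equivalence: when $U = G x$ for a subgroup $G$ we have $U x^{-1} = G$, and $x = \idOmega \cdot x \in G x = U$, so $x$ is a legitimate choice of base element.

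I expect the only real care point, rather than a genuine obstacle, to be the bookkeeping of the two substitutions: matching the quantifier ranges ($g \in U$ against $h = g x^{-1} \in U x^{-1}$) and justifying the cancellation of the action of $x$, both of which rest solely on the action being a group action. No subgroup structure of $U$ is needed for the general equivalence; it enters only to identify $U x^{-1}$ with $G$ in the corollary-style remark, where the refiner condition for $(f_{L}, f_{L})$ on the group $G$ could alternatively be read through Lemma~\ref{lem-simple-refiner}.
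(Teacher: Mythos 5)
Your proof is correct. Note that the paper itself omits the proof of this lemma (it defers to the extended version~\cite{directorscut}), so there is no in-paper argument to compare against; but your reduction to the pointwise characterisation of Lemma~\ref{lem-refiner-equiv-definitions}\ref{item-refiner-individual-perm} is precisely the mechanism the surrounding text suggests (it is the same device underlying Lemma~\ref{lem-simple-refiner}), and each step checks out: specialising to $g = x$ forces the formula $f_R(\stackS) = f_L(\stackS^{x^{-1}})^{x}$, the substitution $h = g x^{-1}$ correctly matches the quantifier ranges $g \in U$ and $h \in U x^{-1}$, the cancellation of the action of $x$ is legitimate since acting by $x$ is a bijection of $\Stacks{\Omega}$, and the coset case follows since $U x^{-1} = G$ and $x \in Gx = U$.
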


For some pairs of functions, such as those in
the upcoming Example~\ref{ex-set-of-sets}, one may use the
following results to show that a pair of functions gives a refiner.

\begin{lemma}\label{lem-refiner-nice-condition}
  Let $U \subseteq \Sym{\Omega}$, and let $f_{L}$, $f_{R}$ be
  functions from $\Stacks{\Omega}$ to itself such that
  \(
    U \subseteq \Iso{f_{L}(\stackS)}{f_{R}(\stackT)}
  \)
  for all isomorphic $\stackS, \stackT \in \Stacks{\Omega}$.
  Then $(f_{L}, f_{R})$ is a refiner for $U$.
\end{lemma}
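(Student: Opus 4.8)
The plan is to verify Definition~\ref{defn-refiner} directly. Both that definition and the hypothesis of the lemma quantify over exactly the same objects---all isomorphic pairs $\stackS, \stackT \in \Stacks{\Omega}$---so I expect them to slot together with essentially no work. First I would fix an arbitrary isomorphic pair $\stackS, \stackT \in \Stacks{\Omega}$ and reduce the goal to establishing the single containment $U \cap \Iso{\stackS}{\stackT} \subseteq U \cap \Iso{f_{L}(\stackS)}{f_{R}(\stackT)}$ for this pair. Applying the hypothesis to this very pair gives $U \subseteq \Iso{f_{L}(\stackS)}{f_{R}(\stackT)}$, and therefore $U = U \cap \Iso{f_{L}(\stackS)}{f_{R}(\stackT)}$.

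From here the containment is immediate: since $U \cap \Iso{\stackS}{\stackT} \subseteq U$ always holds, and $U$ equals the right-hand side by the previous step, we conclude $U \cap \Iso{\stackS}{\stackT} \subseteq U \cap \Iso{f_{L}(\stackS)}{f_{R}(\stackT)}$. Equivalently, and perhaps more transparently, I would argue pointwise: any $g \in U \cap \Iso{\stackS}{\stackT}$ lies in $U$, so the hypothesis places $g$ in $\Iso{f_{L}(\stackS)}{f_{R}(\stackT)}$ as well, and hence in the intersection on the right. I do not anticipate any genuine obstacle; the only point requiring care is that the hypothesis must be invoked for the particular pair $(\stackS, \stackT)$ under consideration, which is legitimate precisely because that pair is isomorphic---the same condition under which the hypothesis is asserted. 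A second, equally short route would go through Lemma~\ref{lem-refiner-equiv-definitions}\ref{item-refiner-individual-perm}: if $g \in U$ and $\stackS^{g} = \stackT$, then $\stackS$ and $\stackT$ are isomorphic, so the hypothesis yields $g \in \Iso{f_{L}(\stackS)}{f_{R}(\stackT)}$, i.e.\ $f_{L}(\stackS)^{g} = f_{R}(\stackT)$, which is exactly condition~\ref{item-refiner-individual-perm}.
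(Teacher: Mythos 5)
Your proof is correct: the paper itself omits the proof of this lemma (deferring to the extended version~\cite{directorscut}) precisely because the argument is the direct verification you give, namely applying the hypothesis to the same isomorphic pair $(\stackS,\stackT)$ that Definition~\ref{defn-refiner} quantifies over. Either of your two routes works, and the second one (via Lemma~\ref{lem-refiner-equiv-definitions}\ref{item-refiner-individual-perm}) is equally standard; no gap in either.
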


\subsection{Examples of refiners}\label{sec-examples-perfect}

Here we give several further examples of refiners for subgroups and
their cosets, for typical group theoretic problems.
We use refiners from Example~\ref{ex-perfect-list-of-sets} in our experiments of
Section~\ref{sec-grid-groups}.
The refiners given in
Examples~\ref{ex-perfect-perm-centraliser} and~\ref{ex-perfect-list-of-sets}
have in common that they perfectly capture all the information about the set
that we search for.
This is also the case for  the refiners given in Example~\ref{ex-set-of-sets}
for sets of pairwise disjoint subsets of $\Omega$,
and for sets of subsets of $\Omega$ with pairwise distinct sizes.

As we saw in Lemmas~\ref{lem-simple-refiner}
and~\ref{lem-refiner-right-coset},
the
crucial step when creating a refiner for a subgroup $G \leq \Sym{\Omega}$ or
one of its cosets is to define a function $f$ from $\Stacks{\Omega}$ to
itself such that $f(\stackS^{g}) = {f(\stackS)}^{g}$ for all $\stackS \in
\Stacks{\Omega}$ and $g \in G$.

\begin{example}[Permutation centraliser and conjugacy]\label{ex-perfect-perm-centraliser}

  For every $g \in \Sym{\Omega}$, let $\Gamma_{g}$ be the labelled digraph on
  $\Omega$ whose set of arcs is
  \(
  \set{(\alpha, \beta) \in \Omega \times \Omega}
  {\alpha^{g} = \beta},
  \)
  and in which all labels are defined to be $0$.  For every $\stackS \in
  \Stacks{\Omega}$, define
  $f_{g}(\stackS) = [\Gamma_{g}]$.
  Let $g, h \in \Sym{\Omega}$ be arbitrary.  Then $(f_{g}, f_{g})$ is a
  refiner for the centraliser of $g$ in $\Sym{\Omega}$, and $(f_{g}, f_{h})$ is a refiner
  for the set of conjugating elements $\set{x \in \Sym{\Omega}}{g^{x} = h}$.

  We illustrate one instance of this refiner.
  Let $g = (1\,2)(3\,6\,5) \in \Sn{6}$ and
  let $K$ denote the centraliser of $g$ in $\Sn{6}$.
  A diagram of $\Gamma_{g}$ is shown
  in Figure~\ref{fig-perm-centraliser}.
  Note that there is a unique loop, namely at vertex $4$,
  because $4$ is the unique fixed point of $g$ on $\{1,\ldots,6\}$.
  \begin{figure}[!ht]
    \begin{center}
      \begin{tikzpicture}
        \foreach \x in {1,2,3,4,5,6} {
          \node[circle, draw=black] (\x) at (-\x*60+120:1.5cm) {$\mathbf{\x}$};};

        \arcSym{1}{2}
        \arc{3}{6}
        \arc{6}{5}
        \arc{5}{3}
        \looparcL{4}

        \node at (3, 0) {};
        \node at (-3, 0) {$\Gamma_{g}$:};
      \end{tikzpicture}
    \end{center}

    \caption[Refiner for permutation centraliser]{
      The labelled digraph $\Gamma_{g}$ for $g = (1\,2)(3\,6\,5)$,
      from Example~\ref{ex-perfect-perm-centraliser}.
    }\label{fig-perm-centraliser}
  \end{figure}
  
  \noindent
  Since $(f_g, f_g)$ is a refiner for $K$,
  Lemma~\ref{lem-simple-refiner} implies
  that $K \leq \Auto{[\Gamma_{g}]}$.
  We prove that in fact
  $\Auto{[\Gamma_{g}]}=K$, and first note that
   $\Auto{[\Gamma_{g}]} = \Auto{\Gamma_{g}}$.
  Every automorphism of $\Gamma_{g}$
  stabilises the connected components (because
  they have different sizes) and so it induces automorphisms on them.
  Hence
  $\Auto{\Gamma_{g}} \leq \langle (1\,2), (3\,5),
  (3\,6) \rangle$.  But none of the transpositions in
  $\langle (3\,5), (3\,6) \rangle$ is an automorphism of $\Gamma_{g}$, because
  the arcs between $3$, $5$, and $6$ only go in one direction.  Therefore
  $\Auto{\Gamma_{g}} = \< (1\,2), (3\,6\,5) \> = K$, as stated.
\end{example}

\begin{example}[List of subsets stabiliser and
    transporter]\label{ex-perfect-list-of-sets}

  Whenever $k \in \N_{0}$ and $V_{i} \subseteq \Omega$ for each $i \in \{1,
  \ldots, k\}$ and $\mathcal{V} \coloneqq [V_{1}, \ldots, V_{k}]$, we let
  $\Gamma_{\mathcal{V}}$ be the labelled digraph on $\Omega$ without arcs,
  where the label of each vertex $\alpha \in \Omega$ is $\set{i \in \{1,
  \ldots, k\}}{\alpha \in V_{i}}$. For every
  $\stackS \in \Stacks{\Omega}$, define $f_{\mathcal{V}}(\stackS)$ to be the length-one stack
  $[\Gamma_{\mathcal{V}}]$.  If $g \in \Sym{\Omega}$, then
  $\mathcal{V}^{g} \coloneqq [V_{1}^{g}, \ldots, V_{k}^{g}]$.

  Let $\mathcal{V}$ and $\mathcal{W}$ be arbitrary lists of subsets of $\Omega$
  with notation as explained above.
  Then
  $(f_{\mathcal{V}}, f_{\mathcal{W}})$ is a refiner for the set $\set{g
  \in \Sym{\Omega}}{\mathcal{V}^{g} = \mathcal{W}}$,
  and
  $(f_{\mathcal{V}}, f_{\mathcal{V}})$ is a refiner for the group
  $\set{g \in \Sym{\Omega}}{\mathcal{V}^{g} = \mathcal{V}}$.

  To demonstrate this, let
  \(\mathcal{V} = [\{1,3,6\}, \{3,5\}, \{2,4\}, \{2,3,4\}]\) be a list of subsets
  of $\{1,\ldots,6\}$. See
  Figure~\ref{fig-list-of-sets}.
  Since $(f_{\mathcal{V}},f_{\mathcal{V}})$ is a refiner for
  $A \coloneqq \set{g \in \Sn{6}}{\mathcal{V}^{g} = \mathcal{V}}$,
  it follows that $\Auto{[\Gamma_{\mathcal{V}}]}$
  (i.e.,\ $\Auto{\Gamma_{\mathcal{V}}}$) contains $A$; indeed,
  \(A
  =
  \< (1\,6), (2\,4) \>
  =
  \Auto{\Gamma_{\mathcal{V}}}
  \).
\end{example}

\begin{figure}[!ht]
  \begin{center}
    \begin{tikzpicture}
      \foreach \x in {1,2,3,4,5,6} {
        \node[circle, draw=black] (\x) at (-\x*60+120:1.3cm) {$\mathbf{\x}$};};

      \node at (60:2.0cm)   {$\{1\}$};
      \node at (0:2.2cm)    {$\{3,4\}$};
      \node at (-60:2.0cm)  {$\{1,2,4\}$};
      \node at (-120:2.0cm) {$\{3,4\}$};
      \node at (-180:2.0cm) {$\{2\}$};
      \node at (-240:2.0cm) {$\{1\}$};
      \node at (3.5, 0) {};
      \node at (-3.5, 0) {$\Gamma_{\mathcal{V}}$:};
    \end{tikzpicture}
  \end{center}
  \caption{
    The labelled digraph $\Gamma_{\mathcal{V}}$, for $\mathcal{V} \coloneqq [\{1,3,6\}, \{3,5\}, \{2,4\}, \{2,3,4\}]$, from
    Example~\ref{ex-perfect-list-of-sets}.
  }\label{fig-list-of-sets}
\end{figure}

Example~\ref{ex-perfect-list-of-sets} in particular gives
refiners for the stabilisers and transporter sets of ordered partitions.
If we encode a list $[x_{1}, \ldots, x_{m}]$ in $\Omega$ as
$[\{x_{1}\}, \ldots, \{x_{m}\}]$, and a subset $\{y_{1},
\ldots, y_{n}\} \subseteq \Omega$ as $[\{y_{1},
\ldots, y_{n}\}]$, then we see that Example~\ref{ex-perfect-list-of-sets} can be
used to create refiners for the stabilisers and transporters of lists in $\Omega$ or
subsets of $\Omega$.

For unordered partitions, the following example is applicable.

\begin{example}[Refiner for set of subsets stabiliser and
  transporter]\label{ex-set-of-sets}

  Let $\mathcal{V}$ be an arbitrary set of subsets of $\Omega$.
  Let $k \in \N_{0}$ and $V_{i} \subseteq \Omega$ for all $i \in
  \{1, \ldots k\}$ be such that $\mathcal{V} = \{ V_{1}, \ldots, V_{k} \}$.
  We define
  $\Gamma_{\mathcal{V}}$ to be the labelled digraph
  on $\Omega$ whose set of arcs is
  \[
  \set{(\alpha, \beta) \in \Omega \times \Omega}{\alpha \neq \beta\ \text{and}\
  \{\alpha, \beta\} \subseteq V_{i}\ \text{for some}\ i};
  \]
  where the label of a vertex $\alpha$ is
  a list of length $\max\set{|V_{i}|}{i \in \{1, \ldots, k\}}$
  with $i\textsuperscript{th}$ entry
  \[\labelFunc(\alpha)[i] \coloneqq
  (|\set{j \in \{1, \ldots, k\}}{\alpha \in V_{j}\ \text{and}\ |V_{j}| = i}|,
  \,k),\]
  and the label of each arc $(\alpha, \beta)$ in $\Gamma_{\mathcal{V}}$
  is a list of the same length, with
  $i\textsuperscript{th}$ entry
  \[
  \labelFunc(\alpha, \beta)[i] \coloneqq
  (|\set{j \in \{1,\ldots,k\}}{\alpha,\beta \in V_{j}\ \text{and}\ |V_{j}| = i}|,
   \,k).
  \]
  The connected components of $\Gamma_{\mathcal{V}}$ with at least two vertices
  are the sets in $\mathcal{V}$ that are not singletons.
  The label of a vertex (or arc) encodes, for each size of
  subset, the number of subsets in $\mathcal{V}$ that have that size and contain
  that vertex (or arc).

  For every $\stackS \in \Stacks{\Omega}$, we define $f_{\mathcal{V}}(\stackS)$
  to be the length-one stack $[\Gamma_{\mathcal{V}}]$.
  In addition,
  for all $g \in \Sym{\Omega}$,
  we define $\mathcal{V}^{g} = \{V_{1}^{g}, \ldots, V_{k}^{g}\}$.

  Let $\mathcal{V}$ and $\mathcal{W}$ be arbitrary sets of subsets of $\Omega$.
  Since the labelled digraphs $\Gamma_{\mathcal{V}}$ and $\Gamma_{\mathcal{W}}$
  were defined so that
  $\set{g \in \Sym{\Omega}}{\mathcal{V}^{g} = \mathcal{W}} \subseteq
  \Iso{\Gamma_{\mathcal{V}}}{\Gamma_{\mathcal{W}}}$, it follows by
  Lemma~\ref{lem-refiner-nice-condition} that
  $(f_{\mathcal{V}}, f_{\mathcal{W}})$ is a refiner for the set $\set{g
  \in \Sym{\Omega}}{\mathcal{V}^{g} = \mathcal{W}}$;
  in particular,
  $(f_{\mathcal{V}}, f_{\mathcal{V}})$ is a refiner for the stabiliser group
  $\set{g \in \Sym{\Omega}}{\mathcal{V}^{g} = \mathcal{V}}$.

  As a specific example, we
  consider the sets  $\mathcal{V} \coloneqq \{\{1\}, \{1,2,3\}, \{2,4\}\}$
  and $\mathcal{W} \coloneqq \{\{5\}, \{2,3,4\}, \{3,4\}\}$.
  Both $\mathcal{V}$ and $\mathcal{W}$ contain three subsets, which have sizes
  $1$, $2$ and $3$, so it seems superficially plausible that there
  exist elements of $\Sn{5}$ that map $\mathcal{V}$ to $\mathcal{W}$.

  In order
  to search for the set $\set{g \in \Sn{5}}{\mathcal{V}^{g} =
  \mathcal{W}}$,
  then (with all the following notation as defined above) we can
  use the refiner $(f_{\mathcal{V}}, f_{\mathcal{W}})$ to produce
  stacks $[\Gamma_{\mathcal{V}}]$ and $[\Gamma_{\mathcal{W}}]$ such that
  $\Iso{[\Gamma_{\mathcal{V}}]}{[\Gamma_{\mathcal{W}}]}
  = \Iso{\Gamma_{\mathcal{V}}}{\Gamma_{\mathcal{W}}}$ contains this transporter
  set. The labelled digraphs $\Gamma_{\mathcal{V}}$ and $\Gamma_{\mathcal{W}}$
  are depicted in Figure~\ref{fig-set-of-sets};
  although we do not give the correspondence explicitly, two vertices or
  two arcs have the same visual style if and only if they have the same
  label.
  There are many ways to show that $\Gamma_{\mathcal{V}}$ and
  $\Gamma_{\mathcal{W}}$ are non-isomorphic: for example, they have
  different numbers of arcs.  Hence no element of $\Sn{5}$
  maps $\mathcal{V}$ to $\mathcal{W}$.

\begin{figure}[!ht]
  \begin{center}
    \begin{tikzpicture}
      \tikzstyle{white}=[circle, draw=black]
      \tikzstyle{grey}=[circle, draw=black, fill=gray!25]
      \tikzstyle{black}=[circle, draw=white, fill=black!100]
      \tikzstyle{dots}=[circle, draw=black, pattern=dots, pattern color=gray!50]

      \node[style={
            circle,
            draw=black,
            shade,
            bottom color=black!90, top color=gray!10,
            align=center}]
      (1) at (90:1.4cm)   {$\color{white}\mathbf{1}$};
      \node[white]  (2) at (18:1.4cm)   {$\mathbf{2}$};
      \node[grey]   (3) at (-54:1.4cm)  {$\mathbf{3}$};
      \node[style={
            circle,
            draw=black,
            shade,
            top color=gray!100, bottom color=white, align=center}]
      (4) at (-126:1.4cm) {$\color{white}\mathbf{4}$};
      \node[black]  (5) at (162:1.4cm)  {$\color{white}\mathbf{5}$};

      \arcSym{1}{2}
      \arcSym{2}{3}
      \arcSym{1}{3}
      \arcSymDash{2}{4}

      \node at (3, 0) {};
      \node at (-3, 0) {$\Gamma_{\mathcal{V}}$:};
    \end{tikzpicture}
    \quad
    \begin{tikzpicture}
      \tikzstyle{white}=[circle, draw=black]
      \tikzstyle{grey}=[circle, draw=black, fill=gray!25]
      \tikzstyle{black}=[circle, draw=white, fill=black!100]
      \tikzstyle{str-ne}=[circle, draw=black, pattern=grid,
                          pattern color=gray!50]
      \tikzstyle{white}=[circle, draw=black]

      \node[black]  (1) at (90:1.4cm)   {$\color{white}\mathbf{1}$};
      \node[grey]   (2) at (18:1.4cm)   {$\mathbf{2}$};
      \node[white]  (3) at (-54:1.4cm)  {$\mathbf{3}$};
      \node[white]  (4) at (-126:1.4cm) {$\mathbf{4}$};
      \node[str-ne] (5) at (162:1.4cm)  {$\mathbf{5}$};

      \arcSym{2}{4}
      \arcSym{2}{3}
      \arcSymDot{3}{4}

      \node at (3, 0) {};
      \node at (-3, 0) {$\Gamma_{\mathcal{W}}$:};
    \end{tikzpicture}
  \end{center}
  \caption[set-of-sets]{
    Illustration of the labelled digraphs
    $\Gamma_{\mathcal{V}}$ and $\Gamma_{\mathcal{W}}$
    from
    Example~\ref{ex-set-of-sets}, for the sets of subsets
    $\mathcal{V} \coloneqq \{\{1\}, \{1,2,3\}, \{2,4\}\}$
    and $\mathcal{W} \coloneqq \{\{5\}, \{2,3,4\}, \{3,4\}\}$
    of $\{1,\ldots,5\}$.
  }\label{fig-set-of-sets}
\end{figure}
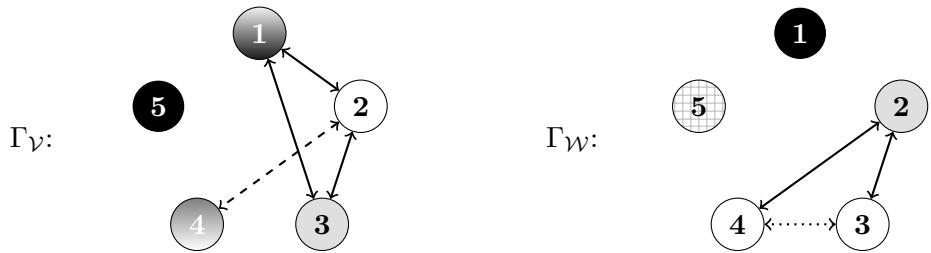
\end{example}

\section{Approximating isomorphisms and fixed points of
  stacks}\label{sec-approximations}

When searching with
labelled digraphs stacks, 
it might be too expensive to compute the set of isomorphisms
exactly, which is why we choose to only approximate this set instead.
Our methods always lead to an overestimation of the set, and 
worse approximations typically lead to larger searches.
As a consequence, there is a compromise to be made
between the accuracy of such
overestimates, and the amount of effort spent in computing them.

In Definition~\ref{defn-approx-iso}, we introduce the concept of an isomorphism
approximator for pairs of labelled digraphs stacks, which is a vital
component of the algorithms in Section~\ref{sec-search}. Later,
we define the approximators that we use in our experiments.

\begin{definition}\label{defn-approx-iso}
  An \emph{isomorphism approximator} for labelled digraph stacks is a function
  $\approxFunc$ that maps a pair of labelled digraph stacks on $\Omega$
  to either the empty set $\varnothing$, or a right coset of a subgroup of
  $\Sym{\Omega}$,
  such that the following statements hold for all $\stackS, \stackT \in
  \Stacks{\Omega}$
  (we usually abbreviate $\approxFunc(\stackS,\stackS)$ as $\approxFunc(\stackS)$):
  \begin{enumerate}[label=\textrm{(\roman*)}]
    \item\label{item-approx-true-overestimate}
      $\Iso{\stackS}{\stackT} \subseteq \approxFunc(\stackS,\stackT)$.

    \item\label{item-approx-different-lengths}
      If $|\stackS| \neq |\stackT|$, then
      $\approxFunc(\stackS,\stackT) = \varnothing$.

    \item\label{item-approx-right-coset-of-aut}
      If $\approxFunc(\stackS,\stackT) \neq \varnothing$, then
      $\approxFunc(\stackS,\stackT) = \approxFunc(\stackS) \cdot h$ for some
      $h \in \Sym{\Omega}$.
  \end{enumerate}
\end{definition}

Let $\approxFunc$ be an isomorphism approximator and let $\stackS, \stackT \in
\Stacks{\Omega}$.
The set  $\Iso{\stackS}{\stackT}$ of isomorphisms induced by $\Sym{\Omega}$ is either empty,
or it is
a right coset of the induced automorphism group $\Auto{\stackS}$.
Since $\idOmega \in \Iso{\stackS}{\stackS} = \Auto{\stackS}$, it follows by
definition that $\approxFunc(\stackS)$ is a subgroup of $\Sym{\Omega}$
that contains $\Auto{\stackS}$.

The value of $\approxFunc(\stackS,\stackT)$ should be interpreted as follows.  By
Definition~\ref{defn-approx-iso}\ref{item-approx-true-overestimate},
$\approxFunc(\stackS,\stackT)$
gives a true overestimate for $\Iso{\stackS}{\stackT}$.  Hence if
$\approxFunc(\stackS,\stackT) = \varnothing$, then the approximator has correctly
determined that $\stackS$ and $\stackT$ are non-isomorphic.
By Definition~\ref{defn-approx-iso}\ref{item-approx-different-lengths},
an isomorphism approximator correctly determines that stacks of different
lengths are non-isomorphic. Otherwise, the approximator returns a right coset
in $\Sym{\Omega}$ of its overestimate for $\Auto{\stackS}$.

In Section~\ref{sec-refiners-via-stack}, we need the ability to
compute fixed points of the automorphism group (induced by $\Sym{\Omega}$) of any 
labelled digraph stack.
A point $\omega \in \Omega$ is \emph{a fixed point of a
subgroup} $G \leq \Sym{\Omega}$ if and only if $\omega^{g}=\omega$ for all $g
\in G$.
Computing fixed points is particularly useful when it comes to using orbits and
orbital graphs in our search techniques.
However, it can be computationally expensive to compute the fixed points exactly,
and so we introduce the following definition.

\begin{definition}\label{defn-approx-fixed}
  A \emph{fixed-point approximator} for labelled digraph stacks is a function
  $\fixedFunc$ that maps each labelled digraph stack on $\Omega$ to
  a finite list in $\Omega$,
  such that for each $\stackS \in \Stacks{\Omega}$:
  \begin{enumerate}[label=\textrm{(\roman*)}]

    \item\label{item-fixed}
      Each entry in \(\Fixed{\stackS}\) is a fixed point of $\Auto{\stackS}$,
      and

    \item\label{item-fixed-invariant}
      \({\Fixed{\stackS}}^{g} = \Fixed{{\stackS}^{g}}\) for all \(g
      \in \Sym{\Omega}\).

  \end{enumerate}
\end{definition}

\subsection{Computing automorphisms and isomorphisms
  exactly}\label{sec-exact-approx}

One way to approximate isomorphisms and fixed points of labelled
digraph stacks is simply to compute them exactly. 
For example, we can convert labelled digraph stacks into their squashed labelled
digraphs in order to take advantage of existing tools for computing with digraphs.

To describe this formally, we require the concept of a canoniser of labelled
digraphs.

\begin{definition}\label{defn-canoniser}
  A \emph{canoniser} of labelled digraphs is a function $\canonFunc$ from
  the set of labelled digraphs on $\Omega$ to $\Sym{\Omega}$ such that, for all
  labelled digraphs $\Gamma$ and $\Delta$ on $\Omega$,
  $\Gamma^{\Canon{\Gamma}} = \Delta^{\Canon{\Delta}}$
  if and only if $\Gamma$ and $\Delta$ are isomorphic.
\end{definition}

We can use the software \bliss~\cite{bliss} or
\nauty~\cite{practical2} to canonise labelled digraphs, after converting
them into vertex-labelled digraphs in a way that preserves isomorphisms.

\begin{definition}[Canonising and computing automorphisms
    exactly]\label{defn-nauty-approx}
  Let $\canonFunc$ be a canoniser of labelled digraphs.
  We define functions \(\fixedCanonical\) and \(\approxCanonical\):
  for all $\stackS, \stackT \in \Stacks{\Omega}$, let $g =
  \Canon{\Squash{\stackS}}$ and $h = \Canon{\Squash{\stackT}}$,
  let $L$ be the list
  $[i \in \Omega\,:\,i \text{\ is fixed by\,}\Auto{\Squash{\stackS}^{g}}]$,
  ordered as in $\Omega$,
  and define
  \begin{align*}
    \fixedCanonical(\stackS)
     & = L^{g^{-1}}, \text{\ and} \\
    \approxCanonical(\stackS,\stackT)
     & =
    \begin{cases}
      \Auto{\Squash{\stackS}} \cdot g h^{-1}
       & \text{if}\ \Squash{\stackS}^{g} = \Squash{\stackT}^{h},    \\
      \varnothing
       & \text{otherwise.}
    \end{cases}
  \end{align*}
\end{definition}

\begin{lemma}\label{lem-nauty-approx}
  Let the functions $\approxCanonical$ and $\fixedCanonical$ be given as in
  Definition~\ref{defn-nauty-approx}.
  Then $\approxCanonical$ is an isomorphism approximator, and
  $\fixedCanonical$ is a fixed-point approximator.
  Furthermore, for all $\stackS, \stackT \in \Stacks{\Omega}$,
  $\approxCanonical(\stackS,\stackT) = \Iso{\stackS}{\stackT}$.
\end{lemma}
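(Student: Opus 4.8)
The plan is to prove the exactness statement $\approxCanonical(\stackS,\stackT) = \Iso{\stackS}{\stackT}$ first, and then read off the three axioms of an isomorphism approximator as consequences. By Lemma~\ref{lem-squash-same-iso} it suffices to argue with the squashed digraphs $\Gamma \coloneqq \Squash{\stackS}$ and $\Delta \coloneqq \Squash{\stackT}$, since $\Iso{\stackS}{\stackT} = \Iso{\Gamma}{\Delta}$. Writing $g = \Canon{\Gamma}$ and $h = \Canon{\Delta}$, the defining property of a canoniser (Definition~\ref{defn-canoniser}) says that $\Gamma^{g} = \Delta^{h}$ holds exactly when $\Gamma$ and $\Delta$ are isomorphic. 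If they are not isomorphic, then both $\approxCanonical(\stackS,\stackT)$ and $\Iso{\Gamma}{\Delta}$ are empty. If they are isomorphic, then $\Gamma^{g} = \Delta^{h}$ gives $\Gamma^{gh^{-1}} = \Delta$, so $gh^{-1} \in \Iso{\Gamma}{\Delta}$; and since a nonempty set of isomorphisms between two labelled digraphs is a right coset of the automorphism group of the source, $\Iso{\Gamma}{\Delta} = \Auto{\Gamma} \cdot gh^{-1}$, which is precisely the value of $\approxCanonical(\stackS,\stackT)$ in that case.

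Granting exactness, the axioms of Definition~\ref{defn-approx-iso} follow quickly. Condition~\ref{item-approx-true-overestimate} holds with equality. For condition~\ref{item-approx-different-lengths}, if $|\stackS| \neq |\stackT|$ then $\Iso{\stackS}{\stackT} = \varnothing$ by Remark~\ref{rmk-stack-iso-auto}, hence $\approxCanonical(\stackS,\stackT) = \varnothing$. For condition~\ref{item-approx-right-coset-of-aut}, observe that $\approxCanonical(\stackS) = \Iso{\stackS}{\stackS} = \Auto{\stackS}$ is a subgroup, so whenever $\approxCanonical(\stackS,\stackT) = \Iso{\stackS}{\stackT}$ is nonempty it equals the right coset $\Auto{\stackS} \cdot x$ for any $x$ in it. One also checks directly from the definition that the output always lies in the required codomain, being either $\varnothing$ or the right coset $\Auto{\Squash{\stackS}} \cdot gh^{-1}$ of the subgroup $\Auto{\Squash{\stackS}} = \Auto{\stackS}$.

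It remains to treat $\fixedCanonical$. The computation rests on two identities for $\Gamma = \Squash{\stackS}$ and $g = \Canon{\Gamma}$: the conjugation formula $\Auto{\Gamma^{g}} = g^{-1} \Auto{\Gamma} g$, and $\Auto{\Gamma} = \Auto{\stackS}$ from Lemma~\ref{lem-squash-same-iso}. Together these show that a point $i \in \Omega$ is fixed by $\Auto{\Gamma^{g}}$ if and only if $i^{g^{-1}}$ is fixed by $\Auto{\Gamma} = \Auto{\stackS}$. As $\fixedCanonical(\stackS) = L^{g^{-1}}$ with $L$ listing the fixed points of $\Auto{\Gamma^{g}}$, every entry of $\Fixed{\stackS}$ is then a fixed point of $\Auto{\stackS}$, establishing Definition~\ref{defn-approx-fixed}\ref{item-fixed}.

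The invariance condition~\ref{item-fixed-invariant} is the delicate step, and I expect it to be the main obstacle. Fixing $p \in \Sym{\Omega}$ and setting $g' = \Canon{\Gamma^{p}}$ (using $\Squash{\stackS^{p}} = \Gamma^{p}$), I would first note that since $\Gamma$ and $\Gamma^{p}$ lie in the same $\Sym{\Omega}$-orbit, the canoniser forces $\Gamma^{g} = (\Gamma^{p})^{g'}$; hence $\Auto{\Gamma^{g}} = \Auto{(\Gamma^{p})^{g'}}$, and the two fixed-point lists, both ordered as in $\Omega$, coincide in a common list $L$. Thus $\fixedCanonical(\stackS) = L^{g^{-1}}$ and $\fixedCanonical(\stackS^{p}) = L^{g'^{-1}}$, and the goal reduces to $L^{g^{-1}p} = L^{g'^{-1}}$ \emph{entrywise}. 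From $\Gamma^{g} = \Gamma^{pg'}$ the element $c \coloneqq g g'^{-1} p^{-1}$ lies in $\Auto{\Gamma}$, and substituting $g'^{-1} = g^{-1} c p$ shows that for $i \in L$ the two images differ only by applying $c$ to $i^{g^{-1}}$. Since $i^{g^{-1}}$ is a fixed point of $\Auto{\Gamma}$ by the previous paragraph, it is fixed by $c \in \Auto{\Gamma}$, so $i^{g'^{-1}} = i^{g^{-1}p}$ for every $i \in L$ and therefore $\Fixed{\stackS}^{p} = \Fixed{\stackS^{p}}$. The care required here is exactly in tracking the two independently chosen canonising elements $g$ and $g'$, recognising that their mismatch $c$ is an automorphism of $\Gamma$, and respecting the list ordering so that the equality holds entrywise rather than merely as a set identity.
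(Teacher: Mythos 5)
Your proposal is correct and follows essentially the same route as the paper: exactness of $\approxCanonical$ via Lemma~\ref{lem-squash-same-iso} and the canoniser property (from which the approximator axioms follow), then the fixed-point conditions via conjugation of automorphism groups. Your treatment of invariance, extracting the mismatch $c = gg'^{-1}p^{-1} \in \Auto{\Squash{\stackS}}$ and using that it fixes the relevant points, is just the paper's argument (which uses the automorphism $g^{-1}xr$ of $\Squash{\stackS}^{g}$) conjugated by $g$.
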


\begin{proof}
  Throughout the proof, we repeatedly use Lemma~\ref{lem-squash-same-iso} and
  Definition~\ref{defn-canoniser}. As in Definition~\ref{defn-nauty-approx}, let $g
  = \Canon{\Squash{\stackS}}$ and $h = \Canon{\Squash{\stackT}}$.

  First, we show that $\approxCanonical(\stackS,\stackT) = \Iso{\stackS}{\stackT}$,
  which implies that
  Definition~\ref{defn-approx-iso}\ref{item-approx-true-overestimate}
  and~\ref{item-approx-different-lengths} hold.
  If $\stackS$ and $\stackT$ are non-isomorphic, then $\Squash{\stackS}^{g} \neq
  \Squash{\stackT}^{h}$, and so $\approxCanonical(\stackS,\stackT) =
  \Iso{\stackS}{\stackT} = \varnothing$.
  Otherwise $g h^{-1} \in
  \Iso{\Squash{\stackS}}{\Squash{\stackT}} = \Iso{\stackS}{\stackT}$.
  Therefore
  \[
  \approxCanonical(\stackS,\stackT)
  =
  \Auto{\Squash{\stackS}} \cdot gh^{-1}
  =
  \Auto{\stackS} \cdot gh^{-1}
  =
  \Iso{\stackS}{\stackT}.
  \]
  Definition~\ref{defn-approx-iso}\ref{item-approx-right-coset-of-aut} clearly
  holds.
  Therefore $\approxCanonical$ is an isomorphism approximator.

  Define
  $L = {[i \in \Omega\,:\,i \text{\ is fixed by\,}
        \Auto{\Squash{\stackS}^{g}}]}$,
  ordered as usual in $\Omega$.
  Since
  $
  \Auto{\stackS}^{g}
  =
  \Auto{\Squash{\stackS}}^{g}
  =
  \Auto{\Squash{\stackS}^{g}}
  $,
  it follows that $L$ consists of
  fixed points of $\Auto{\stackS}^{g}$, and so $\fixedCanonical(\stackS)$
  (which equals $L^{g^{-1}}$)
  consists of fixed points of $\Auto{\stackS}$.
  Therefore Definition~\ref{defn-approx-fixed}\ref{item-fixed} holds.  To show
  that Definition~\ref{defn-approx-fixed}\ref{item-fixed-invariant} holds, let
  $x \in \Sym{\Omega}$ be arbitrary and define $r =
  \Canon{\Squash{\stackS^{x}}}$.  Since
  $\Squash{\stackS}$ and $\Squash{\stackS^{x}}$ are isomorphic, it
  follows that $\Squash{\stackS}^{g} = \Squash{\stackS^{x}}^{r}$.
  In particular,
  $g^{-1} x r$ is an automorphism of $\Squash{\stackS}^{g}$,
  and so $g^{-1} x r$ fixes $L$.  Thus
  \[
    {\fixedCanonical(\stackS)}^{x}
    = L^{g^{-1} x}
    = L^{(g^{-1} x r) r^{-1}}
    = L^{r^{-1}}
    = \fixedCanonical(\stackS^{x}). \qedhere
  \]
\end{proof}

\subsection{Approximations via equitable labelled digraphs}\label{sec-equitable}

We can use vertex labels to overestimate the set of isomorphisms from one labelled digraph to another,
because these isomorphisms
map the set of vertices with any particular label onto a set of vertices with the same label.
In this section we use the term \emph{vertex labelling} as an abbreviation for the
restriction of a digraph labelling function to the set of vertices.

In order to present the following approximator functions, we require the
notion of an equitable labelled digraph.

\subsubsection{Equitable labelled digraphs}

\begin{definition}\label{defn-equitable}
  A labelled digraph $(\Omega, A,
    \labelFunc)$ is \emph{equitable} if and only if, for all vertices $\alpha,
    \beta \in \Omega$ with $\labelFunc(\alpha) = \labelFunc(\beta)$, and for all labels $y$ and
    $z$:
  \begin{align*}
     & |\set{(\alpha, \delta) \in A}{\labelFunc(\delta) = y\ \text{and}\
      \labelFunc(\alpha, \delta) = z}|
    = \\ & \hspace{5cm}
    |\set{(\beta,  \delta) \in A}{\labelFunc(\delta) = y\ \text{and}\
      \labelFunc(\beta,  \delta) = z}|,\ \text{and} \\
     & |\set{(\delta, \alpha) \in A}{\labelFunc(\delta) = y\ \text{and}\
      \labelFunc(\delta, \alpha) = z}|
    = \\ & \hspace{5cm}
    |\set{(\delta, \beta) \in A}{\labelFunc(\delta) = y\ \text{and}\
      \labelFunc(\delta, \beta) = z}|.
  \end{align*}
  In other words, the labelled digraph is equitable if and only if, for all
  labels $x$, $y$, and $z$, every vertex with label $x$ has
  some common number of \emph{out-neighbours} with label $y$ via arcs with label
  $z$, and similarly, every vertex with label $x$ has some common number of
  \emph{in-neighbours} with label $y$ via arcs with label $z$.
\end{definition}

Definition~\ref{defn-equitable} extends the well-known
concepts of equitable colourings~\cite[Section~3.1]{practical2} and
partitions~\cite[Definition~29]{newrefiners} of vertex-labelled graphs and
digraphs.
The traditional notion requires that for all labels \(y\) and \(z\),
there are constants for the number of arcs from each vertex with label \(y\) to vertices with label \(z\),
and for the number of arcs in the other direction.
Definition~\ref{defn-equitable} additionally takes arc labels into account.

It is possible to define a procedure that takes a labelled digraph $\Gamma \coloneqq (\Omega, A, \labelFunc)$ and
`refines' the vertex labelling to obtain a labelled digraph $\Gamma' \coloneqq (\Omega, A, \labelFunc')$,
which uses the fewest possible labels such that:
arc labels are unchanged,
vertices with the same label in \(\Gamma'\) have the same label in $\Gamma$,
and \(\Gamma'\) is equitable.
Moreover, this can be done consistently between
labelled digraphs $\Gamma$ and $\Delta$, such that the
overestimate of $\Iso{\Gamma}{\Delta}$ that can be obtained
from the equitable labelled digraphs is contained in the
overestimate from the original vertex labels.
We present an example of such a procedure, phrased as an algorithm, as
Algorithm~4.8 in~\cite{directorscut}.
Here we just describe the idea of such an ``equitable vertex labelling algorithm'' and abbreviate it as \EVLA{}.

Given a labelled digraph, \EVLA{} repeatedly
tests whether each set of vertices with the same label satisfies the condition
in Definition~\ref{defn-equitable}. For each such set, either the
condition is satisfied, and a new label for this set is devised that encodes
information about how the condition was satisfied, or the
condition is not satisfied, and the vertices are given new labels accordingly, which
encode information about why they were created.

We define $\equitableFunc$ to be a function defined by \EVLA{} that
maps each labelled digraph to a list of pairs of the form $(x, W)$, for some
label $x \in \labelSet$ and non-empty $W \subseteq \Omega$, sorted by first component (recall that \(\labelSet\) is totally ordered).
This list encodes that the vertices in $W$ are those with label $x$ in the
equitable digraph given by \EVLA{}.

In the following lemma, we present some properties of $\equitableFunc$.
For a more detailed discussion we refer to~\cite{directorscut}, and we omit the
proof of the lemma because it is mathematically straightforward.

\begin{lemma}\label{lem-equitable}
  Let $\Gamma$ and $\Delta$ be labelled digraphs on $\Omega$,
  and define $k, l \in \N_{0}$, labels $x_{1}, \ldots, x_{k}, y_{1},
  \ldots, y_{l}$, and partitions
  $\{V_{1}, \ldots, V_{k}\}$ and $\{W_{1}, \ldots, W_{l}\}$ of $\Omega$
  such that
  \[
    \Equitable{\Gamma} =
    [(x_{1}, V_{1}), \ldots, (x_{k}, V_{k})]
    \ \text{and}\
    \Equitable{\Delta} =
    [(y_{1}, W_{1}), \ldots, (y_{l}, W_{l})].
  \]
  Then the following hold:
  \begin{enumerate}[label=\textrm{(\roman*)}]
    

    \item\label{item-equitable-map}
      $\Equitable{{\Gamma}^{g}} =
        [(x_{1}, V_{1}^{g}), \ldots, (x_{k}, V_{k}^{g})]$
        for all $g \in \Sym{\Omega}$.

    \item\label{item-equitable-isos}
      $\Iso{\Gamma}{\Delta}
        \begin{cases}
          = \varnothing,
          \quad\text{if}\ k \neq l,
          \ \text{or}\ k = l\ \text{and}\ x_{i} \neq y_{i}
          \ \text{for some}\ i,   \\
          \subseteq
            \set{g \in \Sym{\Omega}}
                {[V_{1}^{g}, \ldots, V_{k}^{g}] = [W_{1}, \ldots, W_{k}]},
           \quad\text{otherwise}.
        \end{cases}$
  \end{enumerate}
\end{lemma}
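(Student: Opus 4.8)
The plan is to prove part~\ref{item-equitable-map} first, as part~\ref{item-equitable-isos} then follows almost formally. The content of part~\ref{item-equitable-map} is that \EVLA{} is equivariant under the action of $\Sym{\Omega}$: replacing $\Gamma$ by $\Gamma^{g}$ transports every cell of the equitable labelling by $g$ but leaves the labels themselves untouched. I would establish this by induction on the rounds of \EVLA{}. For the base case, the initial vertex labelling satisfies $\labelFunc^{g}(\alpha^{g}) = \labelFunc(\alpha)$ for every $\alpha \in \Omega$ by the definition of the action on labelled digraphs, so $\alpha^{g}$ in $\Gamma^{g}$ starts with exactly the label that $\alpha$ carries in $\Gamma$. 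For the inductive step, the label that a single refinement round assigns to a vertex is determined solely by its current label together with the out- and in-neighbour counts appearing in Definition~\ref{defn-equitable} (counts of neighbours of each label reached via arcs of each label). Because $g$ induces an isomorphism from $\Gamma$ to $\Gamma^{g}$, it matches the labelled neighbourhood of $\alpha$ in $\Gamma$ with that of $\alpha^{g}$ in $\Gamma^{g}$ and hence preserves all of these counts; the round therefore gives $\alpha^{g}$ in $\Gamma^{g}$ the same refined label it gives $\alpha$ in $\Gamma$. Finally, since the labels produced are identical and are distinct across cells, the sort by first component used to define $\equitableFunc$ yields the same ordering in both cases, with only the cells moved by $g$; this is exactly the asserted identity $\Equitable{\Gamma^{g}} = [(x_{1}, V_{1}^{g}), \ldots, (x_{k}, V_{k}^{g})]$.

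For part~\ref{item-equitable-isos}, I would take an arbitrary $g \in \Iso{\Gamma}{\Delta}$, so that $\Gamma^{g} = \Delta$, and apply part~\ref{item-equitable-map} to obtain $\Equitable{\Delta} = \Equitable{\Gamma^{g}} = [(x_{1}, V_{1}^{g}), \ldots, (x_{k}, V_{k}^{g})]$. Since $\equitableFunc$ is a well-defined function, this list must coincide with the given value $[(y_{1}, W_{1}), \ldots, (y_{l}, W_{l})]$, and equality of finite lists forces $k = l$ together with $x_{i} = y_{i}$ and $V_{i}^{g} = W_{i}$ for every $i$. Taking the contrapositive of the first two conclusions shows that no such $g$ can exist when $k \neq l$ or $x_{i} \neq y_{i}$ for some $i$, giving $\Iso{\Gamma}{\Delta} = \varnothing$ in that case; in the remaining case the conclusion $V_{i}^{g} = W_{i}$ for all $i$ says precisely that $g \in \set{g \in \Sym{\Omega}}{[V_{1}^{g}, \ldots, V_{k}^{g}] = [W_{1}, \ldots, W_{k}]}$, which is the claimed containment.

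The main obstacle is that the induction in part~\ref{item-equitable-map} has to be run against the precise specification of \EVLA{} (Algorithm~4.8 of~\cite{directorscut}), which is not reproduced in this excerpt; the delicate point is to verify that every new label \EVLA{} invents depends only on isomorphism-invariant data and never on the identities of the vertices, so that relabelling by $g$ genuinely leaves each label fixed rather than merely permuting the label set. Once this invariance of the generic refinement step, and of the final sorting, is secured, both the induction and the deduction of part~\ref{item-equitable-isos} are routine, which is consistent with the paper describing the proof as mathematically straightforward.
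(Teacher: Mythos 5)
Your proof is correct. Note that there is no in-paper proof to compare against: the paper explicitly omits the proof of this lemma as ``mathematically straightforward,'' deferring details to the extended version~\cite{directorscut}. Your argument is the natural one that the authors are alluding to~--~equivariance of \EVLA{} under the $\Sym{\Omega}$-action, proved by induction on refinement rounds, followed by the purely formal deduction of part~\ref{item-equitable-isos} from part~\ref{item-equitable-map}~--~and the delicate point you correctly isolate (that every new label must be a function of isomorphism-invariant data only, never of vertex identities) is precisely the issue the paper itself flags in the ``note of warning'' of Example~\ref{ex-equitable-iso}.
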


By choosing meaningful new vertex labels as described, we can distinguish more
pairs of labelled digraphs as non-isomorphic via
Lemma~\ref{lem-equitable}\ref{item-equitable-isos}
than we can by defining new labels arbitrarily.
The next example illustrates this principle.

\begin{example}\label{ex-equitable-iso}
  Let $\Gamma$ be the labelled digraph on $\Omega$ with all possible
  arcs, and let $\Delta$ be the labelled digraph on $\Omega$ without arcs,
  where every vertex and arc in $\Gamma$ and $\Delta$ has the label $x$, for
  some arbitrary but fixed label $x \in \labelSet$.

  Then we may use \EVLA{} to deduce that $\Gamma$ and $\Delta$ are
  non-isomorphic, even
  though both are regular (i.e.\ every vertex has a common number of
  in-neighbours, and a common number of out-neighbours).
  The new labels encode that each vertex in $\Gamma$ and $\Delta$ has $|\Omega|$
  in- and out-neighbours, or zero in- or out-neighbours,
  respectively.  Therefore, the labels given by $\Equitable{\Gamma}$ and
  $\Equitable{\Delta}$ are different, and so $\Gamma$ and $\Delta$ are
  non-isomorphic by Lemma~\ref{lem-equitable}\ref{item-equitable-isos}.

  \textit{A note of warning:} the choice of new labels plays a role!
  If new labels were instead, say, chosen to be incrementally increasing
  integers starting at $1$, then we would have $\Equitable{\Gamma} =
  \Equitable{\Delta}$, and the above deduction would not be possible.
\end{example}

In the previous example it is obvious to us that the digraphs are non-isomorphic,
but for many more complicated
examples, Lemma~\ref{lem-equitable}\ref{item-equitable-isos} can still be used
to detect less obvious non-isomorphism.

\subsubsection{Strong and weak approximations via equitable labelled digraphs}

We describe two strategies for using \EVLA{} (which operates
on labelled digraphs) to approximate isomorphisms and fixed
points of \emph{stacks} of labelled digraphs.
In the first approach, we first combine the entries of a stack into a single
digraph, namely the squashed labelled digraph of the stack, and then apply
\EVLA;
in the other, we first apply \EVLA{} to each of the entries in the stack,
and then combine the information that we obtain.
We call these approaches \emph{strong} and \emph{weak equitable approximation},
respectively, and we give an example of their use in
Section~\ref{sec-approx-comparison}.

\begin{definition}[Strong equitable approximation]\label{defn-strong-approx}
  We define functions \(\approxStrong\) and \(\fixedStrong\) as follows.
  Let $\stackS, \stackT \in \Stacks{\Omega}$. Then there exist
  $k, l \in \N_{0}$, labels $x_{1}, \ldots, x_{k}$,
  $y_{1},
  \ldots, y_{l}$,
  and partitions $\{V_{1}, \ldots, V_{k}\}$ and $\{W_{1}, \ldots, W_{l}\}$
  of $\Omega$ such that
  \begin{align*}
    \Equitable{\Squash{\stackS}} & =
    [(x_{1}, V_{1}), \ldots, (x_{k}, V_{k})],
    \ \text{and}                     \\
    \Equitable{\Squash{\stackT}} & =
    [(y_{1}, W_{1}), \ldots, (y_{l}, W_{l})].
  \end{align*}
  Let $G$ denote the stabiliser of the list $[V_{1}, \ldots, V_{k}]$ in
  $\Sym{\Omega}$,
  and define
  \[
    \approxStrong(\stackS,\stackT) =
    \begin{cases}
      G \cdot h
                  & \text{if\ }
                    |\stackS| = |\stackT|,\
                    k = l,
                    \text{\ and for all}\ i,\,
                    x_{i} = y_{i} \ \text{and}\ |V_{i}| = |W_{i}|; \\
      \varnothing & \text{otherwise,}
    \end{cases}
  \]
  where $h \in \Sym{\Omega}$ is any permutation such that
  $V_{i}^{h} = W_{i}$ for all $i \in \{1,\ldots,k\}$. This is well-defined because, for all $g, h
  \in \Sym{\Omega}$, we have that $V_{i}^{g} = V_{i}^{h}$ for all $i$ if and only if $g$ and
  $h$ represent the same right coset of $G$ in $\Sym{\Omega}$.
  Finally, we define
  \[\fixedStrong(\stackS) = [v_{i_{1}}, \ldots, v_{i_{m}}],\]
  where $i_{1} < \cdots < i_{m}$ and
  the sets $V_{i_{j}} = \{v_{i_{j}}\}$ for each $j \in \{1,\ldots,m\}$ are
  exactly the singletons amongst $V_{1}, \ldots, V_{k}$.
\end{definition} 

\begin{definition}[Weak equitable approximation]\label{defn-weak-approx}
  We define functions \(\approxWeak\) and \(\fixedWeak\) as follows.
  Let $\stackS, \stackT \in \Stacks{\Omega}$.  For each $i \in
    \{1, \ldots, |\stackS|\}$, $j \in \{1, \ldots, |\stackT|\}$,
  there exist $k_{i}, l_{j} \in \N_{0}$, labels $x_{i,1}, \ldots, x_{i,k_{i}},
  y_{j,1}, \ldots, y_{j,l_{j}}$, and partitions
  $\{V_{i,1}, \ldots, V_{i,k_{i}}\}$ and $\{W_{j,1}, \ldots, W_{j,l_{j}}\}$
  of $\Omega$
  such that
  \begin{align*}
    \Equitable{\stackS[i]} & =
    [(x_{i, 1}, V_{i, 1}), \ldots, (x_{i, k_{i}}, V_{i, k_{i}})],
    \ \text{and}               \\
    \Equitable{\stackT[j]} & =
    [(y_{j, 1}, W_{j, 1}), \ldots, (y_{j, l_{j}}, W_{j, l_{j}})].
  \end{align*}

  If $|\stackS| \neq |\stackT|$,
  or else if $k_{i} \neq l_{i}$ for some $i \in \{1, \ldots, |\stackS|\}$,
  or else if $x_{i, j} \neq y_{i, j}$ for some $i \in \{1, \ldots, |\stackS|\}$
  and $j \in \{1, \ldots, k_{i}\}$, then we define
  $\approxWeak(\stackS,\stackT) = \varnothing$.

  Suppose otherwise.
  We define functions $f$ and $g$ that map vertices to lists
  of length \(|S|\) with entries in $\N$.
  For each $\alpha \in \Omega$,
  the list entry $f(\alpha)[i]$
  is the unique $j \in \{1, \ldots, k_{i}\}$ such that $\alpha \in V_{i, j}$,
  and $g(\alpha)[i]$
  is the unique $j \in \{1, \ldots, k_{i}\}$ such that $\alpha \in W_{i, j}$.
  Thus $f$ and $g$ encode the
  `equitable' label of a vertex in each entry of $\stackS$ and $\stackT$, respectively.
  Then we partition $\Omega$ into subsets $A_{1}, \ldots, A_{m}$ according to, and
  ordered lexicographically by, $f$-value, and similarly we partition $\Omega$
  into subsets $B_{1}, \ldots, B_{n}$ via $g$.

  Given all of this, we let $G$ denote the stabiliser of $[A_{1}, \ldots, A_{m}]$
  in $\Sym{\Omega}$ and define
  \[
    \approxWeak(\stackS,\stackT) =
    \begin{cases}
      G \cdot h
                  & \text{if}\
                    |\stackS| = |\stackT|,\ m = n,\ \text{and for all}\ i, \\
                  & \quad |A_{i}| = |B_{i}|\
                    \text{and}\ f(\min(A_{i})) = g(\min(B_{i})), \\
      \varnothing & \text{otherwise,}
    \end{cases}
  \]
  where $h \in \Sym{\Omega}$ is any permutation such that
  $A_{i}^{h} = B_{i}$ for all \(i \in \{1,\ldots,m\}\),
  and $\min(A_{i})$ is the minimum vertex in $A_{i}$
  with respect to the ordering of $\Omega$. 
  We also define
  \[\fixedWeak(\stackS) = [a_{i_{1}}, \ldots, a_{i_{t}}],\]
  where $i_{1} < \cdots < i_{t}$ and the sets $A_{i_{j}} =
  \{a_{i_{j}}\}$ for each $j \in \{1,\ldots,t\}$ are exactly the singletons
  amongst $A_{1}, \ldots, A_{m}$.
\end{definition}

The following lemma holds by Lemma~\ref{lem-equitable}.

\begin{lemma}\label{lem-weak-strong-approx}
  The functions
  from Definitions~\ref{defn-strong-approx} and~\ref{defn-weak-approx},
  $\approxStrong$ and $\approxWeak$,
  and $\fixedStrong$ and $\fixedWeak$,
  are isomorphism and fixed-point approximators,
  respectively.
\end{lemma}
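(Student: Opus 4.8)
The plan is to verify, for each of the four functions, the defining conditions from Definition~\ref{defn-approx-iso} (the three conditions for the isomorphism approximators $\approxStrong$ and $\approxWeak$) and from Definition~\ref{defn-approx-fixed} (the two conditions for the fixed-point approximators $\fixedStrong$ and $\fixedWeak$), including that each isomorphism approximator takes values in $\{\varnothing\}$ together with the right cosets of subgroups. The single tool doing the real work is Lemma~\ref{lem-equitable}: part~\ref{item-equitable-isos} forces the structure of $\Iso{\Gamma}{\Delta}$ from the outputs of $\equitableFunc$, while part~\ref{item-equitable-map} records how those outputs transform under the $\Sym{\Omega}$-action. In the strong case I would additionally use Lemma~\ref{lem-squash-same-iso} (so that $\Iso{\stackS}{\stackT} = \Iso{\Squash{\stackS}}{\Squash{\stackT}}$, and hence $\Auto{\stackS} = \Auto{\Squash{\stackS}}$) to reduce everything to a single labelled digraph; in the weak case I would use Remark~\ref{rmk-stack-iso-auto} to decompose $\Iso{\stackS}{\stackT}$ entrywise and then reassemble the information.

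For $\approxStrong$, write $\Equitable{\Squash{\stackS}} = [(x_1, V_1), \ldots, (x_k, V_k)]$ and $\Equitable{\Squash{\stackT}} = [(y_1, W_1), \ldots, (y_l, W_l)]$ as in Definition~\ref{defn-strong-approx}. If $\Iso{\stackS}{\stackT} = \Iso{\Squash{\stackS}}{\Squash{\stackT}}$ is non-empty, then Lemma~\ref{lem-equitable}\ref{item-equitable-isos} gives $k = l$, $x_i = y_i$ for all $i$, and $V_i^g = W_i$ for every such $g$; in particular $|V_i| = |W_i|$ and $|\stackS| = |\stackT|$, so the ``$G \cdot h$'' branch is taken, and since $V_i^g = W_i = V_i^h$ the well-definedness statement in Definition~\ref{defn-strong-approx} yields $g \in G h$. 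This proves condition~\ref{item-approx-true-overestimate}; condition~\ref{item-approx-different-lengths} is the branch guard; and taking $\stackT = \stackS$ (so $h = \idOmega$) shows $\approxStrong(\stackS) = G$, a subgroup, whence $G \cdot h$ is a right coset and condition~\ref{item-approx-right-coset-of-aut} holds. For $\fixedStrong$, the group $\Auto{\stackS} = \Auto{\Squash{\stackS}}$ stabilises each cell $V_i$ and hence fixes each singleton cell pointwise by Lemma~\ref{lem-equitable}\ref{item-equitable-isos}, giving Definition~\ref{defn-approx-fixed}\ref{item-fixed}; and since $\Squash{\stackS}^g = \Squash{\stackS^g}$, Lemma~\ref{lem-equitable}\ref{item-equitable-map} shows the cells of $\stackS^g$ are the $V_i^g$ in the same order, so singletons map to singletons at the same positions and $\fixedStrong(\stackS)^g = \fixedStrong(\stackS^g)$, giving~\ref{item-fixed-invariant}.

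For the weak functions I would run the same argument one stack-entry at a time and then combine. Given $g \in \Iso{\stackS}{\stackT} = \bigcap_i \Iso{\stackS[i]}{\stackT[i]}$, Lemma~\ref{lem-equitable}\ref{item-equitable-isos} applied to each entry gives $k_i = l_i$, $x_{i,j} = y_{i,j}$, and $V_{i,j}^g = W_{i,j}$. Writing $f$ and $f'$ for the vertex-to-list functions of $\stackS$ and $\stackT$ (denoted $f$ and $g$ in Definition~\ref{defn-weak-approx}; I rename the latter to avoid a clash with the permutation $g$), this says exactly that $f(\alpha) = f'(\alpha^g)$ for all $\alpha \in \Omega$, so $g$ carries each $f$-cell onto the $f'$-cell with the same list-value. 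Because the $A_i$ and the $B_i$ are each ordered lexicographically by that list-value, and $g$ is a bijection identifying the realised $f$-values with the realised $f'$-values, we get $m = n$, $A_i^g = B_i$, $|A_i| = |B_i|$, and $f(\min A_i) = f'(\min B_i)$; hence the ``$G \cdot h$'' branch is taken and $A_i^g = A_i^h$ gives $g \in G h$. The remaining conditions for $\approxWeak$ follow as before (with $\stackT = \stackS$, $h = \idOmega$, so $\approxWeak(\stackS) = G$), and $\fixedWeak$ is handled exactly as $\fixedStrong$ once one checks, via Lemma~\ref{lem-equitable}\ref{item-equitable-map} applied entrywise, that the combined cells of $\stackS^g$ are the $A_i^g$ in the same lexicographic order.

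I expect the only genuine obstacle to be the bookkeeping in the weak case: confirming that translating the per-entry equitable partitions into the combined vertex-to-list partition, and then sorting the cells lexicographically, is compatible on $\stackS$ and $\stackT$ simultaneously (for the overestimate condition) and under the $\Sym{\Omega}$-action (for fixed-point invariance). The potential pitfalls — that the list-values realised on $\stackS$ and on $\stackT$ coincide, and that the induced cell orderings therefore agree — are precisely what Lemma~\ref{lem-equitable} controls, while the existence of the representative $h$ and the independence of the coset $G\cdot h$ from its choice are already established inside Definitions~\ref{defn-strong-approx} and~\ref{defn-weak-approx}, so they need only be cited.
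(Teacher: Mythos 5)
Your proposal is correct and follows the same route as the paper, which proves this lemma simply by appeal to Lemma~\ref{lem-equitable}: your write-up is exactly the detailed verification that citation leaves implicit, using Lemma~\ref{lem-equitable}\ref{item-equitable-isos} for the overestimate/coset conditions and Lemma~\ref{lem-equitable}\ref{item-equitable-map} for fixed-point invariance, bridged by Lemma~\ref{lem-squash-same-iso} in the strong case and Remark~\ref{rmk-stack-iso-auto} in the weak case. All the key points check out, including the two delicate ones you flag: that $V_i^g = W_i = V_i^h$ (resp.\ $A_i^g = B_i = A_i^h$) forces $g \in G h$, and that the lexicographic ordering of cells by list-value is respected under both the comparison of $\stackS$ with $\stackT$ and the $\Sym{\Omega}$-action.
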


\subsection{Comparing approximators}\label{sec-approx-comparison}

In this section, we give a simple example to compare the isomorphism
approximators from Sections~\ref{sec-exact-approx} and~\ref{sec-equitable}.  We
present the example in more detail in~\cite{directorscut}.

Weak equitable approximations should be the least accurate but cheapest to
compute, whereas computing isomorphisms exactly should be the most
expensive.
Weak equitable approximation distinguishes vertices by
distinguishing them in the individual entries of the stacks.
Strong equitable approximation sometimes
gives better results than this, because it considers the
entire stacks simultaneously.

\begin{example}\label{ex-approx}

  Let $\Gamma_{1}, \Gamma_{2}, \Delta_{1}$, and $\Delta_{2}$ be labelled
  digraphs on $\{1,\ldots,6\}$ whose arcs and arc labels are defined
  as in Figure~\ref{fig-ex-approx}, and where each vertex is labelled
  \exLabel{white}.
  We approximate the isomorphisms from the stack
  $\stackS \coloneqq [\Gamma_{1}, \Gamma_{2}]$ to the stack
  $\stackT \coloneqq [\Delta_{1}, \Delta_{2}]$.

\begin{figure}[!ht]
  \centering
  \begin{tikzpicture}
    \tikzstyle{white}=[circle, draw=black]
    \foreach \x in {1,2,3,4,5,6} {
        \node[white] (\x) at (-\x*60+120:1.5cm) {$\mathbf{\x}$};};
    \foreach \x/\y in {1/2,2/3,3/4,4/5,5/6,6/1} {\arcSym{\x}{\y}};
    \node at (0, -2.0) {$\Gamma_{1}$};
  \end{tikzpicture}
  \qquad\quad
  \begin{tikzpicture}
    \tikzstyle{white}=[circle, draw=black]
    \foreach \x in {1,2,3,4,5,6} {
        \node[white] (\x) at (-\x*60+120:1.5cm) {$\mathbf{\x}$};};
    \foreach \x/\y in {1/2,3/6,4/5} {\arcSymDash{\x}{\y}};
    \node at (0, -2.0) {$\Gamma_{2}$};
  \end{tikzpicture}
  \qquad\quad
  \begin{tikzpicture}
    \tikzstyle{white}=[circle, draw=black]
    \foreach \x in {1,2,3,4,5,6} {
        \node[white] (\x) at (-\x*60+120:1.5cm) {$\mathbf{\x}$};};
    \foreach \x/\y in {2/3,3/4,5/6,6/1} {\arcSym{\x}{\y}};
    \foreach \x/\y in {3/6} {\arcSymDash{\x}{\y}};
    \foreach \x/\y in {1/2,4/5} {\arcSymDot{\x}{\y}};
    \node at (0, -2.0) {$\Squash{[\Gamma_{1}, \Gamma_{2}]}$};
  \end{tikzpicture}

  \vspace{4mm}

  \begin{tikzpicture}
    \tikzstyle{white}=[circle, draw=black]
    \foreach \x in {1,2,3,4,5,6} {
        \node[white] (\x) at (-\x*60+120:1.5cm) {$\mathbf{\x}$};};
    \foreach \x/\y in {6/4,4/5,5/3,3/2,2/1,1/6} {\arcSym{\x}{\y}};
    \node at (0, -2.0) {$\Delta_{1}$};
  \end{tikzpicture}
  \qquad\quad
  \begin{tikzpicture}
    \tikzstyle{white}=[circle, draw=black]
    \foreach \x in {1,2,3,4,5,6} {
        \node[white] (\x) at (-\x*60+120:1.5cm) {$\mathbf{\x}$};};
    \foreach \x/\y in {6/4,5/1,3/2} {\arcSymDash{\x}{\y}};
    \node at (0, -2.0) {$\Delta_{2}$};
  \end{tikzpicture}
  \qquad\quad
  \begin{tikzpicture}
    \tikzstyle{white}=[circle, draw=black]
    \foreach \x in {1,2,3,4,5,6} {
        \node[white] (\x) at (-\x*60+120:1.5cm) {$\mathbf{\x}$};};
    \foreach \x/\y in {4/5,5/3,2/1,1/6} {\arcSym{\x}{\y}};
    \foreach \x/\y in {5/1} {\arcSymDash{\x}{\y}};
    \foreach \x/\y in {6/4,3/2} {\arcSymDot{\x}{\y}};
    \node at (0, -2.0) {$\Squash{[\Delta_{1}, \Delta_{2}]}$};
  \end{tikzpicture}
  \caption[IsoApprox]{\label{fig-ex-approx}
  Pictures of the labelled digraphs on $\{1,\ldots,6\}$ from
  Example~\ref{ex-approx}.
  Each arc in $\Gamma_{1}, \Gamma_{2}, \Delta_{1}$, and $\Delta_{2}$ is
  labelled \exLabel{solid} or \exLabel{dashed} according to its depiction.
  Every vertex in $\Squash{[\Gamma_{1}, \Gamma_{2}]}$ and $\Squash{[\Delta_{1},
  \Delta_{2}]}$ has the same label $[\exLabel{white}, \exLabel{white}]$; arcs
  with label $[\exLabel{solid}, \#]$ are shown as \exLabel{solid}, arcs with
  label $[\#, \exLabel{dashed}]$ are shown as \exLabel{dashed}, and arcs with
  label $[\exLabel{solid}, \exLabel{dashed}]$ are shown as \exLabel{dotted}.
  }
\end{figure}
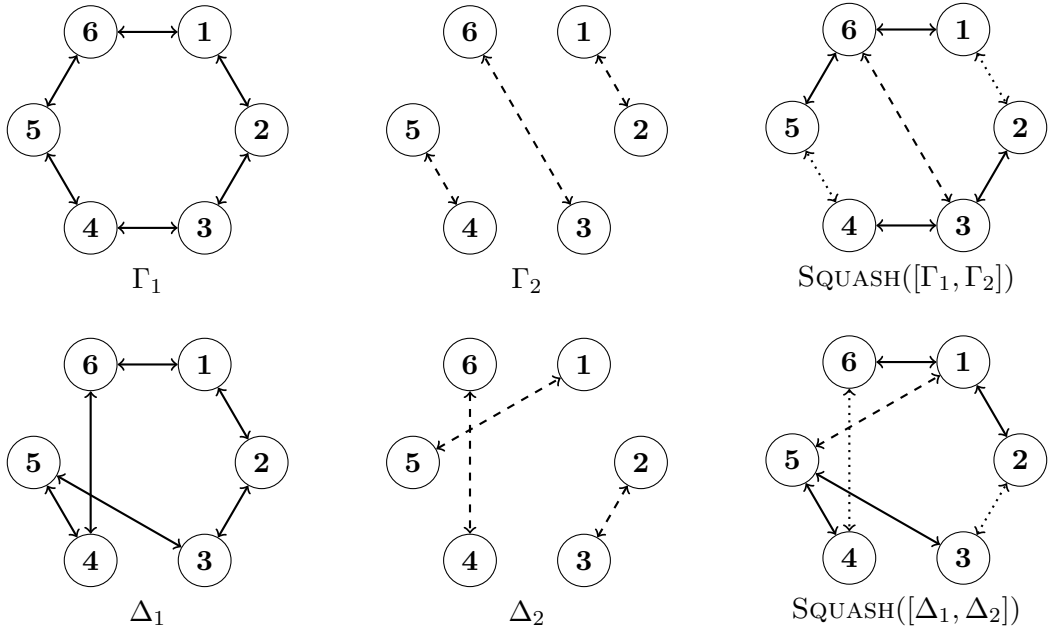

\begin{description}[leftmargin=0mm]
  \item[Weak equitable approximation.]
    The labelled digraphs $\Gamma_{1}, \Gamma_{2},
    \Delta_{1}$, and $\Delta_{2}$ are equitable, and their vertices are all \exLabel{white}.
    Therefore \EVLA{} makes no progress,
    and so weak equitable approximation gives the worst possible result
      \(
        \approxWeak(\stackS,\stackT) = \Sn{6}
      \).
      
  \item[Strong equitable approximation.]
    To see that the labelled digraphs $\Squash{\stackS}$ and
    $\Squash{\stackT}$ are not equitable, note for example that
    there are vertices in each of these digraphs with different numbers of
    out-neighbours, yet all vertices have the same label.
    There exist labels $x$ and $y$ such that the \EVLA{}
    assigns $x$ to $\{3,6\}$ and $y$ to $\{1,2,4,5\}$ in $\Squash{\stackS}$,
    and it assigns $x$ to $\{1,5\}$ and $y$ to $\{2,3,4,6\}$ in
    $\Squash{\stackT}$.

    Let $G$ be the stabiliser of $[\{3,6\}, \{1,2,4,5\}]$ in $\mathcal{S}_{6}$,
    and let $g \in \mathcal{S}_{6}$ be any permutation that maps
    $\{3,6\}$ to $\{1,5\}$ and $\{1,2,4,5\}$ to $\{2,3,4,6\}$.
    Then strong equitable approximation gives
    \(
      \approxStrong(\stackS,\stackT) =
      G \cdot h
    \).
    Note that $|\approxStrong(\stackS,\stackT)| = |G| = 2! \cdot 4! = 48$.

  \item[Canonising and computing exactly.]
    We compute with
    \textsc{Bliss}~\cite{bliss} via
    the \textsc{GAP}~\cite{GAP4} package \textsc{Digraphs}~\cite{digraphs}
    that
    $\Auto{\Squash{\stackS}}
    =
    \< (1\,2)(3\,6)(4\,5), (1\,4)(2\,5)(3\,6) \>
    \eqqcolon
    G$
    and that
    $\Squash{\stackS}^{(1\,2\,3\,5\,6)} =
    \Squash{\stackT}$.
    Thus
    \(
      \Iso{\stackS}{\stackT} =
      G \cdot (1\,2\,3\,5\,6)
    \).
    In particular,
    $|\Iso{\stackS}{\stackT}| = |G| = 4$,
    which reveals the inaccuracy of the other approximators.

\end{description}

\end{example}

\section{Distributing stack isomorphisms across new stacks}\label{sec-splitter}

In a backtrack search, when it is not clear how to further prune a
search space, we divide the search across a number
of smaller areas that can be searched more easily. We call
this process \emph{splitting}, and in this section  we define the
notion of a \emph{splitter} for labelled digraph stacks.
A splitter takes a pair of stacks that represents a (potentially large)
search space, and defines new stacks that divide the space
in a sensible way.

\begin{definition}\label{defn-splitter}
  A \emph{splitter} for an isomorphism approximator $\approxFunc$ is a function
  $\splitFunc$ that maps a pair of labelled digraph stacks on $\Omega$ to a
  finite list of stacks,
  where for all $\stackS, \stackT \in \Stacks{\Omega}$
  with $|\Approx{\stackS}{\stackT}| \geq 2$,
  \[
    \Split{\stackS}{\stackT}
    =
    [
    \stackS_{1},
    \stackT_{1},
    \stackT_{2},
    \ldots,
    \stackT_{m}
    ]
  \]
  for some $m \in \N_{0}$
  and $\stackS_{1}, \stackT_{1}, \ldots, \stackT_{m} \in \Stacks{\Omega}$, such that:
  \begin{enumerate}[label=\textrm{(\roman*)}]
    \item\label{item-splitter-union}
      $\Iso{\stackS}{\stackT}
        =
        \Iso{\stackS \mathop{\Vert} \stackS_{1}}{\stackT \mathop{\Vert} \stackT_{1}}
        \mathop{\cup}
        \cdots
        \mathop{\cup}
        \Iso{\stackS \mathop{\Vert} \stackS_{1}}{\stackT \mathop{\Vert} \stackT_{m}}$.

      \item\label{item-splitter-smaller}
      $|\Approx{\stackS \mathop{\Vert} \stackS_{1}}{\stackT \mathop{\Vert} \stackT_{i}}|
       <
       |\Approx{\stackS}{\stackT}|$ for all $i \in \{1, \ldots, m\}$.

    \item\label{item-splitter-invariant}
      For all $\stackU \in \Stacks{\Omega}$ with $|\Approx{\stackS}{\stackU}| \geq 2$,
      the first entry of \(\Split{\stackS}{\stackU}\) is \(S_1\).

  \end{enumerate}
\end{definition}

For this paragraph and the following remark, we keep the notation from Definition~\ref{defn-splitter}, with
$|\Approx{\stackS}{\stackT}| \geq 2$.
The search space corresponding to the pair $(\stackS,\stackT)$ is
$\Approx{\stackS}{\stackT}$.
By
Definition~\ref{defn-splitter}\ref{item-splitter-union}, if $|\Approx{\stackS}{\stackT}| \geq 2$,
then the splitter produces the search spaces
$\Approx{\stackS \mathop{\Vert} \stackS_{1}}{\stackT \mathop{\Vert} \stackT_{i}}$
for each $i \in \{1,\ldots,m\}$;
note that the left-hand stack \(\stackS \mathop{\Vert} \stackS_{1}\) does not
vary here.
Each one of these new search spaces is smaller than
$\Approx{\stackS}{\stackT}$, by
Definition~\ref{defn-splitter}\ref{item-splitter-smaller}.  This is required to
show that our algorithms terminate.  
Definition~\ref{defn-splitter}\ref{item-splitter-invariant} means that the first stack
given by a splitter is independent of the given right-hand stack.
This is required by the technique in Section~\ref{sec-r-base}.

\begin{remark}\label{rmk-splitter-first}
  If \(S = T\), then it follows by Definition~\ref{defn-splitter}\ref{item-splitter-union} that \(S_1 = T_i\) for some \(i\).
  Thus we may assume without loss of generality that \(S_1 = T_1\) in this case.
\end{remark}

The following lemma shows a way of giving a splitter by specifying its behaviour
on the left stack that it is given.
The proof is straightforward and therefore omitted; see~\cite{directorscut}.

\begin{lemma}\label{lem-splitter-creator}
  Let $\approxFunc$ be an isomorphism approximator, and let $f$ be any
  function from $\Stacks{\Omega}$ to itself such that, for all
  $\stackS \in \Stacks{\Omega}$:
  \[\text{if}\ |\Approx{\stackS}{}| \geq 2,
    \ \text{then}\
    |\Approx{\stackS \mathop{\Vert} f(\stackS)}{}|
    <
    |\Approx{\stackS}{}|.
  \]
  Let $\stackS, \stackT \in \Stacks{\Omega}$,
  and fix an ordering $\stackT_{1}, \ldots, \stackT_{m}$,
  for some $m \in \N_{0}$, of the set
  $\set{{f(\stackS)}^{g}}{g \in \Approx{\stackS}{\stackT}}$.
  Finally, let
  $\splitFunc_{f}(\stackS,\stackT) = [f(\stackS), \stackT_{1}, \ldots, 
  \stackT_{m}]$.
  Then $\splitFunc$ is a splitter for $\approxFunc$.
\end{lemma}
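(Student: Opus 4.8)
The plan is to verify directly that $\splitFunc_{f}$ satisfies the three requirements of Definition~\ref{defn-splitter}, writing $\stackS_{1} \coloneqq f(\stackS)$ so that $\splitFunc_{f}(\stackS,\stackT) = [\stackS_{1}, \stackT_{1}, \ldots, \stackT_{m}]$ already has the shape demanded there; note that $m \in \N_{0}$ is finite because $\Sym{\Omega}$, and hence $\approxFunc(\stackS,\stackT)$, is finite. Two preliminary observations handle most of the bookkeeping. First, since $|\approxFunc(\stackS,\stackT)| \geq 2 > 0$, Definition~\ref{defn-approx-iso}\ref{item-approx-different-lengths} forces $|\stackS| = |\stackT|$; this is what lets me later split isomorphisms of concatenated stacks via Remark~\ref{rmk-stack-iso-auto}. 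Second, because $\approxFunc(\stackS,\stackT) \neq \varnothing$, Definition~\ref{defn-approx-iso}\ref{item-approx-right-coset-of-aut} gives $\approxFunc(\stackS,\stackT) = \approxFunc(\stackS)\cdot h$ for some $h \in \Sym{\Omega}$, so $|\approxFunc(\stackS)| = |\approxFunc(\stackS,\stackT)| \geq 2$; in particular the hypothesis on $f$ is applicable to $\stackS$.

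Condition~\ref{item-splitter-invariant} is immediate: by construction the first entry of $\splitFunc_{f}(\stackS,\stackU)$ is $f(\stackS) = \stackS_{1}$ for every right-hand stack $\stackU$, since $f$ is evaluated only at the left-hand stack.

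For condition~\ref{item-splitter-union} I would use $|\stackS| = |\stackT|$ together with Remark~\ref{rmk-stack-iso-auto} to rewrite each term as $\Iso{\stackS \mathop{\Vert} \stackS_{1}}{\stackT \mathop{\Vert} \stackT_{i}} = \Iso{\stackS}{\stackT} \cap \Iso{\stackS_{1}}{\stackT_{i}}$, so the union equals $\Iso{\stackS}{\stackT} \cap \bigcup_{i} \Iso{\stackS_{1}}{\stackT_{i}}$. Each term lies in $\Iso{\stackS}{\stackT}$, hence so does the union. For the reverse inclusion, take $g \in \Iso{\stackS}{\stackT}$; by Definition~\ref{defn-approx-iso}\ref{item-approx-true-overestimate} we have $g \in \approxFunc(\stackS,\stackT)$, so $\stackS_{1}^{\,g} = f(\stackS)^{g}$ lies in the set $\set{f(\stackS)^{g'}}{g' \in \approxFunc(\stackS,\stackT)} = \{\stackT_{1},\ldots,\stackT_{m}\}$ from which the ordering was fixed. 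Thus $\stackS_{1}^{\,g} = \stackT_{i}$ for some $i$, i.e.\ $g \in \Iso{\stackS_{1}}{\stackT_{i}}$, giving the claimed equality.

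Condition~\ref{item-splitter-smaller} is the step I expect to require the most care, since the hypothesis on $f$ speaks only about one-argument approximations $\approxFunc(\cdot)$ whereas the condition concerns two-argument ones; bridging this gap is the crux. Using the already-noted bound $|\approxFunc(\stackS)| \geq 2$, the hypothesis yields $|\approxFunc(\stackS \mathop{\Vert} \stackS_{1})| < |\approxFunc(\stackS)| = |\approxFunc(\stackS,\stackT)|$. Finally, Definition~\ref{defn-approx-iso}\ref{item-approx-right-coset-of-aut} shows that $\approxFunc(\stackS \mathop{\Vert} \stackS_{1}, \stackT \mathop{\Vert} \stackT_{i})$ is either empty or a right coset of $\approxFunc(\stackS \mathop{\Vert} \stackS_{1})$, so in both cases $|\approxFunc(\stackS \mathop{\Vert} \stackS_{1}, \stackT \mathop{\Vert} \stackT_{i})| \leq |\approxFunc(\stackS \mathop{\Vert} \stackS_{1})| < |\approxFunc(\stackS,\stackT)|$, which is exactly condition~\ref{item-splitter-smaller}. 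Having established all three conditions, I conclude that $\splitFunc_{f}$ is a splitter for $\approxFunc$.
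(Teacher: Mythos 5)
Your proof is correct. The paper itself omits the proof of this lemma as ``straightforward'' (deferring to the extended version~\cite{directorscut}), and your argument is exactly the intended direct verification of the three conditions of Definition~\ref{defn-splitter}, handling the two genuine bookkeeping points properly: deducing $|\stackS|=|\stackT|$ from Definition~\ref{defn-approx-iso}\ref{item-approx-different-lengths} so that isomorphisms of the concatenated stacks factor entrywise via Remark~\ref{rmk-stack-iso-auto}, and using the coset property of Definition~\ref{defn-approx-iso}\ref{item-approx-right-coset-of-aut} to carry the hypothesis on $f$, stated only for one-argument approximations, over to the two-argument approximations appearing in condition~\ref{item-splitter-smaller}.
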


Let the notation of Lemma~\ref{lem-splitter-creator} hold.
Splitting by appending the stack $f(\stackS)$
to the stack $\stackS$ corresponds to stabilising $f(\stackS)$ in the current
approximation of $\Auto{\stackS}$; the stacks of the form
$\stackT_{i}$ give the images of the stack $f(\stackS)$ under
$\Approx{\stackS}{\stackT}$.

Note that the set $\set{{f(\stackS)}^{g}}{g \in \Approx{\stackS}{\stackT}}$
can be computed via the orbit of $f(\stackS)$ under
$\Approx{\stackS}{}$.
Indeed, if $h \in \Approx{\stackS}{\stackT}$, then
since
$\Approx{\stackS}{\stackT} = \Approx{\stackS}{} \cdot h$
by Definition~\ref{defn-approx-iso}\ref{item-approx-right-coset-of-aut}, we have
\begin{align*}
  \set{{f(\stackS)}^{g}}{g \in \Approx{\stackS}{\stackT}}
  & =
  \set{{f(\stackS)}^{g}}{g \in \Approx{\stackS}{} \cdot h} \\
  & =
  \set{{f(\stackS)}^{x}}{x \in \Approx{\stackS}{}}^{h}
  =
  {\left({f(\stackS)}^{\Approx{\stackS}{}}\right)}^{h}.
\end{align*}

In the following definition, we present a splitter that can
be obtained with Lemma~\ref{lem-splitter-creator}.
We use a version of this splitter for our experiments in
Section~\ref{sec-experiments}.
\begin{definition}[Fixed point splitter]\label{defn-point-splitter}
  For all $\alpha \in \Omega$, let $\Gamma_{\alpha} = (\Omega,
  \varnothing, \labelFunc)$ be the labelled digraph on $\Omega$ where
  $\labelFunc(\alpha) =
  1$ and $\labelFunc(\beta) = 0$ for all $\beta \in \Omega \setminus
  \{\alpha\}$.
  Note that $\Gamma_{\alpha}^{g} = \Gamma_{\alpha^{g}}$ for all $g \in
  \Sym{\Omega}$.
  Let $\approxFunc$ be any isomorphism approximator such that
  $\Approx{\stackU \mathop{\Vert} [\Gamma_{\alpha}]}{}
  \leq \Approx{\stackU}{} \cap \set{g \in \Sym{\Omega}}{\alpha^{g} =
  \alpha}$
  for all $\alpha \in \Omega$ and $\stackU \in \Stacks{\Omega}$.
  We define a function $\sigma$ from $\Stacks{\Omega}$ to itself by
  \[
    \sigma(\stackS) =
    \begin{cases}
        \EmptyStack{\Omega}
        &
        \text{if}\ |\Approx{\stackS}{}| \leq 1, \\
        [\Gamma_{\alpha}]
        &
        \text{otherwise, where} \
        \alpha \coloneqq
        \min\{\min(\mathcal{O})\,:\, \mathcal{O}\ \text{is an orbit of} \\
        &
        \qquad
        \Approx{\stackS}{}\
        \text{of minimal size, subject to}\ 
        |\mathcal{O}| \geq 2 \}.
    \end{cases}
  \]
  for all $\stackS \in \Stacks{\Omega}$.
  Finally, define $\splitFunc_{\sigma}$ as in Lemma~\ref{lem-splitter-creator},
  for the isomorphism approximator $\approxFunc$ and the function $\sigma$.
\end{definition}

The following corollary holds by Lemma~\ref{lem-splitter-creator}, bearing in mind that
the function $\sigma$ has the required property by our
choice of isomorphism approximator in
Definition~\ref{defn-point-splitter}.

\begin{cor}\label{cor-point-splitter}
  The function $\splitFunc_{\sigma}$ from Definition~\ref{defn-point-splitter} is a splitter for
  any isomorphism approximator that satisfies the condition in
  Definition~\ref{defn-point-splitter}.
\end{cor}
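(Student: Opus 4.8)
The plan is to invoke Lemma~\ref{lem-splitter-creator}, which reduces the whole claim to a single verification about the function $\sigma$. Since $\splitFunc_{\sigma}$ was constructed from $\sigma$ exactly as prescribed in that lemma, it suffices to check that $\sigma$ satisfies the lemma's hypothesis: for every $\stackS \in \Stacks{\Omega}$ with $|\Approx{\stackS}{}| \geq 2$, we must have $|\Approx{\stackS \mathop{\Vert} \sigma(\stackS)}{}| < |\Approx{\stackS}{}|$. Everything else --- the coset-union property~\ref{item-splitter-union}, the strict shrinking~\ref{item-splitter-smaller}, and the independence~\ref{item-splitter-invariant} of the first stack from the right-hand argument --- is then supplied automatically by Lemma~\ref{lem-splitter-creator}.

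First I would fix a stack $\stackS$ with $|\Approx{\stackS}{}| \geq 2$ and write $G \coloneqq \Approx{\stackS}{}$, which is a subgroup of $\Sym{\Omega}$ by the discussion following Definition~\ref{defn-approx-iso}. Since $|G| \geq 2$, the group $G$ contains a non-identity element, which necessarily moves some point of $\Omega$; hence $G$ has at least one orbit of size at least $2$. This guarantees that the point $\alpha$ defined in Definition~\ref{defn-point-splitter} exists, so that $\sigma(\stackS) = [\Gamma_{\alpha}]$, and moreover that $\alpha$ lies in a $G$-orbit of size at least $2$. (The precise recipe for choosing $\alpha$ among the smallest non-trivial orbits is irrelevant to the argument; all I need is that some such $\alpha$ is selected.)

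Next I would apply the defining property of the isomorphism approximator $\approxFunc$ from Definition~\ref{defn-point-splitter}, taken with $\stackU = \stackS$, namely
\[
  \Approx{\stackS \mathop{\Vert} [\Gamma_{\alpha}]}{}
  \leq
  \Approx{\stackS}{} \cap \set{g \in \Sym{\Omega}}{\alpha^{g} = \alpha}
  = \Stab_{G}(\alpha),
\]
so that $\Approx{\stackS \mathop{\Vert} \sigma(\stackS)}{}$ is contained in the point stabiliser of $\alpha$ in $G$. The orbit--stabiliser theorem, together with the fact that $\alpha$ lies in an orbit of size at least $2$, then gives $|\Stab_{G}(\alpha)| = |G|/|\alpha^{G}| \leq |G|/2 < |G|$, whence $|\Approx{\stackS \mathop{\Vert} \sigma(\stackS)}{}| \leq |\Stab_{G}(\alpha)| < |G| = |\Approx{\stackS}{}|$, exactly as required. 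Feeding this into Lemma~\ref{lem-splitter-creator} immediately produces the conclusion that $\splitFunc_{\sigma}$ is a splitter.

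I expect the only genuinely delicate point to be the well-definedness established in the second paragraph: one must confirm that a non-trivial approximating subgroup $G$ always has an orbit of size at least $2$, so that the minimum defining $\alpha$ actually exists and $\sigma(\stackS)$ is meaningful precisely on the stacks where the splitter must act. Once $\alpha$ is in hand, the strict decrease in cardinality is a routine consequence of the approximator's stabiliser property combined with orbit--stabiliser, and no further estimates are needed.
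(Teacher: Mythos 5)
Your proposal is correct and takes essentially the same route as the paper, whose proof consists of citing Lemma~\ref{lem-splitter-creator} and noting that $\sigma$ has the required property by the condition imposed on the approximator in Definition~\ref{defn-point-splitter}. You simply make explicit the details the paper leaves implicit: that $|\Approx{\stackS}{}| \geq 2$ forces an orbit of size at least two (so $\alpha$ exists), and that the stabiliser containment plus orbit--stabiliser gives the strict decrease $|\Approx{\stackS \mathop{\Vert} \sigma(\stackS)}{}| < |\Approx{\stackS}{}|$.
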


\section{The search algorithm}\label{sec-search}

Let $U_{1}, \ldots, U_{k} \subseteq \Sym{\Omega}$.
In this section, we present our main algorithms, which combine the tools of
Sections~\ref{sec-stacks}--\ref{sec-splitter} to compute
the intersection $U_{1} \cap \cdots \cap U_{k}$.
In Section~\ref{sec-basic-search},
we show how to perform a backtrack search
for one or all of the elements of $U_{1} \cap \cdots \cap U_{k}$.
In Section~\ref{sec-generators},
when the result is known to form a group,
we show how to search for a base
and strong generating set instead
(see~\cite[p.~101]{dixonmortimer} for a definition).
We explain these algorithms in further detail
in~\cite{directorscut}.
A version of our algorithms is implemented in the
\textsc{GraphBacktracking} package~\cite{GAPpkg} for \textsc{GAP}~\cite{GAP4}.

\subsection{The basic method}\label{sec-basic-search}

We begin with a high-level description of Algorithm~\ref{alg-basic-search},
which comprises the \Call{Search}{} and \Call{Refine}{} procedures.
We say that an algorithm \emph{backtracks} when it finishes executing a
recursive call to a procedure, and continues executing from
where the call was initiated.

\begin{algorithm}[!ht]
  \caption{A recursive algorithm using labelled digraph stacks to search
    in $\Sym{\Omega}$.}\label{alg-basic-search}

  \begin{algorithmic}[1]

    \item[\textbf{Input:}]
    a sequence of subsets
    $U_{1}, \ldots, U_{k} \subseteq \Sym{\Omega}$;

    a sequence $(f_{L,1}, f_{R,1}), \ldots, (f_{L,m}, f_{R,m})$, where
    each pair is a refiner for some $U_{j}$;

    an isomorphism approximator $\approxFunc$ and a splitter $\textsc{Split}$
    for $\approxFunc$.

    \item[\textbf{Output:}]
    all elements of the intersection $U_{1} \cap \cdots \cap U_{k}$,
    which we refer to as \emph{solutions}.

    \vspace{2mm}
    \Procedure{Search}{$\stackS, \stackT$}
    \Comment{The main recursive search procedure
    (Lemma~\ref{lem-alg-basic-search-is-correct}).}\label{line-search-proc-start}

    \State{\((\stackS, \stackT) \gets \Call{Refine}{\stackS, \stackT}\)}
    \Comment{Refine the given stacks.}\label{line-refine}

    \Case{$\Approx{\stackS}{\stackT} = \varnothing$}
      \Comment{Nothing found in the present branch: backtrack.}\label{line-case-empty}
      \State{\Return{\(\varnothing\)}}\label{line-no-isos}
    \EndCase{}

    \Case{$\Approx{\stackS}{\stackT} = \{h\}$ for some $h$}
      \Comment{$h$ is the sole potential solution
      here.}\label{line-case-one-candidate}
      \If{
          $\stackS^{h} = \stackT$ and
          $h \in U_{1} \cap \cdots \cap U_{k}$}\label{line-check-perm}
      \State{\Return{$\{h\}$}}
      \Comment{$h$ is the unique solution in
        $\Iso{\stackS}{\stackT}$: backtrack.}\label{line-single-solution}
      \Else{}
      \State{\Return{$\varnothing$}}
      \Comment{$h$ is not a solution in
        $\Iso{\stackS}{\stackT}$: backtrack.}\label{line-not-solution}
      \EndIf{}
    \EndCase{}

    \Case{$|\Approx{\stackS}{\stackT}| \geq 2$}
      \Comment{Multiple potential solutions.}\label{line-multiple-candidates}

      \State{\([S_1, T_1, \ldots, T_t] \gets \Call{Split}{\stackS, \stackT}\)}
      \Comment{Split the search space.}\label{line-split}

      \State{\Return{\(
        \displaystyle\bigcup_{i \in \{1,\ldots,t\}}
        \Call{Search}{\stackS \mathop{\Vert} \stackS_{1}, \stackT \mathop{\Vert} \stackT_{i}}\)}}
      \Comment{Search recursively.}\label{line-recurse}
    \EndCase{}

    \EndProcedure{}

    \vspace{1mm}
    \Procedure{Refine}{$\stackS, \stackT$}
    \Comment{Attempt to prune the search space
    (Lemma~\ref{lem-alg-refine-is-correct}).}\label{line-refine-proc-start}

    \While{\(\Approx{\stackS}{\stackT} \neq
    \varnothing\)}
    \Comment{Proceed while there are potential
    solutions.}\label{line-while-begin}\label{line-refine-empty}

    \State{\((\stackS', \stackT') \gets
      (\stackS, \stackT)\)}
    \Comment{Save the stacks before the next round of
    refining.}\label{line-refine-store}

    \For{\(i \in \{1, \ldots, m\}\) \textbf{and while} \(|\stackS| =
    |\stackT|\)}\label{line-refine-loop}
      \State{$(\stackS, \stackT) \gets
          (\stackS \mathop{\Vert} f_{L,i}(\stackS),
          \stackT \mathop{\Vert} f_{R,i}(\stackT))$}
      \Comment{Apply each refiner in turn.}\label{line-apply-refiner}
    \EndFor{}

    \If{\(|\Approx{\stackS}{\stackT}| \not<
      |\Approx{\stackS'}{\stackT'}|\)}\label{line-refine-not-smaller}

      \State{\Return{\((\stackS', \stackT')\)}}
      \Comment{Stop: the last refinements seemingly made no
        progress.}\label{line-refine-loop-end}

    \EndIf{}

    \EndWhile{}

    \State{\Return{\((\stackS, \stackT)\)}}\Comment{Stop:
    $\Approx{\stackS}{\stackT} = \varnothing$: no solutions in this
    branch.}\label{line-refine-return-empty}

    \EndProcedure{}

    \vspace{1mm}
    \State{\Return{\Call{Search}{$\EmptyStack{\Omega},
            \EmptyStack{\Omega}$}}}\label{line-start-search}

  \end{algorithmic}
\end{algorithm}

The algorithm begins with a call to the \Call{Search}{} procedure
on line~\ref{line-start-search}.
This procedure,
when given labelled digraph stacks $\stackS$ and $\stackT$,
finds those elements of 
$U_{1} \cap \cdots \cap U_{k}$
that induce isomorphisms from $\stackS$ to $\stackT$
(Lemma~\ref{lem-alg-basic-search-is-correct}).
It does so by searching in
$\Approx{\stackS}{\stackT}$ rather than in \(\Iso{\stackS}{\stackT}\), because
we do not necessarily wish to compute \(\Iso{\stackS}{\stackT}\) exactly.

The \Call{Search}{} procedure first calls \Call{Refine}{}.
This applies the refiners in turn,
aiming to prune the search space
(Lemma~\ref{lem-alg-refine-is-correct}).
Then, if the remaining search space contains at most one element, the
\Call{Search}{} procedure backtracks, having potentially returned an element,
if appropriate.
Otherwise, it divides the search with a splitter,
and recurses.

We assume the given approximator, splitter, and refiners are possible to
compute.

We thus claim that, given the specified inputs, and after a
finite number of steps, Algorithm~\ref{alg-basic-search} returns
the intersection $U_{1} \cap \cdots \cap U_{k}$:

\begin{theorem}\label{thm-alg-is-correct}
  Algorithm~\ref{alg-basic-search} is correct.
\end{theorem}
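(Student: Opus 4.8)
The plan is to isolate the behaviour of the two procedures and then assemble them: I would first establish a claim about \Call{Refine}{}, then a claim about \Call{Search}{} by well-founded induction, and finally specialise to the top-level call on line~\ref{line-start-search}.

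For \Call{Refine}{}, I would show that on any input $(\stackS,\stackT)$ it terminates and returns a pair $(\stackS',\stackT')$ with $|\Approx{\stackS'}{\stackT'}| \le |\Approx{\stackS}{\stackT}|$, together with the solution-preservation identity
\[
  (U_{1} \cap \cdots \cap U_{k}) \cap \Iso{\stackS'}{\stackT'}
  =
  (U_{1} \cap \cdots \cap U_{k}) \cap \Iso{\stackS}{\stackT}.
\]
Termination of the \textbf{while} loop is immediate: any iteration that does not return strictly decreases the non-negative integer $|\Approx{\stackS}{\stackT}|$ by the guard on line~\ref{line-refine-not-smaller}, and this quantity is bounded above by $|\Sym{\Omega}|$. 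For solution preservation I would apply the refiners one at a time. Each $(f_{L,i},f_{R,i})$ is a refiner for some $U_{j}$, and since $U_{1}\cap\cdots\cap U_{k}\subseteq U_{j}$, Lemma~\ref{lem-refiner-equiv-definitions}\ref{item-refiner-long} shows that appending $f_{L,i}(\stackS)$ and $f_{R,i}(\stackT)$ leaves $U_{j}\cap\Iso{\cdot}{\cdot}$ unchanged, hence also leaves the smaller set $(U_{1}\cap\cdots\cap U_{k})\cap\Iso{\cdot}{\cdot}$ unchanged; and when a refiner makes the two stacks differ in length, so that the \textbf{for} guard on line~\ref{line-refine-loop} halts, both sides are empty by Remark~\ref{rmk-stack-iso-auto}, so the identity still holds.

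For \Call{Search}{} I would argue by strong induction on $N \coloneqq |\Approx{\stackS}{\stackT}|$ that the call terminates and returns exactly $(U_{1}\cap\cdots\cap U_{k})\cap\Iso{\stackS}{\stackT}$. The initial call to \Call{Refine}{} preserves this target set and does not increase $N$, so it suffices to treat the three cases on the refined approximation. If it is $\varnothing$, then $\Iso{\stackS}{\stackT}\subseteq\Approx{\stackS}{\stackT}=\varnothing$ by Definition~\ref{defn-approx-iso}\ref{item-approx-true-overestimate}, so returning $\varnothing$ is correct. If it is a singleton $\{h\}$, then $h$ is the only possible isomorphism, and line~\ref{line-check-perm} returns $\{h\}$ precisely when $h$ genuinely lies in $\Iso{\stackS}{\stackT}\cap U_{1}\cap\cdots\cap U_{k}$, which is the entire solution set. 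If $N\ge 2$, the splitter returns $[\stackS_{1},\stackT_{1},\ldots,\stackT_{t}]$; Definition~\ref{defn-splitter}\ref{item-splitter-union} writes $\Iso{\stackS}{\stackT}$ as $\bigcup_{i}\Iso{\stackS\mathop{\Vert}\stackS_{1}}{\stackT\mathop{\Vert}\stackT_{i}}$, while Definition~\ref{defn-splitter}\ref{item-splitter-smaller} gives $|\Approx{\stackS\mathop{\Vert}\stackS_{1}}{\stackT\mathop{\Vert}\stackT_{i}}| < N$, so the inductive hypothesis applies to each recursive call; taking the union of the returned sets then recovers $(U_{1}\cap\cdots\cap U_{k})\cap\Iso{\stackS}{\stackT}$.

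Finally I would apply this to the top-level call. Since every permutation induces an automorphism of $\EmptyStack{\Omega}$, the induced isomorphisms from $\EmptyStack{\Omega}$ to itself form all of $\Sym{\Omega}$, so the call returns $(U_{1}\cap\cdots\cap U_{k})\cap\Sym{\Omega} = U_{1}\cap\cdots\cap U_{k}$, as claimed. I expect the main obstacle to be choosing a single well-founded measure that behaves correctly across the two nested loops: one must verify that \Call{Refine}{} terminates on its own while never increasing $|\Approx{\stackS}{\stackT}|$, and that the recursion in \Call{Search}{} strictly decreases this same measure despite the intervening refinement step. A secondary subtlety, easy to overlook, is the bookkeeping for stacks of unequal length, where the guard on line~\ref{line-refine-loop} and Definition~\ref{defn-approx-iso}\ref{item-approx-different-lengths} are what ensure that both the overestimation property and solution preservation continue to hold.
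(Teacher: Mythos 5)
Your proposal is correct and takes essentially the same route as the paper: it establishes the same two auxiliary lemmas (termination, non-increase of the approximation, and solution preservation for \Call{Refine}{}, then correctness of \Call{Search}{} by induction on the size of the approximation, using the splitter properties in the recursive case) and concludes by specialising to $\stackS = \stackT = \EmptyStack{\Omega}$, where $\Iso{\stackS}{\stackT} = \Sym{\Omega}$. The only deviations are minor: you induct on $|\Approx{\stackS}{\stackT}|$ where the paper inducts on $|\Approx{\Call{Refine}{\stackS,\stackT}}{}|$ (both measures are well-founded and the arguments are interchangeable), and in the refinement step you should handle separately the case of equal-length but \emph{non-isomorphic} stacks, where Lemma~\ref{lem-refiner-equiv-definitions}\ref{item-refiner-long} does not literally apply (it is stated only for isomorphic stacks) and the identity instead holds because both sides are empty by Remark~\ref{rmk-stack-iso-auto}, which is exactly the one-sentence patch the paper makes.
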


The proof of Theorem~\ref{thm-alg-is-correct} relies on the following lemmas.
In particular, it follows from Lemma~\ref{lem-alg-basic-search-is-correct} by
setting $\stackS = \stackT = \EmptyStack{\Omega}$.

\begin{lemma}\label{lem-alg-refine-is-correct}
  Let the notation of Algorithm~\ref{alg-basic-search} hold.
  Then the \Call{Refine}{} procedure terminates after a finite number of steps,
  with
  \begin{enumerate}[label=\textrm{(\roman*)}]
    \item\label{item-refine-gives-smaller-approx}
      \(
      |\Approx{\Call{Refine}{\stackS, \stackT}}{}|
      \leq
      |\Approx{\stackS}{\stackT}|
      \), and
    \item\label{item-refine-is-correct}
      \(
      (U_{1} \cap \cdots \cap U_{k})
        \cap \Iso{\stackS}{\stackT}
      =
      (U_{1} \cap \cdots \cap U_{k})
        \cap \Iso{\Call{Refine}{\stackS, \stackT}}{}\).
  \end{enumerate}
\end{lemma}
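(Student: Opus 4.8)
The plan is to treat \Call{Refine}{} as a loop and maintain two invariants at the top of the \textbf{while} loop on line~\ref{line-while-begin}. Write $W \coloneqq U_{1} \cap \cdots \cap U_{k}$, let $(\stackS_{0}, \stackT_{0})$ be the input pair, and set $N_{0} \coloneqq |\Approx{\stackS_{0}}{\stackT_{0}}|$. With $(\stackS, \stackT)$ denoting the current pair, the invariants are (a) $|\Approx{\stackS}{\stackT}| \leq N_{0}$, and (b) $W \cap \Iso{\stackS}{\stackT} = W \cap \Iso{\stackS_{0}}{\stackT_{0}}$. Both hold trivially at the first test of the \textbf{while} guard, since then $(\stackS,\stackT) = (\stackS_{0},\stackT_{0})$.

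The core of the argument is a single-refiner preservation claim: whenever a refiner $(f_{L,i}, f_{R,i})$ for some $U_{j}$ is applied on line~\ref{line-apply-refiner} to a pair with $|\stackS| = |\stackT|$ (guaranteed by the guard of the \textbf{for} loop on line~\ref{line-refine-loop}),
\[
  W \cap \Iso{\stackS}{\stackT}
  =
  W \cap \Iso{\stackS \mathop{\Vert} f_{L,i}(\stackS)}{\stackT \mathop{\Vert} f_{R,i}(\stackT)}.
\]
I would prove this by cases. If $\stackS$ and $\stackT$ are isomorphic, then Lemma~\ref{lem-refiner-equiv-definitions}\ref{item-refiner-long} gives the analogous identity with $U_{j}$ in place of $W$; intersecting with $W$ and using $W \subseteq U_{j}$ (so $W \cap X = W \cap (U_{j} \cap X)$ for every $X$) yields the claim, and this remains correct even if the appended stacks end up with different lengths, as then the right-hand $\operatorname{Iso}$ set is simply empty. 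If instead $\stackS$ and $\stackT$ are non-isomorphic but of equal length, then $\Iso{\stackS}{\stackT} = \varnothing$, and by Remark~\ref{rmk-stack-iso-auto} the appended pair also has empty $\operatorname{Iso}$ set, so both sides vanish. Iterating across the \textbf{for} loop shows that it preserves $W \cap \Iso{\cdot}{\cdot}$, which is the inductive step for invariant (b).

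For invariant (a) and the solution identity in part~\ref{item-refine-is-correct}, I would examine the return points. On line~\ref{line-refine-loop-end} the saved pair $(\stackS', \stackT')$ is returned; this is exactly the current pair at the top of the iteration, so by (a) its approximation has size at most $N_{0}$, and by (b) it satisfies the required identity. If the loop does not return there, the test on line~\ref{line-refine-not-smaller} must have failed, i.e.\ $|\Approx{\stackS}{\stackT}| < |\Approx{\stackS'}{\stackT'}| \leq N_{0}$, so the next iteration begins with an approximation strictly smaller than $N_{0}$, re-establishing (a), while (b) persists by the preservation claim. The last exit is line~\ref{line-refine-return-empty}, reached when the \textbf{while} guard fails with $\Approx{\stackS}{\stackT} = \varnothing$; then Definition~\ref{defn-approx-iso}\ref{item-approx-true-overestimate} forces $\Iso{\stackS}{\stackT} = \varnothing$, so by (b) both sides of part~\ref{item-refine-is-correct} equal $W \cap \Iso{\stackS_{0}}{\stackT_{0}}$, and part~\ref{item-refine-gives-smaller-approx} holds as $0 \leq N_{0}$.

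Termination then follows because $|\Approx{\stackS}{\stackT}|$ is a non-negative integer that strictly decreases on every iteration that does not return: the loop continues precisely when $|\Approx{\stackS}{\stackT}| < |\Approx{\stackS'}{\stackT'}|$, the negation of the test on line~\ref{line-refine-not-smaller}. Hence the \textbf{while} loop runs at most $N_{0} + 1$ times, and each iteration performs at most $m$ passes of the \textbf{for} loop together with finitely many (assumed computable) approximator and refiner evaluations. The main obstacle is the single-refiner preservation claim, and within it the bookkeeping for the case where a refiner leaves the two stacks with unequal lengths; the identity $W \cap X = W \cap (U_{j} \cap X)$ together with Lemma~\ref{lem-refiner-equiv-definitions}\ref{item-refiner-long} disposes of this cleanly.
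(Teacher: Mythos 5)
Your proposal is correct and follows essentially the same route as the paper's proof: the heart of both arguments is the single-refiner preservation identity for stacks of equal length, proved by splitting into the isomorphic case (Lemma~\ref{lem-refiner-equiv-definitions}\ref{item-refiner-long}, intersected with $U_{1} \cap \cdots \cap U_{k} \subseteq U_{j}$) and the non-isomorphic equal-length case (Remark~\ref{rmk-stack-iso-auto}), with termination from the strictly decreasing size of the approximation. Your explicit loop invariants and return-point analysis merely formalise what the paper treats as evident from the structure of \Call{Refine}{}, and your handling of the $W \cap X = W \cap (U_{j} \cap X)$ step makes explicit a detail the paper leaves implicit.
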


\begin{proof}
  The \Call{Refine}{} procedure performs finitely
  many iterations of its \emph{while} loop,
  and so terminates,
  since a new iteration occurs only if
  the prior one yields a smaller search space.
  It is evident in the definition of \Call{Refine}{}
  that~\ref{item-refine-gives-smaller-approx} holds.
  To prove~\ref{item-refine-is-correct},
  note that $\Call{Refine}{\stackS, \stackT}$ is obtained
  from $(\stackS,\stackT)$
  by the repeated application of refiners to stacks of equal length
  (line~\ref{line-apply-refiner}).
  Thus it suffices to show that if $i \in \{1, \ldots, m\}$ and
  $|\stackS| = |\stackT|$, then
  \begin{equation*}\label{eq-alg-refine-is-correct}
    (U_{1} \cap \cdots \cap U_{k})
      \cap
    \Iso{\stackS}{\stackT}
    =
    (U_{1} \cap \cdots \cap U_{k})
      \cap
    \Iso{\stackS \mathop{\Vert} f_{L,i}(\stackS)}{\stackT \mathop{\Vert} f_{R,i}(\stackT)}.
  \end{equation*}
  If $\Iso{\stackS}{\stackT} = \varnothing$, then also
  $\Iso{\stackS \mathop{\Vert} f_{L,i}(\stackS)}{\stackT \mathop{\Vert}
  f_{R,i}(\stackT)} = \varnothing$
  by Remark~\ref{rmk-stack-iso-auto}, thereby satisfying the equation.
  Otherwise, the equation holds by
  Lemma~\ref{lem-refiner-equiv-definitions}\ref{item-refiner-long}.
\end{proof}

\begin{lemma}\label{lem-alg-basic-search-is-correct}
  Let the notation of Algorithm~\ref{alg-basic-search} hold.
  Then the \Call{Search}{} procedure terminates with
  \(
  \Call{Search}{\stackS, \stackT}
  =
  (U_{1} \cap \cdots \cap U_{k}) \cap \Iso{\stackS}{\stackT}
  \).
\end{lemma}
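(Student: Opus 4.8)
The plan is to argue by strong induction on the non-negative integer $n := |\Approx{\stackS}{\stackT}|$, which is finite because $\Approx{\stackS}{\stackT}$ is either empty or a right coset of a subgroup of the finite group $\Sym{\Omega}$. Throughout, write $\mathcal{I} := U_{1} \cap \cdots \cap U_{k}$ for the target intersection, and let $(\stackS', \stackT') := \Call{Refine}{\stackS, \stackT}$ denote the pair produced by the first line of \Call{Search}{}. By Lemma~\ref{lem-alg-refine-is-correct}, this call terminates and satisfies $|\Approx{\stackS'}{\stackT'}| \le n$ (part~\ref{item-refine-gives-smaller-approx}) together with $\mathcal{I} \cap \Iso{\stackS}{\stackT} = \mathcal{I} \cap \Iso{\stackS'}{\stackT'}$ (part~\ref{item-refine-is-correct}); hence it suffices to show that the remainder of the procedure returns $\mathcal{I} \cap \Iso{\stackS'}{\stackT'}$. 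The terminal cases $n = 0$ and $n = 1$ require no recursion and are handled by the branches below, so the induction is clean.

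I would then split into the three cases that mirror the \Call{Search}{} branches, keyed on $|\Approx{\stackS'}{\stackT'}|$. In the first two cases the key tool is the overestimate property $\Iso{\stackS'}{\stackT'} \subseteq \Approx{\stackS'}{\stackT'}$ from Definition~\ref{defn-approx-iso}\ref{item-approx-true-overestimate}. If $\Approx{\stackS'}{\stackT'} = \varnothing$ then $\Iso{\stackS'}{\stackT'} = \varnothing$, so the returned $\varnothing$ is correct. If $\Approx{\stackS'}{\stackT'} = \{h\}$, then $\Iso{\stackS'}{\stackT'} \subseteq \{h\}$, so $\mathcal{I} \cap \Iso{\stackS'}{\stackT'}$ is either $\{h\}$ or $\varnothing$; a short check shows that $h$ lies in $\mathcal{I} \cap \Iso{\stackS'}{\stackT'}$ exactly when ${\stackS'}^{h} = \stackT'$ and $h \in \mathcal{I}$, which is precisely the guard of line~\ref{line-check-perm} that the branch tests before returning $\{h\}$ or $\varnothing$. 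The substantive case is $|\Approx{\stackS'}{\stackT'}| \ge 2$: here the splitter's guarantees in Definition~\ref{defn-splitter} apply to $(\stackS', \stackT')$, so $\Split{\stackS'}{\stackT'} = [S_{1}, T_{1}, \ldots, T_{t}]$ behaves as specified. For each $i$, Definition~\ref{defn-splitter}\ref{item-splitter-smaller} gives $|\Approx{\stackS' \mathop{\Vert} S_{1}}{\stackT' \mathop{\Vert} T_{i}}| < |\Approx{\stackS'}{\stackT'}| \le n$, so the inductive hypothesis applies to each recursive call and yields $\Call{Search}{\stackS' \mathop{\Vert} S_{1}, \stackT' \mathop{\Vert} T_{i}} = \mathcal{I} \cap \Iso{\stackS' \mathop{\Vert} S_{1}}{\stackT' \mathop{\Vert} T_{i}}$. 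Taking the union over $i$, pulling $\mathcal{I}$ out of the union, and applying Definition~\ref{defn-splitter}\ref{item-splitter-union} to collapse the union of $\Iso$-sets to $\Iso{\stackS'}{\stackT'}$ gives the desired value.

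For termination I would observe that the entry measure $|\Approx{\stackS}{\stackT}|$ strictly decreases along every chain of recursive calls, by the same inequality from Definition~\ref{defn-splitter}\ref{item-splitter-smaller} combined with Lemma~\ref{lem-alg-refine-is-correct}\ref{item-refine-gives-smaller-approx}, while each node spawns only finitely many children (the splitter returns a finite list) and performs a finite amount of work; hence the recursion tree is finite. The main obstacle I anticipate is bookkeeping rather than depth: one must run the induction on the measure \emph{before} refinement and use Lemma~\ref{lem-alg-refine-is-correct}\ref{item-refine-gives-smaller-approx} to pass from the post-refinement measure $|\Approx{\stackS'}{\stackT'}|$ to the pre-refinement measure $n$, so that the strict decrease the splitter guarantees on $(\stackS', \stackT')$ genuinely certifies a strict decrease of the induction parameter governing each recursive call.
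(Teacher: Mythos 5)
Your proposal is correct and takes essentially the same approach as the paper: strong induction on the size of an approximator value, with the same three-way case analysis keyed on the refined pair and the same appeals to Lemma~\ref{lem-alg-refine-is-correct}, Definition~\ref{defn-approx-iso}\ref{item-approx-true-overestimate}, and Definition~\ref{defn-splitter}. The only difference is bookkeeping: you induct on the pre-refinement measure $|\Approx{\stackS}{\stackT}|$ whereas the paper inducts on the post-refinement measure $|\Approx{\Call{Refine}{\stackS,\stackT}}{}|$, and both choices succeed for the same reason, namely that refinement does not increase the measure while splitting strictly decreases it.
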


\begin{proof}
  We proceed by induction on 
  $|\Approx{\Call{Refine}{\stackS, \stackT}}{}|$,
  bearing in mind
  Definition~\ref{defn-approx-iso}\ref{item-approx-true-overestimate}
  and Lemma~\ref{lem-alg-refine-is-correct}\ref{item-refine-is-correct}.
  If $|\Approx{\Call{Refine}{\stackS, \stackT}}{}| \in \{0,1\}$,
  then it is straightforward to verify that the \Call{Search}{} procedure
  terminates with the correct value.

  Let $n \in \N$ with $n \geq 2$, and assume the statement holds
  for all stacks $\stackS$ and $\stackT$ with
  $|\Approx{\Call{Refine}{\stackS, \stackT}}{}| < n$.
  If $|\Approx{\Call{Refine}{\stackS, \stackT}}{}| = n$, then
  on line~\ref{line-split}, the splitter
  gives a finite list
  of stacks, giving a finite number of calls to \Call{Search}{}.
  By Definition~\ref{defn-splitter}\ref{item-splitter-union}
  and~\ref{item-splitter-smaller},
  by Lemma~\ref{lem-alg-refine-is-correct}\ref{item-refine-gives-smaller-approx},
  and by assumption, these recursive calls terminate
  with values whose union is
  \((U_{1} \cap \cdots \cap U_{k}) \cap \Iso{\Call{Refine}{\stackS,
  \stackT}}{}\).
  The result follows by
  Lemma~\ref{lem-alg-refine-is-correct}\ref{item-refine-is-correct}, and by induction.
\end{proof}

\begin{remark}\label{rmk-search-single}
Algorithm~\ref{alg-basic-search} finds \emph{all} elements
of $U_{1} \cap \cdots \cap U_{k}$.
To search for a \emph{single} element, if one exists,
one can modify the \Call{Search}{} procedure to return a result on
line~\ref{line-single-solution}, as soon as the first solution
is found.
We name this modified procedure \Call{SearchSingle}{}.
Thus $\Call{SearchSingle}{\stackS,\stackT}$ gives a single element of
$\Iso{\stackS}{\stackT} \cap (U_{1} \cap \cdots \cap U_{k})$,
if one exists, else $\varnothing$.
This is especially useful when one wishes to find an
isomorphism from one combinatorial structure to another, or to prove that they
are non-isomorphic.
\end{remark}

\subsection{Searching for a generating set of a subgroup}\label{sec-generators}

It is usually most efficient to compute with a permutation group
via a base and strong generating set (BSGS).
In the standard definition (see~\cite[p.~101]{dixonmortimer}),
a base for a subgroup $G$ of $\Sym{\Omega}$
is a list of points in $\Omega$ whose stabiliser in $G$ is trivial.
Here, we use a broader definition, where the list may contain \emph{any} objects
on which $G$ acts.

In this section
we present Algorithm~\ref{alg-bsgs-search}, which,
given subsets $U_{1}, \ldots, U_{k} \subseteq \Sym{\Omega}$ whose
intersection is a subgroup,
returns a base and strong generating set for $U_{1} \cap \cdots \cap U_{k}$.
The base is given as a list of labelled digraph stacks.
This algorithm is derived from Algorithm~\ref{alg-basic-search}:
the first two cases simplify since \(\Approx{S}{S}\) is a subgroup,
and the recursive case turns into a search for a stabiliser and coset representatives.
Note that Algorithm~\ref{alg-bsgs-search} uses the partially-constructed
generating set to prune the search on line~\ref{line-pruning}.
It thus usually performs a smaller search than does
Algorithm~\ref{alg-basic-search} with the same input.

\begin{algorithm}[!ht]
  \caption{Search for a base and strong generating set of a
           subgroup of $\Sym{\Omega}$.}\label{alg-bsgs-search}

  \begin{algorithmic}[1]

    \item[\textbf{Input:}]
      as in Algorithm~\ref{alg-basic-search}, plus the assumption that
      $U_{1} \cap \cdots \cap U_{k}$ is a subgroup.
    \item[\textbf{Output:}]
      a base and strong generating set of the subgroup $U_{1} \cap \cdots \cap
      U_{k}$.

    \vspace{2mm}

    \Procedure{SearchBSGS}{$\stackS$}\Comment{See
    Lemma~\ref{lem-SearchBSGS-is-correct}.}

    \State{\((\stackS, \stackS) \gets \Call{Refine}{\stackS, \stackS}\)}
    \Comment{Refine the given stacks.}\label{line-gens-refine}

    \Case{$\Approx{\stackS}{\stackS} = \{\idOmega\}$}\label{line-bsgs-base-case}
      \State{\Return{$\left( \textsc{EmptyBase}, \{\idOmega\} \right)$}}
      \Comment{A BSGS for the trivial group.}
    \EndCase{}

    \Case{$|\Approx{\stackS}{\stackS}| \geq 2$}\label{line-bsgs-recursive-case}

      \State{$[\stackS_{1}, \stackS_{1}, \ldots, \stackS_{t}] \gets \Call{Split}{\stackS, \stackS}$}
      \Comment{See Remark~\ref{rmk-splitter-first}.}\label{line-genset-split}

      \State{$\left( \textsc{Base}, X \right) \gets
              \Call{SearchBSGS}{\stackS \mathop{\Vert} \stackS_{1}}$}
              \hspace{-2mm}
      \Comment{Find BSGS for the stabiliser of $\stackS_{1}$.}\label{line-gens-recurse}

      \State{$\textsc{Base} \gets [\stackS_{1}] \mathop{\Vert} \textsc{Base}$}
      \Comment{Prepend $\stackS_{1}$ to the base for the stabiliser of
      $\stackS_{1}$.}

      \For{$i \in \{2, \ldots, t\}$}

        \If{$\stackS_{i} \not\in \stackS_{j}^{\<X\>}$ for any
            $j \in \{1, \ldots, i - 1\}$}
            \Comment{Pruning.}\label{line-pruning}

          \State{$X \gets X \cup
                   \Call{SearchSingle}{\stackS \mathop{\Vert} \stackS_{1},
                                       \stackS \mathop{\Vert} \stackS_{i}}$}
          \Comment{Search for a coset rep.}\label{line-search-for-coset-rep}
        \EndIf{}
      \EndFor{}

      \State{\Return{$\left( \textsc{Base}, X \right)$}}\label{line-genset-return}
    \EndCase{}

    \EndProcedure{}

    \Procedure{Refine}{$\stackS, \stackT$}
    \Comment{\emph{The \Call{Refine}{} procedure from
    Algorithm~\ref{alg-basic-search}.}}
    \EndProcedure{}

    \Procedure{SearchSingle}{$\stackS, \stackT$}
    \Comment{\emph{The procedure from
    Remark~\ref{rmk-search-single}.}}
    \EndProcedure{}

    \State{\Return{\Call{SearchBSGS}{$\EmptyStack{\Omega}$}}}\label{line-start-search-bsgs}

  \end{algorithmic}
\end{algorithm}

\begin{remark}
To obtain a base consisting of points in $\Omega$,
one can use the splitter from
Definition~\ref{defn-point-splitter}: using its notation,
Algorithm~\ref{alg-bsgs-search} returns
a base $[[\Gamma_{\alpha_{1}}], \ldots, [\Gamma_{\alpha_{r}}]]$.
For each $\alpha \in \Omega$, a permutation fixes
$[\Gamma_{\alpha}]$ if and only if it fixes $\alpha$, so
a generating set is strong with respect to
$[[\Gamma_{\alpha_{1}}], \ldots, [\Gamma_{\alpha_{r}}]]$ if and only if it is
strong with respect to $[\alpha_{1}, \ldots, \alpha_{r}]$.
\end{remark}

\begin{remark}
Algorithm~\ref{alg-bsgs-search} is useful when searching for an
intersection of cosets.
Let the notation of Algorithm~\ref{alg-bsgs-search} hold, and suppose that
each set $U_{i}$ is a coset of a subgroup of $\Sym{\Omega}$.
We can use the \Call{SearchSingle}{} procedure
to find some $g \in
U_{1} \cap \cdots \cap U_{k}$, or to prove that no such element exists.
In the former case, then for all $i \in \{1, \ldots, k\}$,
$(f_{L,i},f_{L,i})$ is a refiner for the group $U_{i} g^{-1}$
by Lemma~\ref{lem-refiner-right-coset}.
Therefore we may use Algorithm~\ref{alg-bsgs-search} to search for a
base and strong generating set of $U_{1} g^{-1} \cap \cdots \cap U_{k} g^{-1}$,
which in combination with $g$ compactly describes
$U_{1} \cap \cdots \cap U_{k}$.
\end{remark}

\begin{lemma}\label{lem-SearchBSGS-is-correct}
  Let the notation of Algorithm~\ref{alg-bsgs-search} hold.
  Then the \Call{SearchBSGS}{} procedure, given $\stackS$, terminates with
  a base and strong generating set of
  \(
  \Auto{\stackS}
  \cap
  (U_{1} \cap \cdots \cap U_{k})
  \).
\end{lemma}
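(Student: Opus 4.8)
The plan is to prove Lemma~\ref{lem-SearchBSGS-is-correct} by induction on $|\Approx{\stackS}{\stackS}|$, mirroring the structure of the proof of Lemma~\ref{lem-alg-basic-search-is-correct} but tracking the base and strong generating set that are built up. Write $H \coloneqq U_{1} \cap \cdots \cap U_{k}$, which is a subgroup by hypothesis, and set $G \coloneqq \Auto{\stackS} \cap H$; this is the group we must show is correctly computed. First I would record the effect of the call to \Call{Refine}{} on line~\ref{line-gens-refine}: by Lemma~\ref{lem-alg-refine-is-correct}\ref{item-refine-is-correct} with $\stackT = \stackS$ we have $G = \Auto{\stackS} \cap H = \Iso{\Call{Refine}{\stackS,\stackS}}{} \cap H$, so refining does not change the group we seek, and by part~\ref{item-refine-gives-smaller-approx} it does not increase $|\Approx{\stackS}{\stackS}|$. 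Hence it is harmless to assume $\stackS$ is already refined when analysing the two cases.

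For the base case on line~\ref{line-bsgs-base-case}, if $\Approx{\stackS}{\stackS} = \{\idOmega\}$, then since $\Auto{\stackS} \subseteq \Approx{\stackS}{\stackS}$ by Definition~\ref{defn-approx-iso}\ref{item-approx-true-overestimate}, we get $\Auto{\stackS} = \{\idOmega\}$, so $G = \{\idOmega\}$, and the procedure correctly returns a BSGS for the trivial group. For the recursive case on line~\ref{line-bsgs-recursive-case}, the splitter is applied to $(\stackS,\stackS)$; by Remark~\ref{rmk-splitter-first} we may take $\stackS_{1} = \stackT_{1}$, so the output is $[\stackS_{1}, \stackS_{1}, \ldots, \stackS_{t}]$. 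The recursive call on line~\ref{line-gens-recurse} is to $\Call{SearchBSGS}{\stackS \mathop{\Vert} \stackS_{1}}$, and by Definition~\ref{defn-splitter}\ref{item-splitter-smaller} together with Lemma~\ref{lem-alg-refine-is-correct}\ref{item-refine-gives-smaller-approx} the refined approximation there is strictly smaller than $|\Approx{\stackS}{\stackS}|$, so the inductive hypothesis applies and returns a BSGS $(\textsc{Base}, X)$ for the stabiliser $S_{1} \coloneqq \Auto{\stackS \mathop{\Vert} \stackS_{1}} \cap H$. The key identity to establish is that $S_{1}$ is precisely the stabiliser in $G$ of the stack $\stackS_{1}$ (under the action appending it to $\stackS$), using Remark~\ref{rmk-stack-iso-auto} to rewrite $\Auto{\stackS \mathop{\Vert} \stackS_{1}} = \Auto{\stackS} \cap \Auto{\stackS_{1}}$.

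The main obstacle, and the heart of the proof, is the coset-representative loop on lines~\ref{line-pruning}--\ref{line-search-for-coset-rep}. I would argue that $G$ is generated by $S_{1}$ together with coset representatives for $S_{1}$ in $G$, and that these cosets correspond bijectively to the orbit of $\stackS_{1}$ under $G$. By Definition~\ref{defn-splitter}\ref{item-splitter-union}, every element of $\Iso{\stackS}{\stackS}$ — hence every element of $G$ — lies in some $\Iso{\stackS \mathop{\Vert} \stackS_{1}}{\stackS \mathop{\Vert} \stackS_{i}}$, so each $g \in G$ sends $\stackS_{1}$ to some $\stackS_{i}$; conversely $\Call{SearchSingle}{\stackS \mathop{\Vert} \stackS_{1}, \stackS \mathop{\Vert} \stackS_{i}}$ returns an element of $G$ carrying $\stackS_{1}$ to $\stackS_{i}$ when one exists, by Remark~\ref{rmk-search-single} and Lemma~\ref{lem-alg-basic-search-is-correct}. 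The pruning test on line~\ref{line-pruning} skips index $i$ exactly when $\stackS_{i}$ already lies in the orbit $\stackS_{j}^{\langle X \rangle}$ for some earlier $j$, which (by the standard orbit–stabiliser/Schreier argument) means a representative for that coset is already expressible from the accumulated generators; I would show that skipping such $i$ loses no generators, so that the final set $X$ together with the transversal built by the loop forms a strong generating set relative to $[\stackS_{1}] \mathop{\Vert} \textsc{Base}$. Finally I would invoke the (broad) definition of a base and strong generating set recalled in Section~\ref{sec-generators}, checking that prepending $\stackS_{1}$ to $\textsc{Base}$ yields a base for $G$ because the stabiliser chain it induces terminates in the trivial group, thereby completing the induction.
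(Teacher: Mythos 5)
Your proposal takes essentially the same route as the paper's proof: induction on the size of the refined approximation, the use of Remark~\ref{rmk-stack-iso-auto} to identify $\Auto{\stackS \mathop{\Vert} \stackS_{1}}$ as the stabiliser of $\stackS_{1}$ in $\Auto{\stackS}$, the use of Definition~\ref{defn-splitter}\ref{item-splitter-union} and Remark~\ref{rmk-splitter-first} to identify the non-empty sets $\Iso{\stackS \mathop{\Vert} \stackS_{1}}{\stackS \mathop{\Vert} \stackS_{i}}$, intersected with $U_{1} \cap \cdots \cap U_{k}$, with the right cosets of that stabiliser, and Remark~\ref{rmk-search-single} to justify that \Call{SearchSingle}{} supplies the missing coset representatives. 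Your treatment of the base case and of the pruning test on line~\ref{line-pruning} is in fact more explicit than the paper's, which compresses these points into the phrase ``a sufficient selection of the right cosets''.

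There is, however, one ingredient that the paper states and you omit, and your step ``it is harmless to assume $\stackS$ is already refined'' silently depends on it: you must know that $\Call{Refine}{\stackS, \stackS}$ returns a pair of \emph{equal} stacks. Everything downstream presupposes this diagonal structure: the assignment on line~\ref{line-gens-refine}, your base-case argument (which uses $\idOmega \in \Auto{\stackS} \subseteq \Approx{\stackS}{\stackS}$, a statement about a pair of equal stacks), and your appeal to Remark~\ref{rmk-splitter-first}, which only applies when the splitter is given two equal stacks. The point is not automatic, because \Call{Refine}{} appends $f_{L,i}(\stackS)$ on the left but $f_{R,i}(\stackT)$ on the right, and a priori $f_{L,i} \neq f_{R,i}$. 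This is exactly where the hypothesis that $U_{1} \cap \cdots \cap U_{k}$ is a subgroup enters: the intersection then contains $\idOmega$, hence so does each $U_{j}$, so Lemma~\ref{lem-group-refiner-symmetric} gives $f_{L,i} = f_{R,i}$ for every refiner in the input, and therefore \Call{Refine}{} maps equal stacks to equal stacks. The repair is a single sentence, but without it the diagonal form of the recursion on which your whole argument rests is unjustified.
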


\begin{proof}
The \Call{SearchBSGS}{} procedure
first calls the \Call{Refine}{} procedure to prune the search
(see Lemma~\ref{lem-alg-refine-is-correct}).
This returns a pair of equal stacks,
since it is given equal stacks, and each refiner is a
pair of equal functions (see Lemma~\ref{lem-group-refiner-symmetric}).
That one of the conditions on either line~\ref{line-bsgs-base-case}
or line~\ref{line-bsgs-recursive-case} is satisfied
follows by Lemma~\ref{defn-approx-iso}\ref{item-approx-true-overestimate}.
The procedure accordingly returns a trivial solution, or searches
recursively.

The \Call{SearchBSGS}{} procedure only differs significantly from the
\Call{Search}{} procedure of Algorithm~\ref{alg-basic-search} in its recursive
step.
Let
$\Call{Split}{\stackS, \stackS} =
[\stackS_{1}, \stackS_{1}, \stackS_{2}, \ldots, \stackS_{t}]$
as on line~\ref{line-genset-split}.
Then $\Auto{\stackS \mathop{\Vert} \stackS_{1}}$
is the stabiliser of $\stackS_{1}$ in
$\Auto{\stackS}$
by Remark~\ref{rmk-stack-iso-auto},
and
by Definition~\ref{defn-splitter} and Remark~\ref{rmk-splitter-first},
its right cosets in
$\Auto{\stackS}$
are the non-empty sets
$\Iso{\stackS \mathop{\Vert} \stackS_{1}}{\stackS \mathop{\Vert}
\stackS_{i}}$
for $i \in \{2,\ldots,t\}$.
Analogous statements hold for
the stabiliser 
$\Auto{\stackS \mathop{\Vert} \stackS_{1}} \cap (U_{1} \cap \cdots \cap U_{k})$
of $\stackS_{1}$ in
$\Auto{\stackS} \cap (U_{1} \cap \cdots \cap U_{k})$,
and for the sets
$\Iso{\stackS \mathop{\Vert} \stackS_{1}}
     {\stackS \mathop{\Vert} \stackS_{i}}
     \cap
     (U_{1} \cap \cdots \cap U_{k})$.
It is thus possible to build a BSGS of
$\Auto{\stackS} \cap (U_{1} \cap \cdots \cap U_{k})$
recursively from a BSGS of the stabiliser
of $\stackS_{1}$ in $\Auto{\stackS} \cap (U_{1} \cap \cdots \cap U_{k})$,
along with representatives of a sufficient selection of the right cosets of the
stabiliser,
as is done in lines~\ref{line-gens-recurse}--\ref{line-search-for-coset-rep}.

The validity of the recursion in the \Call{SearchBSGS}{} procedure can be shown
by induction on $|\Approx{\Call{Refine}{\stackS, \stackS}}{}|$,
as in the proof of Lemma~\ref{lem-alg-basic-search-is-correct}.
It follows by Remark~\ref{rmk-search-single} that, on
line~\ref{line-search-for-coset-rep},
the \Call{SearchSingle}{} procedure returns the desired representatives
of the sets
$\Iso{\stackS \mathop{\Vert} \stackS_{1}}
     {\stackS \mathop{\Vert} \stackS_{i}}
     \cap
     (U_{1} \cap \cdots \cap U_{k})$.
\end{proof}

\begin{theorem}\label{thm-alg-gens-is-correct}
  Algorithm~\ref{alg-bsgs-search} is correct.
\end{theorem}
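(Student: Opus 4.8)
The plan is to derive Theorem~\ref{thm-alg-gens-is-correct} as an immediate consequence of Lemma~\ref{lem-SearchBSGS-is-correct}, in exactly the same way that Theorem~\ref{thm-alg-is-correct} follows from Lemma~\ref{lem-alg-basic-search-is-correct}. Saying that Algorithm~\ref{alg-bsgs-search} is correct means that, under the stated input assumptions (in particular that $U_{1} \cap \cdots \cap U_{k}$ is a subgroup), the algorithm terminates after finitely many steps and returns a base and strong generating set of $U_{1} \cap \cdots \cap U_{k}$. Since the top-level instruction on line~\ref{line-start-search-bsgs} simply returns the output of $\Call{SearchBSGS}{\EmptyStack{\Omega}}$, it suffices to analyse this single call.

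First I would recall from Section~\ref{sec-stacks} that every element of $\Sym{\Omega}$ induces an automorphism of the empty stack, so that $\Auto{\EmptyStack{\Omega}} = \Sym{\Omega}$. Applying Lemma~\ref{lem-SearchBSGS-is-correct} with $\stackS = \EmptyStack{\Omega}$ then shows that $\Call{SearchBSGS}{\EmptyStack{\Omega}}$ terminates and returns a base and strong generating set of
\[
  \Auto{\EmptyStack{\Omega}} \cap (U_{1} \cap \cdots \cap U_{k})
  = \Sym{\Omega} \cap (U_{1} \cap \cdots \cap U_{k})
  = U_{1} \cap \cdots \cap U_{k},
\]
which is precisely the required output.

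I do not expect a genuine obstacle in this argument, because all of the substance has already been absorbed into Lemma~\ref{lem-SearchBSGS-is-correct}: that lemma supplies both termination and the recursive construction of the base and strong generating set via the stabiliser-and-coset-representative bookkeeping of lines~\ref{line-gens-recurse}--\ref{line-search-for-coset-rep}, together with the correctness of the pruning step on line~\ref{line-pruning}. The only point worth stating carefully is the reduction itself, namely that intersecting with $\Auto{\EmptyStack{\Omega}} = \Sym{\Omega}$ leaves $U_{1} \cap \cdots \cap U_{k}$ unchanged, so that the conclusion of the lemma specialises exactly to the statement of the theorem. Everything else is inherited directly from the lemma.
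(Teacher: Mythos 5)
Your proposal is correct and matches the paper's proof exactly: the paper also deduces the theorem by applying Lemma~\ref{lem-SearchBSGS-is-correct} with $\stackS = \EmptyStack{\Omega}$ (the paper states this in one line), and your additional observation that $\Auto{\EmptyStack{\Omega}} = \Sym{\Omega}$ makes the intersection collapse to $U_{1} \cap \cdots \cap U_{k}$ is precisely the implicit step the paper leaves to the reader.
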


\begin{proof}
  Set $\stackS = \stackT = \EmptyStack{\Omega}$ in
  Lemma~\ref{lem-SearchBSGS-is-correct}.
\end{proof}

\section{Searching with a fixed sequence of left-hand
stacks}\label{sec-r-base}

In this section, we discuss a consequence of our definitions and the setup of
our algorithms, which enables a significant performance optimisation, and which
allows us to use a special kind of refiner.
This idea was inspired by, and is closely related to, the $\mathfrak{R}$-base
technique of Jeffrey Leon~\cite[Section~6]{leon1991} for partition backtrack
search, although we present the idea quite differently.

Recall that Algorithms~\ref{alg-basic-search} and~\ref{alg-bsgs-search} are
organised around a pair of labelled digraph stacks
(the stacks are equal in the \Call{SearchBSGS}{} procedure),
with both stacks initially equal to $\EmptyStack{\Omega}$.
We observe that when each of these algorithms is executed with a particular
input,
then in each branch of the search,
the left-hand stack is modified
by appending the same sequence of stacks to it, up to the end of branch.
(Note that different branches can have different lengths.)

This is because the stacks in Algorithms~\ref{alg-basic-search}
and~\ref{alg-bsgs-search} are only modified by appending stacks produced by refiners
and splitters,
and because decisions about the progression of the algorithm are made according
to the size of the value of the isomorphism approximator.
By the definition of a refiner as a pair of functions of one
variable (Definition~\ref{defn-refiner}), the left-hand stack returned by a
refiner depends only on the left-hand stack it is given;
by Definition~\ref{defn-splitter}\ref{item-splitter-invariant},
the left-hand stack defined by a splitter depends only on the given left-hand stack;
and by Definition~\ref{defn-approx-iso}\ref{item-approx-right-coset-of-aut},
the size of the value of an isomorphism approximator is either zero, in which
case the current branch immediately ends, or it depends only on the left-hand
stack that is given.

Therefore we can store the new stacks that are appended to the
left-hand stack as they are constructed,
and simply recall them as they are needed later.
This means that, on most occasions, when applying a refiner,
we recall the value for the left stack, and compute only the value for the
right-hand stack.
This optimisation significantly improves the performance of the \Call{Refine}{}
procedure.

\subsection{Constructing and applying refiners via the fixed sequence of
left-hand stacks}\label{sec-refiners-via-stack}

We saw in Lemmas~\ref{lem-simple-refiner} and~\ref{lem-refiner-right-coset}
that any refiner for a non-empty set
is derived from a function $f$ from $\Stacks{\Omega}$ to itself satisfying
$f(\stackS^{g}) = {f(\stackS)}^{g}$ for certain
$g \in \Sym{\Omega}$.
In this section, we give an example that demonstrates a difficulty in satisfying
this condition, and a general method for giving refiners that overcome
this difficulty.
We use such refiners for groups and cosets in our experiments.

\begin{example}\label{ex-refiner-fixed-stack}
  Let $\Omega = \{1, \ldots, 6\}$ and
  \(G = \< (1\,2), (3\,4), (5\,6), (1\,3\,5)(2\,4\,6) \>\),
  and let $\Gamma$ be the labelled digraph on $\Omega$ without arcs where
  $\labelFunc(1)=\exLabel{black}$, $\labelFunc(2)=\exLabel{grey}$, and the
  remaining vertices have label $\exLabel{white}$. Finally, define $\stackS =
  [\Gamma]$ and $\stackT = [\Gamma^{(1\,3\,5)(2\,4\,6)}]$.

  Suppose that we are
  searching for an intersection $D$ of subsets of $\Sym{\Omega}$, one
  of which is $G$, and suppose that $\Iso{\stackS}{\stackT}$
  overestimates the solution.
  We wish to give a refiner $(f,f)$ for $G$, where the function $f$
  takes into account that the elements of $D$ respect the
  orbit structure of $G$.

  Since $D \subseteq \Iso{\stackS}{\stackT}$,
  elements of $D$ induce isomorphisms from $\stackS$ to $\stackT$.
  In particular, if $\fixedFunc$ is a fixed-point approximator
  (see Definition~\ref{defn-approx-fixed}),
  then each element of $D$ maps the list $\Fixed{\stackS}$ to the
  list $\Fixed{\stackT}$.
  For illustration, assume that
  $\Fixed{\stackS} = [1,2]$ and $\Fixed{\stackT} = [3,4]$,
  and let $G_{[1,2]} = \< (3\,4) (5\,6) \>$ and $G_{[3,4]} = \< (1\,2) (5\,6) \>$
  denote the stabilisers of $[1,2]$ and $[3,4]$ in $G$, respectively.
  Therefore, since each element $x \in D$ maps $[1,2]$ to $[3,4]$
  and is contained in $G$, it follows that
  $x \in G_{[1,2]} \cdot h$,
  where $h$ is any permutation in $G$ that maps
  $[1,2]$ to $[3,4]$, for instance $h\coloneqq(1\,3\,5)(2\,4\,6) \in G$.
  This means that we can define $f(\stackS)$ and $f(\stackT)$ in
  terms of the orbits of $G_{[1,2]}$ and $G_{[3,4]}$.

  One option is to define $f(\stackS) = [\Gamma_{\mathcal{U}}]$
  as in Example~\ref{ex-set-of-sets}, for the set of orbits 
  $\mathcal{U} \coloneqq \{\{1\}, \{2\}, \{3,4\}, \{5,6\}\}$
  of $G_{[1,2]}$ on $\Omega$,
  and to define $f(\stackT) = [\Gamma_{\mathcal{V}}]$
  similarly for the set $\mathcal{V} \coloneqq \{\{1,2\},
  \{3\}, \{4\}, \{5,6\}\}$ of orbits of $G_{[3,4]}$ on $\Omega$.
  This is valid, but not ideal,
  since a permutation could map $[\Gamma_{\mathcal{U}}]$ to
  $[\Gamma_{\mathcal{V}}]$
  while mapping an orbit in
  $\mathcal{U}$ to any orbit of the same size in $\mathcal{V}$.
  However, every element of $G_{[1,2]} \cdot h$ 
  maps the orbit $O \in \mathcal{U}$ to $O^{h}$.
  Therefore, this refiner
  does not eliminate some elements that,
  to us, are obviously not in $D$.

  This is unsatisfactory, and so we would like to define $f(\stackS) =
  f_{\mathcal{U}}(\stackS)$ as in Example~\ref{ex-perfect-list-of-sets} for
  some \emph{ordered} list $\mathcal{U}$ of the orbits of $G_{[1,2]}$.
  But then how should we order the orbits
  of $G_{[3,4]}$ in the corresponding way, to obtain a stack $f(\stackT)$
  such that $D \subseteq  \Iso{f(\stackS)}{f(\stackT)}$?
\end{example}

To address the problem discussed in Example~\ref{ex-refiner-fixed-stack},
we use a technique similar to that of Leon~\cite{leon1991}.
In essence, we specify a refiner iteratively during search,
when applying it to a new version of the left-hand stack.
We can make choices as we do so
(about the ordering of orbits, for example).
Then, for all right-hand stacks that we encounter,
we consult the choice made for the current left-hand stack,
and remain consistent with that.

In more detail, we create such a refiner $(f, f)$ for a subgroup $G$ of
$\Sym{\Omega}$ as follows.
Let $\fixedFunc$ be a fixed point approximator, and initially
let $\stackV_{i}$ and $F_{i}$ be empty lists for all $i \in \N_{0}$.
We describe how to apply $(f, f)$
to stacks $(\stackS, \stackT)$ of length $i \coloneqq |\stackS| = |\stackT|$.

If $\stackV_{i}$ is still empty,
then we redefine $F_{i}$ to be $\Fixed{\stackS}$ and $\stackV_{i}$
to be a non-empty labelled digraph stack on $\Omega$ whose automorphism group
contains $G_{F_{i}}$,
the stabiliser of $F_{i}$ in $G$.
For example, $\stackV_{i}$ could be a list of orbital graphs of
$G_{F_{i}}$ on $\Omega$, represented as labelled digraphs,
or it could be the length-one stack $[\Gamma_{\mathcal{U}}]$ from
Example~\ref{ex-perfect-list-of-sets}, for some arbitrarily-ordered list
$\mathcal{U}$ of the orbits of $G_{F_{i}}$ on $\Omega$.

If $\stackV_{i}$ is no longer empty, then we have already applied the refiner
to stacks of length $i$.
Since a search has at most one left-hand stack of any particular length,
this means that we have already seen the left-hand stack
$\stackS$, and defined $F_{i}$ and $\stackV_{i}$ in terms of it.

The refiner gives $f(\stackS) = V_{i}$
and either $f(\stackT) = V_{i}^{a}$ (if ${\Fixed{\stackS}}^{a} = \Fixed{\stackT}$, for some $a \in G$) or $f(\stackT) = \EmptyStack{\Omega}$.
Note that, by Definition~\ref{defn-approx-fixed}\ref{item-fixed-invariant}, if no such element $a$ exists,
then $\stackS$ and  $\stackT$ are not isomorphic via $G$,
and so there are no solutions in the current branch.
Therefore the algorithm should backtrack.

The mathematical foundation of this kind of refiner is given in
Lemma~\ref{lem-fixed-stack}.
The notation in this lemma corresponds to the notation of the preceding
paragraphs.

We may use this lemma with Lemma~\ref{lem-refiner-right-coset}
to give refiners for cosets of subgroups.

\begin{lemma}\label{lem-fixed-stack}
  Let $G \leq \Sym{\Omega}$ and let $\fixedFunc$ be a fixed-point approximator.
  For all $i \in \N_{0}$, let $\stackV_{i} \in \Stacks{\Omega}$ be a labelled
  digraph stack on $\Omega$, and let $F_{i}$ be a list of points in $\Omega$
  whose stabiliser in $G$
  is a subgroup of $\Auto{\stackV_{i}}$.

  We define a function $f$ from $\Stacks{\Omega}$ to itself as follows.  For
  each $\stackS \in \Stacks{\Omega}$, let
  \[
    f(\stackS) =
    \begin{cases}
      {(\stackV_{|\stackS|})}^{a} &
        \text{if}\
        {F_{|\stackS|}}^{a} = \Fixed{\stackS}
        \ \text{for some}\
        a \in G,\\
      \EmptyStack{\Omega} &
        \text{if no such element}\ a \in G\ \text{exists}.
    \end{cases}
  \]
  Then $(f, f)$ is a refiner for $G$.
\end{lemma}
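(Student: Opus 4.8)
The plan is to invoke Lemma~\ref{lem-simple-refiner}: since $G$ is a group and hence contains $\idOmega$, it suffices to show that $f$ is equivariant on $G$, that is, that $f(\stackS^{g}) = {f(\stackS)}^{g}$ for all $g \in G$ and $\stackS \in \Stacks{\Omega}$. Before verifying this, I would first check that $f$ is well-defined, since this is the one place where the hypothesis on the stabilisers is used. The value $f(\stackS)$ is determined by a choice of element $a \in G$ with $F_{|\stackS|}^{a} = \Fixed{\stackS}$, so suppose $a, a' \in G$ both satisfy this. Then $a{a'}^{-1}$ fixes $F_{|\stackS|}$ and so lies in the stabiliser of $F_{|\stackS|}$ in $G$, which by assumption is contained in $\Auto{\stackV_{|\stackS|}}$. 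Hence ${(\stackV_{|\stackS|})}^{a{a'}^{-1}} = \stackV_{|\stackS|}$, giving ${(\stackV_{|\stackS|})}^{a} = {(\stackV_{|\stackS|})}^{a'}$, so the value of $f$ does not depend on the choice.

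For the equivariance, I would fix $g \in G$ and $\stackS \in \Stacks{\Omega}$, and set $i := |\stackS| = |\stackS^{g}|$, using that the action preserves length. The key input is Definition~\ref{defn-approx-fixed}\ref{item-fixed-invariant}, which gives $\Fixed{\stackS^{g}} = {\Fixed{\stackS}}^{g}$. The argument then splits into cases according to the definition of $f$. If there is some $a \in G$ with $F_{i}^{a} = \Fixed{\stackS}$, then $f(\stackS) = {(\stackV_{i})}^{a}$, and applying $g$ gives $F_{i}^{ag} = {\Fixed{\stackS}}^{g} = \Fixed{\stackS^{g}}$ with $ag \in G$; hence $f(\stackS^{g}) = {(\stackV_{i})}^{ag} = {({(\stackV_{i})}^{a})}^{g} = {f(\stackS)}^{g}$. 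If instead no such $a$ exists, then $f(\stackS) = \EmptyStack{\Omega}$, and I would show that likewise no $a' \in G$ satisfies $F_{i}^{a'} = \Fixed{\stackS^{g}}$: any such $a'$ would give $F_{i}^{a'g^{-1}} = {\Fixed{\stackS^{g}}}^{g^{-1}} = \Fixed{\stackS}$ with $a'g^{-1} \in G$, contradicting the case assumption. Thus $f(\stackS^{g}) = \EmptyStack{\Omega} = {\EmptyStack{\Omega}}^{g} = {f(\stackS)}^{g}$, since the empty stack is fixed by every permutation.

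Combining the two cases gives $f(\stackS^{g}) = {f(\stackS)}^{g}$ for all $g \in G$ and $\stackS \in \Stacks{\Omega}$, and Lemma~\ref{lem-simple-refiner} then yields that $(f, f)$ is a refiner for $G$. I expect the main subtlety to be the well-definedness argument, since it is the only step that genuinely uses the hypothesis that the stabiliser of $F_{i}$ in $G$ lies in $\Auto{\stackV_{i}}$; once $f$ is known to be a genuine function, the equivariance is a short computation driven entirely by the invariance property of the fixed-point approximator.
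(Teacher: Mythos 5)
Your proposal is correct and takes essentially the same approach as the paper's proof: both first establish that $f$ is well-defined using the hypothesis that the stabiliser of $F_{|\stackS|}$ in $G$ lies in $\Auto{\stackV_{|\stackS|}}$, then verify $f(\stackS^{g}) = {f(\stackS)}^{g}$ via Definition~\ref{defn-approx-fixed}\ref{item-fixed-invariant}, and conclude with Lemma~\ref{lem-simple-refiner}. The only cosmetic difference is that in the non-empty case you take $ag$ itself as the witness defining $f(\stackS^{g})$ and appeal to well-definedness, whereas the paper picks an arbitrary witness $b$ and applies the stabiliser hypothesis a second time to show ${(\stackV_{|\stackS|})}^{ag} = {(\stackV_{|\stackS|})}^{b}$; the two arguments are equivalent.
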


\begin{proof}
  Note that $f$ is well-defined: if $\stackS \in \Stacks{\Omega}$ and
  $a, b \in G$ both map $F_{|\stackS|}$ to $\Fixed{\stackS}$, then
  $a b^{-1}$ stabilises $F_{|\stackS|}$, and so $a b^{-1} \in
  \Auto{\stackV_{|\stackS|}}$ by assumption; therefore
  ${(\stackV_{|\stackS|})}^{a} =
  {(\stackV_{|\stackS|})}^{b}$.

  Let $\stackS \in \Stacks{\Omega}$ and $g \in G$.
  By Lemma~\ref{lem-simple-refiner},
  it suffices to show that $f(\stackS^{g}) = {f(\stackS)}^{g}$.
  There exists an element $a \in G$ mapping $F_{|S|}$ to
  $\Fixed{\stackS}$
  if and only if there exists
  an element of $b \in G$ mapping $F_{|\stackS^{g}|} = F_{|\stackS|}$ to
  $\Fixed{\stackS}$, since $g$ maps
  $\Fixed{\stackS}$ to $\Fixed{\stackS^{g}}$ by
  Definition~\ref{defn-approx-fixed}\ref{item-fixed-invariant}.
  In that case that no such $a$ and $b$ exist, then
  $f(\stackS^{g}) = \EmptyStack{\Omega} = {f(\stackS)}^{g}$.
  Otherwise
  $f(\stackS) = {(\stackV_{|\stackS|})}^{a}$
  and $f(\stackS^{g}) = {(\stackV_{|\stackS|})}^{b}$.
  In this case, $a g b^{-1}$ fixed $F_{|\stackS|}$ pointwise,
  and so $a g b^{-1} \in \Auto{\stackV_{|\stackS|}}$ by assumption.
  Therefore
  \[
    {f(\stackS)}^{g}
    =
    {(\stackV_{|\stackS|})}^{a g}
    =
    {(\stackV_{|\stackS|})}^{b}
    =
    {(\stackV_{|\stackS^{g}|})}^{b}
    =
    {f(\stackS^{g})}.
    \qedhere
  \]
\end{proof}

\section{Experiments}\label{sec-experiments}

In this section, we provide experimental data comparing the behaviour of our
algorithms against partition backtrack, in order to
highlight the potential of our techniques.
We repeat the experiments of~\cite[Section~6]{newrefiners}, which demonstrated improvements
from using orbital graphs, and we also investigate some additional challenging problems.
In many cases we observe a significant advancement with our new techniques.
We decided not to investigate classes of problems where partition
backtrack already performs very well, or problems where we would expect all
techniques (including ours) to perform badly. Instead we have chosen problems
that are interesting and important in their own right, including ones
that we expect to be hard for many search techniques.

At the time of writing, we have focused on the mathematical theory of our
algorithms, but not on the speed of our implementations.
Therefore, 
we only analyse the size of the search required by an algorithm to solve a problem, and not the time required.
We define a \emph{search node} of a search to be an instance of the main
searching procedure being called recursively during its execution;
the \emph{size of a search} is then its number of search nodes.
If an algorithm requires zero search nodes to solve a problem, then
it solves the problem without entering recursion,
which in our situation implies that the problem has either no solutions, or
exactly one.

The size of a search should depend only on the mathematical foundation of the
algorithm, rather than on the proficiency of the programmer who implements it,
and so it allows a fair basis for comparisons.
That said, we expect that where our algorithms require significantly smaller
searches,
then with an optimised implementation,
the increased time spent at each node
will be out-weighed by the smaller number of nodes in total,
giving faster searches than partition backtrack.
This is because, in general, a backtrack search algorithm spends time at each
search node to prune the search tree and organise the search.
The computations at each node of our algorithms are largely
digraph-based, and the very high performance of digraph-based computer programs
such as \bliss~\cite{bliss} and \nauty~\cite{practical2} suggests that, in
practice, these kinds of computations should be cheap.

For the problems that we investigate in
Sections~\ref{sec-grid-groups}--\ref{sec-intransitive-intersection}, we compare
the following techniques:
\begin{enumerate}[label=\textrm{(\roman*)}]
  \item\textsc{Leon}:
    Standard partition backtrack search,
    as described by Jeffrey Leon~\cite{leon1997, leon1991}.

  \item\textsc{Orbital}:
    Partition backtrack search with orbital graph refiners,
    as in~\cite{newrefiners}.

  \item\textsc{Strong}:
    Backtrack search with labelled digraphs,
    using the
    isomorphism and fixed-point approximators from
    Definition~\ref{defn-strong-approx}
    and the splitter from Definition~\ref{defn-point-splitter}.

  \item\textsc{Full}:
    Backtrack search with labelled digraphs,
    using the isomorphism and fixed-point approximators
    from Definition~\ref{defn-nauty-approx}
    and the splitter from Definition~\ref{defn-point-splitter}.
\end{enumerate}

The \textsc{Leon} technique is roughly the same as backtrack search with
labelled digraphs, where the digraphs are not allowed to
have arcs.
The \textsc{Orbital} technique is roughly the same as backtrack search with
labelled digraphs using the `weak equitable approximation' isomorphism and
fixed-point approximators from Definition~\ref{defn-weak-approx}.

The
\textsc{Strong} technique considers all labelled digraphs in the stack simultaneously to make its approximations, while the
\textsc{Full} technique computes isomorphisms and fixed points exactly,
rather than approximating them, and so in principle it is the most expensive of
the four methods.

We require refiners for groups given by generators, for cosets of such groups, for set stabilisers, and for unordered partition stabilisers.
We describe the refiners for set and unordered partition stabilisers in
Section~\ref{sec-grid-groups}.
For \textsc{Leon} we use the group and coset refiner described
in~\cite{leon1997}.
For \textsc{Orbital} we use the \textbf{DeepOrbital} group and coset refiner from~\cite{newrefiners},
although we get similar results for all the refiners described in that paper.
For the \textsc{Strong} and \textsc{Full} techniques,
we use refiners of the kind described in Section~\ref{sec-refiners-via-stack}
for groups and cosets, using orbits and orbital graphs.
These algorithms are similar to \textbf{DeepOrbital}, except they return the
created digraphs instead of filtering them internally.
For \textsc{Strong} and \textsc{Full}, we use the splitter given in
Definition~\ref{defn-point-splitter}.

We performed our experiments using the \textsc{GraphBacktracking}~\cite{GAPpkg}
and \textsc{BacktrackKit}~\cite{BTpkg} packages for \textsc{GAP}~\cite{GAP4}.
\textsc{BacktrackKit} provides a simple implementation of the algorithms in
\cite{newrefiners,leon1997,leon1991}, and provides a base for
\textsc{GraphBacktracking}. We note that where we reproduce experiments from
\cite{newrefiners}, we produce the same sized searches.

\subsection{Set stabilisers and partition stabilisers in grid
groups}\label{sec-grid-groups}

We first explore the behaviour of the four techniques on stabiliser problems
in grid
groups. This setting was previously considered
in~\cite[Section~6.1]{newrefiners}, and as mentioned there, these kinds of
problems arise in numerous real-world situations.

\begin{definition}[\mbox{Grid
  group~\cite[Definition~36]{newrefiners}}]\label{def-gridgroup}
  Let \(n \in \N\) and \(\Omega = \{1, \ldots, n\}\).  The direct product
  \(\Sym{\Omega} \times \Sym{\Omega}\) acts faithfully on the Cartesian product
  \(\Omega \times \Omega\) via
  \({(\alpha, \beta)}^{(g, h)} = (\alpha^{g}, \beta^{h})\)
  for all \(\alpha, \beta \in \Omega\) and \(g, h \in \Sym{\Omega}\).
  The \emph{\(n \times n\) grid group} is the image of the embedding of
  \(\Sym{\Omega} \times \Sym{\Omega}\) into \(\Sym{\Omega \times \Omega}\)
  defined by this action.
\end{definition}

Let \(n \in \N\) and \(\Omega = \n\), and let \(G \leq \Sym{\Omega
\times \Omega}\) be the \(n \times n\) grid group.  If we consider \(\Omega
\times \Omega\) to be an \(n \times n\) grid, where the sets of the form
\(\set{(\alpha, \beta)}{\beta \in \Omega}\) and \(\set{(\beta, \alpha)}{\beta
\in \Omega}\) for each \(\alpha \in \Omega\) are the rows and columns of the
grid, respectively, then \(G\) is the subgroup of \(\Sym{\Omega \times \Omega}\)
that preserves the set of rows and the set of columns.

We repeat the experiments in~\cite{newrefiners},
and add an unordered partition stabiliser problem:

\begin{enumerate}[label=\textrm{(\roman*)}]
  \item\label{exp-set-stab-grid-1}
    Compute the stabiliser in $G$ of a subset of \(\Omega \times \Omega\) of
    size \(\lfloor n^{2} / {2} \rfloor\).

  \item\label{exp-set-stab-grid-2}
    Compute the stabiliser in $G$ of a subset of \(\Omega \times \Omega\) with
    \(\lfloor n / {2} \rfloor\) entries in each grid-row.

  \item\label{exp-part-stab-grid-1}
    If \(2 \mathop{\vert} n\), then compute the stabiliser in $G$ of an unordered
    partition of \(\Omega \times \Omega\) that has two cells, each of size
    \(n^{2} / 2\).

\end{enumerate}

As in~\cite[Section~6.1]{newrefiners}, we compute with the \(n \times n\) grid
group as a subgroup of \(\Sn{n^{2}}\), and the
algorithms have no prior knowledge of the grid structure that
the group preserves.

For \textsc{Leon} and \textsc{Orbital}, we refine for a set stabiliser as in~\cite{leon1997}.
For \textsc{Strong} and \textsc{Full}, our refiner for set stabiliser
is the one from Example~\ref{ex-perfect-list-of-sets}.
The stabiliser in $\Sn{{n}^2}$ of an unordered partition with two parts of size \({n^{2}}/2\)
is a subgroup isomorphic to the wreath product \(\Sn{n^2/2} \wr \Sn{2}\).
For each technique, for an unordered partition stabiliser,
we directly use the group refiner for this subgroup.

Tables~\ref{tab-exp-set-stab-grid}
and~\ref{tab-exp-part-stab-grid} show the results concerning the search size
required to solve 50 random problems each of
types~\ref{exp-set-stab-grid-1},~\ref{exp-set-stab-grid-2},
and~\ref{exp-part-stab-grid-1}.
An entry in the `Zero\%' column shows the percentage of problems that
an algorithm solved with a search of size zero.
These columns are omitted when they are all-zero.

\begin{table}[!ht]
  \centering
  \small
  \begin{tabular}{r r r r r r r r r r r r}
    \hline
                      &&&& \multicolumn{2}{c}{\textsc{Orbital},}
                     &&&&& \multicolumn{2}{c}{\textsc{Orbital},} \\
     && {\textsc{Leon}} && \multicolumn{2}{c}{\textsc{Strong}, \textsc{Full}}
    &&& {\textsc{Leon}} && \multicolumn{2}{c}{\textsc{Strong}, \textsc{Full}}
    \\
    \cline{3-3}
    \cline{5-6}
    \cline{9-9}
    \cline{11-12}
    \(n\) &&
    {\scriptsize Median} && {\scriptsize Median} & {\scriptsize Zero\%} &&&
    {\scriptsize Median} && {\scriptsize Median} & {\scriptsize Zero\%}
    \\
    \hline
     3 &&    4   && 2 &  22   &&&     7   && 2 &   0 \\
     4 &&    8   && 0 &  50   &&&     8   && 2 &   0 \\
     5 &&   16   && 2 &  44   &&&    13   && 2 &   0 \\
     6 &&   23   && 0 &  68   &&&    34   && 2 &  20 \\
     7 &&   34   && 0 &  74   &&&    41   && 0 &  54 \\
     8 &&   46   && 0 &  90   &&&    92   && 0 &  68 \\
     9 &&   58   && 0 &  92   &&&   108   && 0 &  54 \\
    10 &&   75   && 0 &  88   &&&   290   && 0 &  86 \\
    11 &&  107   && 0 &  94   &&&   262   && 0 &  90 \\
    12 &&  124   && 0 & 100   &&&  1085   && 0 &  92 \\
    13 &&  155   && 0 & 100   &&&   788   && 0 &  98 \\
    14 &&  185   && 0 &  96   &&& 21774   && 0 &  96 \\
    15 &&  216   && 0 &  98   &&&  2471   && 0 & 100 \\
    \cline{1-1}
    \cline{3-6}
    \cline{9-12}
     && \multicolumn{4}{c}{Problem~\ref{exp-set-stab-grid-1}}
    &&& \multicolumn{4}{c}{Problem~\ref{exp-set-stab-grid-2}}
    \\
    \hline
  \end{tabular}
  \caption{
    Search sizes for 50 instances of
    Problems~\ref{exp-set-stab-grid-1} and~\ref{exp-set-stab-grid-2}
    in the $n \times n$ grid group.
  }\label{tab-exp-set-stab-grid}
\end{table}

\begin{table}[!ht]
  \centering
  \small
  \begin{tabular}{r r r r r r r r}
    \hline
    && \multicolumn{1}{c}{\textsc{Leon}}
    && \multicolumn{1}{c}{\textsc{Orbital}}
    && \multicolumn{2}{c}{\textsc{Strong}, \textsc{Full}}
    \\
    \cline{3-3}
    \cline{5-5}
    \cline{7-8}
    \(n\)
    && {\scriptsize Median} && {\scriptsize Median}
    && {\scriptsize Median} & {\scriptsize Zero\%}
    \\
    \hline
    4   &&  16   &&  16   && 5 &  24 \\
    6   &&  44   &&  36   && 0 &  66 \\
    8   &&  82   &&  64   && 0 &  82 \\
    10  && 129   && 100   && 0 &  88 \\
    12  && 206   && 144   && 0 &  96 \\
    14  && 317   && 196   && 0 & 100 \\
    16  && 504   && 256   && 0 & 100 \\
    18  && 664   && 324   && 0 &  98 \\
    \hline
  \end{tabular}
  \caption{
    Search sizes for 50 instances of
    Problem~\ref{exp-part-stab-grid-1}
    in the $n \times n$ grid group.
  }\label{tab-exp-part-stab-grid}
\end{table}

In~\cite[Section~6.1]{newrefiners}, the \textsc{Orbital} algorithm was much
faster than the classical \textsc{Leon} algorithm at solving
problems of types~\ref{exp-set-stab-grid-1} and~\ref{exp-set-stab-grid-2}.  In
Table~\ref{tab-exp-set-stab-grid}, we see why: \textsc{Orbital} typically
requires no search for these problems. \textsc{Leon} used a total of 65,834
nodes to solve all problems in Problem~\ref{exp-set-stab-grid-1}, and 37,882,616
nodes for Problem~\ref{exp-set-stab-grid-2}, while \textsc{Orbital} required 567
for Problem~\ref{exp-set-stab-grid-1} and 1073 for
Problem~\ref{exp-set-stab-grid-2}.  The same numbers of nodes were also required
for both \textsc{Strong} and \textsc{Full}, since there is no possible
improvement.

In Table~\ref{tab-exp-part-stab-grid} for Problem~\ref{exp-part-stab-grid-1}, however, we clearly see the
benefits of our new techniques. Partition backtrack~--~\textsc{Leon} and \textsc{Orbital}~--~takes an
increasing number of search nodes, with 140,177 nodes required for \textsc{Leon}
and 57,120 nodes for \textsc{Orbital} to solve all instances.  But \textsc{Strong} 
is powerful enough in almost all cases to solve these same problems
without search, requiring only 450 nodes to solve all problem instances.

\subsection{Intersections of primitive groups with symmetric wreath
products}\label{sec-primitive-intersection}

Next, as in~\cite[Section~6.2]{newrefiners}, we consider intersections of
primitive groups with wreath products of symmetric groups.  
To construct these problems, we
use the primitive groups library, which is included in the
\textsc{PrimGrp}~\cite{primgrp} package for \textsc{GAP}.

For a given composite \(n \in \{6,\ldots,80\}\), we create the
following problems: for each primitive subgroup \(G \leq \Sn{n}\) that is
neither \(\Sn{n}\) nor the natural alternating subgroup of \(\Sn{n}\), and for
each proper divisor $d$ of $n$, we construct the wreath product
$\Sn{n/d} \wr \Sn{d}$ as a subgroup of
\(\Sn{n}\), which we then conjugate by a randomly chosen element of
\(\Sn{n}\).  Finally, we use each algorithm in turn to compute the
intersection of \(G\) with the conjugated wreath product.
We create 50 such intersection problems for each $n$, $G$, and $d$.

For each \(k \in \{6,\ldots,80\}\), we record the cumulative number of search
nodes that each technique needs to solve all of the intersection problems for
all composite \(n \in \{6,\ldots,k\}\).
We show these cumulative totals in Figures~\ref{fig-prim1trans}
and~\ref{fig-prim2trans},
separating the groups that are 2-transitive from those that are primitive but not 2-transitive, as in~\cite[Section~6.2]{newrefiners}.
Note that the number of problems increases with the numbers of divisors of \(n\)
and primitive groups of degree \(n\);
this explains the step-like structure in these figures.

\begin{figure}[!ht]
  \centering
  \includegraphics[width=0.65\textwidth,keepaspectratio,clip=true]{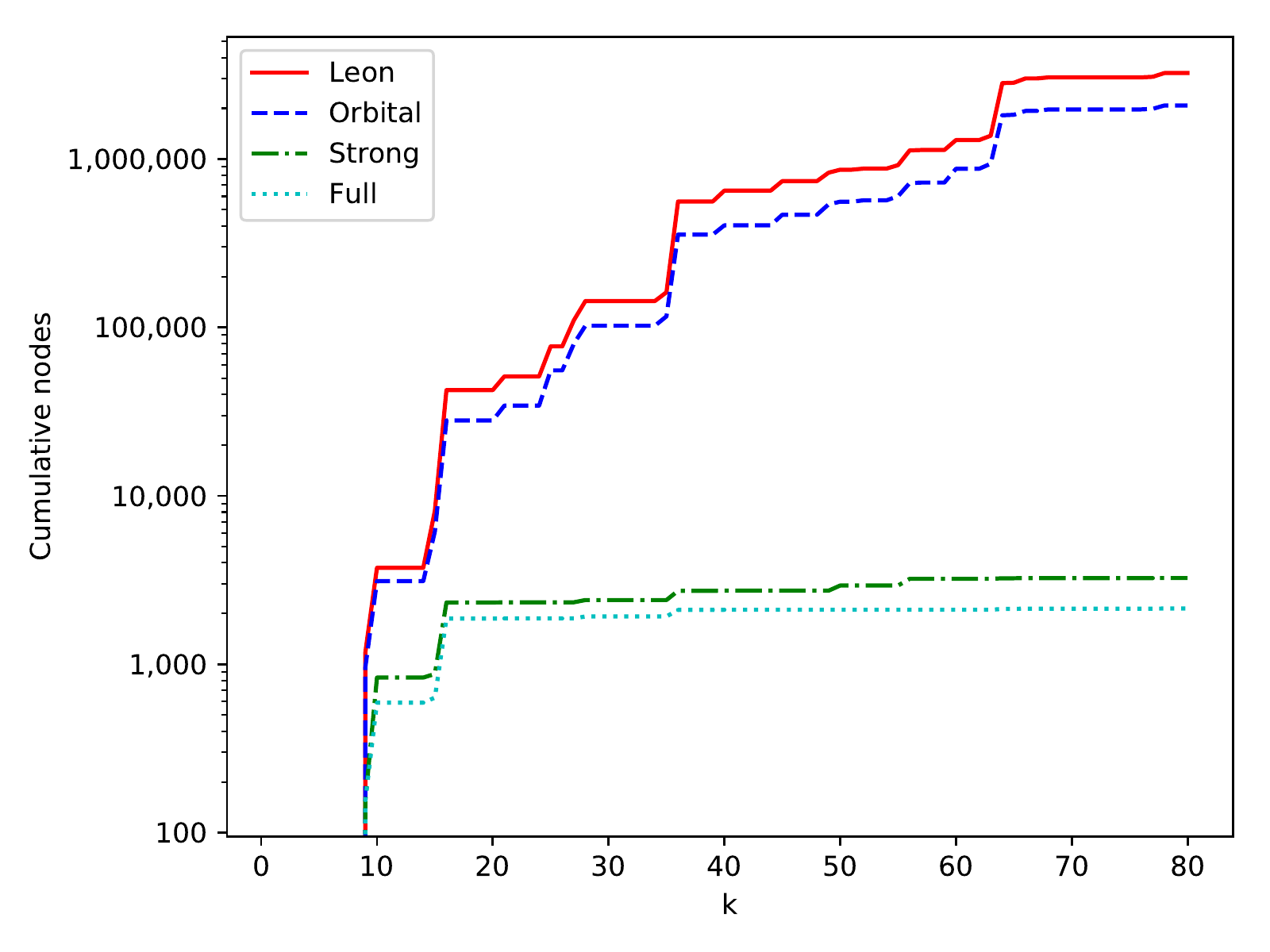}
  \caption{
    Cumulative nodes required to intersect primitive (but
    not 2-transitive) groups with wreath products of symmetric groups, for all
    problems with \(n \in \{6, \ldots, k\}\).
  }\label{fig-prim1trans}
\end{figure}

For the primitive but not 2-transitive groups, the total number of search nodes
required by the \textsc{Leon} algorithm is
3,239,403.
The \textsc{Orbital} algorithm reduces this total search size to
2,079,356,
and the cumulative search sizes for \textsc{Strong} (with 3,248 nodes) and \textsc{Full} (with 2,140 nodes) are even smaller.

This huge reduction happens because the \textsc{Strong} and
\textsc{Full} algorithms solve almost every problem without search.  Out of
40,150 experiments, the \textsc{Strong} algorithm required search for only 703,
and the \textsc{Full} algorithm required search for just 654. On the other hand,
the \textsc{Leon} and \textsc{Orbital} algorithms required search for every
problem.

\begin{figure}[!ht]
  \centering
  \includegraphics[width=0.65\textwidth,keepaspectratio,clip=true]{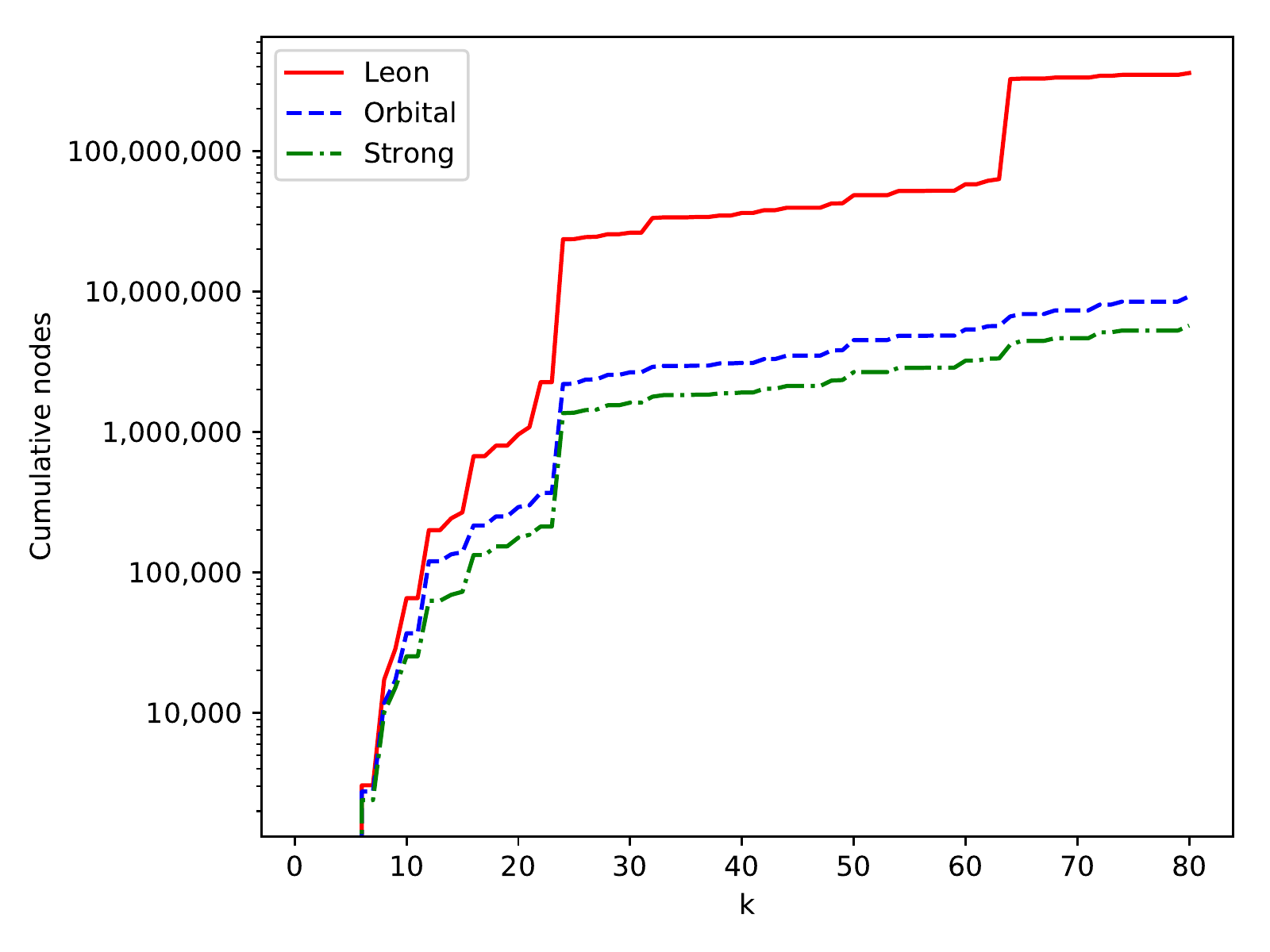}
  \caption{
    Cumulative nodes required to intersect 2-transitive groups
    with wreath products of symmetric groups, for all problems
    with \(n \in \{6, \ldots, k\}\).
    The line for \textsc{Full} is omitted, since at this scale, it is
    indistinguishable from the line for \textsc{Strong}.
  }\label{fig-prim2trans}
\end{figure}

For the intersection problems involving groups that
are at least 2-transitive, the improvement of the new techniques over the
partition backtrack algorithms is much smaller, and all of the algorithms
required a non-zero search size to solve every problem.  This was to be expected: a 2-transitive 
group has a unique orbital graph, which is a complete digraph.

\subsection{Intersections of cosets of intransitive
groups}\label{sec-intransitive-intersection}

In this section, we go beyond the experiments of~\cite[Section~6]{newrefiners}, with a problem 
that we expect to be difficult for all search techniques:
intersecting
cosets of intransitive groups that have identical orbits, and where all orbits
have the same size.

More precisely, we intersect right cosets of subdirect products of
transitive groups of equal degree.  
Given \(k, n \in \N\),
we randomly choose \(k\) transitive subgroups of \(\Sn{n}\) from the transitive
groups library \textsc{TransGrp}~\cite{transgrp}, each of which we conjugate by a
random element of \(\Sn{n}\), and we create their direct product, $G$, which we
regard as a subgroup of \(\Sn{k n}\).
Then, we randomly sample elements of \(G\) until the subgroup that they generate
is a subdirect product of \(G\).  If this subdirect product is equal to \(G\),
then we abandon the process and start again. Otherwise, the result is a
generating set for what we call a \emph{proper $(k,n)$-subdirect product}.

In our experiments, for various \(k,n \in \N\), we explore the search space
required to determine whether or not the intersections of pairs of right cosets of
different \((k,n)\)-subdirect products are empty.
To make the problems as hard as possible, we choose coset
representatives that preserve the orbit structure of the
\((k,n)\)-subdirect product.

We performed 50 random instances for each pair $(k,n)$, for all
$k, n \in \{2,\ldots,10\}$, and
we show a representative sample of this data in
Tables~\ref{tab-exp-subdirect-n} and~\ref{tab-exp-subdirect}
and Figure~\ref{fig-77nodes}.
Table~\ref{tab-exp-subdirect-n} shows results for each $n$ for all $k$ combined,
and Table~\ref{tab-exp-subdirect} gives a more in-depth view for two values of $k$.
The tables omit data for the \textsc{Full} algorithm, because it was mostly
identical to the data for the \textsc{Strong} algorithm.

\begin{table}[!ht]
  \small
  \centering
  \begin{tabular}{r r r r r r r r r r r r r r}
    &&\multicolumn{2}{c}{\textsc{Leon}}
     &&\multicolumn{3}{c}{\textsc{Orbital}}
     &&\multicolumn{3}{c}{\textsc{Strong}}\\
    \cline{3-4} \cline{6-8} \cline{10-12}
    \(n\)
    && {\scriptsize Mean} & {\scriptsize Median}
    && {\scriptsize Mean} & {\scriptsize Median} & {\scriptsize Zero\%}
    && {\scriptsize Mean} & {\scriptsize Median} & {\scriptsize Zero\%} \\
    \hline
    2   &&     3  &  2  &&     2& 2  &14  &&    2 & 2 & 14 \\
    3   &&  1418  &  7  &&    19& 0  &58  &&   19 & 0 & 59 \\
    4   &&  1250  & 12  &&    71& 0  &69  &&   62 & 0 & 70 \\
    5   &&  37924 & 30  && 15576& 10 &14  && 8803 & 0 & 54 \\
    6   &&   584  & 12  &&   254& 6  &36  &&  139 & 0 & 86 \\
    7   && 53612  & 28  && 43555& 14 &0   && 8982 & 0 & 70 \\
    8   &&  1142  &  8  &&   997& 8  &15  &&    4 & 0 & 98 \\
    9   &&  6547  &  9  &&  5562& 9  &2   &&    7 & 0 & 95 \\
    10  &&  8350  & 10  &&  6959& 10 &1   &&    7 & 0 & 97 \\
    \hline
\end{tabular}
  \caption{
    Search sizes for $(k,n)$-subdirect product
    coset intersection problems, where for each $n$,
    we ran 50 experiments for each $k \in \{2,\dots,10\}$.
  }\label{tab-exp-subdirect-n}
\end{table}

\begin{table}[!ht]
  \small
  \centering
  \begin{tabular}{r r r r r r r r r r r r r r}
    &&&\multicolumn{2}{c}{\textsc{Leon}}
     &&\multicolumn{3}{c}{\textsc{Orbital}}
     &&\multicolumn{3}{c}{\textsc{Strong}}\\
    \cline{4-5} \cline{7-9} \cline{11-13}
    \(k\) & \(n\)
    && {\scriptsize Mean} & {\scriptsize Median}
    && {\scriptsize Mean} & {\scriptsize Median} & {\scriptsize Zero\%}
    && {\scriptsize Mean} & {\scriptsize Median} & {\scriptsize Zero\%} \\
    \hline
    4  & 5  && 13683  & 30  && 6356  & 11 & 8  && 6176  & 5 & 40   \\
    4  & 6  && 376    & 18  && 335   & 6  & 8  && 87    & 0 & 76   \\
    4  & 7  && 8612   & 49  && 7065  & 43 & 0  && 6494  & 0 & 54   \\
    4  & 8  && 1133   & 8   && 365   & 8  & 14 && 0     & 0 & 100  \\
    4  & 9  && 1947   & 9   && 621   & 9  & 0  && 0     & 0 & 96   \\
    4  & 10 && 458    & 10  && 410   & 10 & 2  && 0     & 0 & 98   \\
    \hline
    8  & 5  && 119561 & 130 && 42885 & 30 & 17 && 36888 & 0 & 58   \\
    8  & 6  && 70     & 12  && 25    & 0  & 56 && 67    & 0 & 98   \\
    8  & 7  && 19731  & 49  && 11154 & 43 & 0  && 167   & 0 & 86   \\
    8  & 8  && 209    & 8   && 58    & 8  & 12 && 0     & 0 & 100  \\
    8  & 9  && 152    & 9   && 144   & 9  & 2  && 0     & 0 & 100  \\
    8  & 10 && 138    & 10  && 64    & 10 & 2  && 0     & 0 & 100  \\
    \hline
  \end{tabular}
  \caption{
    Search sizes for 50 $(k,n)$-subdirect product
    coset intersection problems.
  }\label{tab-exp-subdirect}
\end{table}

The \textsc{Strong} algorithm solved a large proportion
of problems with zero search.
As $n$ and $k$ increase, we find that
\textsc{Strong} is also able to solve almost all problems without search,
and the remaining problems with very little search. The only problems
where \textsc{Strong} does not perform significantly better are those
involving orbits of size 2 ($n = 2$).
This is not surprising as there are very few possible
orbital graphs for such groups.
We note that the problems with $n = 5$ and $7$ seem particularly difficult.
This is because transitive groups of prime degree are primitive,
and sometimes even 2-transitive,
in which case they do not have useful orbital graphs.

On the other hand, \textsc{Orbital} solved a lot fewer problems without
search, and \textsc{Leon} solved none in this way.
Although the relatively low medians show that all of the algorithms performed
quite small searches for many of the problems, we see a much starker
difference in the mean search sizes.  These means are typically dominated
by a few problems; see Figure~\ref{fig-77nodes}.

\begin{figure}[!ht]
  \centering
  \includegraphics[width=0.65\textwidth,keepaspectratio,clip=true]{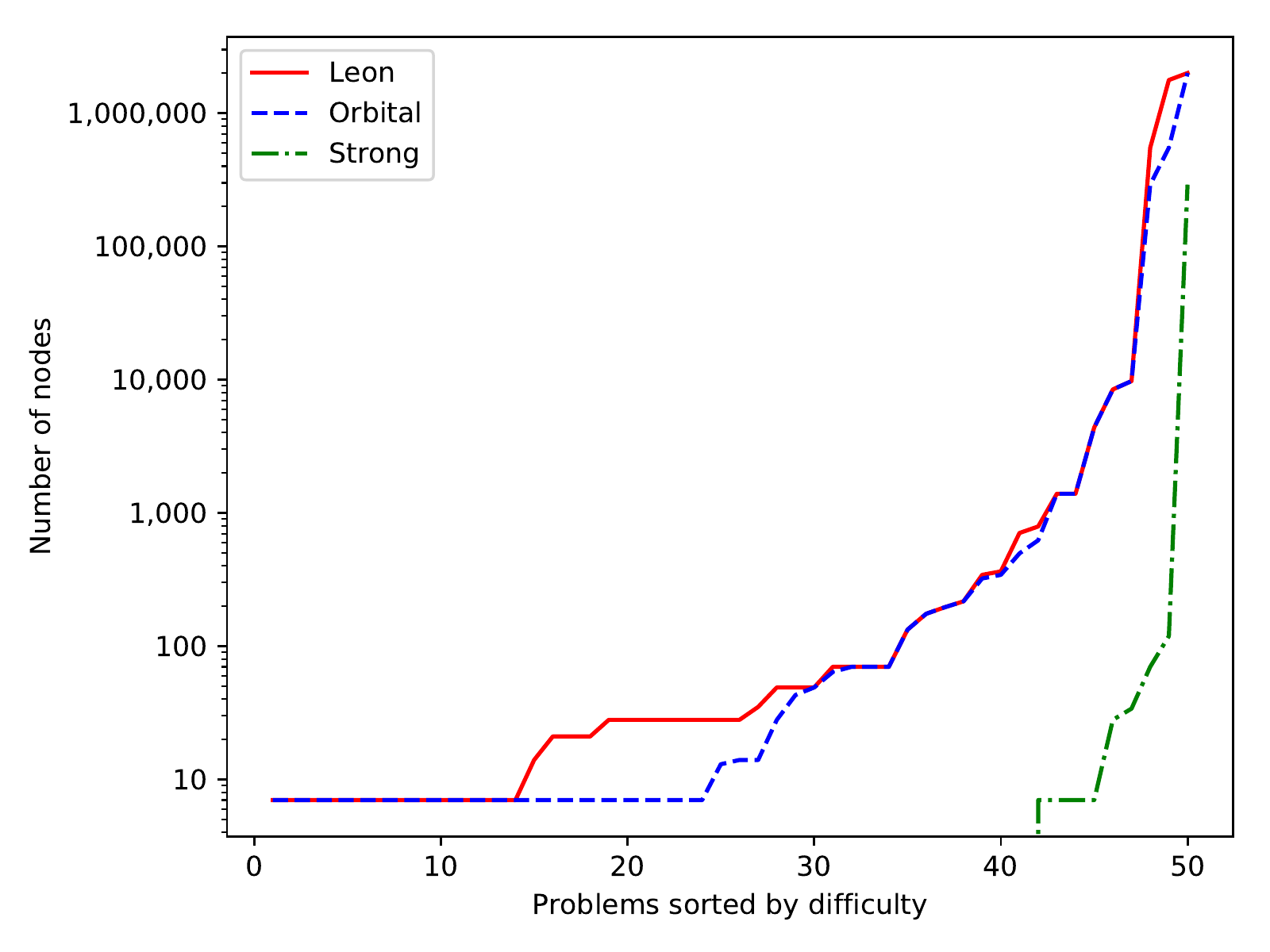}
  \caption{
    Search sizes for 50 (7,7)-subdirect product coset intersection
    problem instances.
    The data for \textsc{Full} is almost identical to the data for
    \textsc{Strong}, and is omitted.
  }\label{fig-77nodes}
\end{figure}

To give a more complete picture of how the algorithms perform,
Figure~\ref{fig-77nodes} shows the search sizes for all 50 intersections problem
that we considered for $n = k = 7$, sorted by difficulty. The data that we
collected in this case was fairly typical.
Figure~\ref{fig-77nodes} shows that
\textsc{Strong} solves almost all problems with very little or no search, and it
only requires more than 50 search nodes for the three hardest problems. On the
other hand, \textsc{Leon} and \textsc{Orbital} need more than 50 nodes for the
18 hardest problems.
All algorithms found around 30\% of the (randomly generated) problems easy to solve.

\section{Conclusions and directions for further work}\label{sec-end}

We have discussed a new search technique for
a large range of group and coset problems in
$\Sym{\Omega}$, building on the partition backtrack framework of
Leon~\cite{leon1997,leon1991}, but using stacks of labelled digraphs instead of ordered partitions. 

Our new algorithms often reduce problems that previously involved searches of
hundreds of thousands of nodes into problems that require no search, and can
instead be solved by applying strong equitable approximation to a pair of stacks.
There already exists a significant body of work on efficiently implementing
equitable partitioning and automorphism finding on
digraphs~\cite{bliss,practical2}, which we believe can be generalised to work
incrementally with labelled digraph stacks that grow in length.
We did not yet concern ourselves
with the time complexity or speed of our algorithms in this paper,
nor did we discuss their implementation details. However, we do intend for
our algorithms to be practical, and we expect that with sufficient further
development into their implementations, our algorithms should perform
competitively against, and even beat, partition backtrack for many classes of
problems. 
This requires optimising the implementation
of the algorithms that we have presented here.
In future work, we plan to show how the
algorithms described in this paper can be implemented efficiently, and compare
the speed of various methods for hard search problems.  In particular, we aim
for a better understanding of when partition backtrack is already the best
method available, and when it is worth using our methods. Further, earlier work
which used orbital graphs~\cite{newrefiners} showed that there are often
significant practical benefits to using only some of the possible orbital graphs
in a problem, rather than all of them. We will investigate whether a similar
effect occurs in our methods.

Another direction of research is the development and analysis of new types of
refiners, along with an extension of our methods. 
For example, we have seen some refiners in our examples that perfectly capture all the 
information about the set that we search for, and it is worth investigating this more.
See~\cite[Section~5.1]{directorscut} for first steps in this direction.
We could also allow
more substantial changes to the digraphs, such as adding new vertices outside of
$\Omega$. One obvious major area not addressed in this paper is normaliser and
group conjugacy problems. These problems, as well as a concept for the quality of refiners 
are addressed in ongoing work that builds on the present paper.

While the step from ordered partitions to labelled digraphs already adds some
difficulty,
we still think that it is worth considering even more intricate structures.
Why not generalise our ideas to stacks of more general combinatorial structures
defined on a set $\Omega$?  The definitions of a splitter, of an isomorphism
approximator, and of a refiner were essentially independent of the notion of a
labelled digraph, and so they~--~and therefore the algorithms~--~could work for
more general objects around which a search method could be organised.

\end{document}